\newtheorem{proposition}{\textbf{Proposition}}
\newtheorem{lemma}[proposition]{\textbf{Lemma}}
\newtheorem{corollary}[proposition]{\textbf{Corollary}}
\newtheorem{theorem}[proposition]{\textbf{Theorem}}
\theoremstyle{definition}
\newtheorem{definition}[proposition]{\textbf{Definition}}
\newtheorem{example}[proposition]{\textbf{Example}}
\newtheorem{remark}[proposition]{\textbf{Remark}}
\newtheorem{acknowledgements}{\textbf{Acknowledgements}}
\newcommand{\note}[1]{\marginpar{\raggedright\if@twoside\ifodd\c@page\raggedleft\fi\fi\sf\scriptsize \red{RMK: #1}}}
\newcommand{\lie}[1]{\operatorname{\mathfrak{#1}}}
\newcommand{\su}{\lie{su}}
\newcommand{\un}{\lie u}
\newcommand{\hf}{\lie h}
\newcommand{\kf}{\lie k}
\newcommand{\gf}{\lie g}
\newcommand{\mf}{\lie m}
\newcommand{\p}{\lie p}
\newcommand\C{{\mathbb C}}
\newcommand{\PP}{{\mathbb P}}
\newcommand{\R}{{\mathbb R}}
\newcommand{\B}{{\mathbb B}}
\newcommand{\sM}{{\mathcal{M}}}   
\newcommand{\cx}{{\mathbb C}}
\newcommand{\ad}{\operatorname{ad}}
\newcommand{\Ad}{\operatorname{Ad}}
\newcommand{\tr}{\operatorname{tr}}
\newcommand{\Jac}{\operatorname{Jac}}
\newcommand{\Mat}{\operatorname{Mat}}
\newcommand{\ol}{\overline}
\newcommand{\oZ}{{\mathbb{Z}}}
\newcommand{\oP}{{\mathbb{P}}}
\newcommand{\oR}{{\mathbb{R}}}
\numberwithin{equation}{section}
\numberwithin{proposition}{section}
\begin{document}
\title[Nahm--Schmid equations]{The Nahm--Schmid equations \\and Hypersymplectic Geometry}

\author{Roger Bielawski, Nuno M. Rom\~ao, Markus R\"oser}
\address{Institut f\"ur Differentialgeometrie,
Leibniz Universit\"at Hannover,
Welfengarten 1, 30167 Hannover, Germany}
\email{bielawski@math.uni-hannover.de \\ roeser@math.uni-hannover.de}
\address{Institut f\"ur Mathematik, Universit\"at Augsburg, 86135 Augsburg, Germany}
\email{Nuno.Romao@math.uni-augsburg.de}

\begin{abstract} 
We explore the geometry of the Nahm--Schmid equations, a version of Nahm's equations in split signature. Our discussion ties up different aspects of their integrable nature: dimensional reduction
from the Yang--Mills anti-self-duality equations, explicit solutions, Lax-pair formulation, conservation laws and spectral curves, as well as their relation to hypersymplectic geometry.
\end{abstract}

\maketitle

\tableofcontents
\section{Introduction}
Hypersymplectic geometry~\cite{Hitchin:1990,DanSwa} can informally be thought of as a pseudo-Riemannian analogue of  hyperk\"ahler geometry, in which the role of the quaternions $\mathbb H$ is played instead by the algebra $\B$  of the \emph{split quaternions}. The generators of $\B$ (over $\mathbb{R}$) are denoted ${\rm i,s,t}$ and satisfy the relations
$${\rm i}^2 = -1, \quad {\rm s}^2 = 1 = {\rm t}^2, \quad {\rm is} = {\rm t} = -{\rm si}.$$
As an $\mathbb R$-algebra, $\B$ is isomorphic to the associative algebra ${\rm Mat}_2(\R)$ of real $2\times 2$ matrices, which is the split real form of the complex algebra ${\rm Mat}_2(\C)$. 
A hypersymplectic manifold is then  a manifold  with an action of $\B$ on its tangent bundle which is compatible with a given pseudo-Riemannian metric of split signature;
its dimension is necessarily a multiple of four.

\begin{definition}
A \emph{hypersymplectic manifold} is a quintuple $(M^{4k},g,I,S,T)$ where $g$ is a pseudo-Riemannian metric on $M$ of signature $(2k,2k)$ and $I,S,T\in \Gamma(\mathrm{End}({\rm T}M))$ are parallel  skew-adjoint endomorphisms satisfying the relations 
$$I^2 = - 1, \quad S^2 = 1 = T^2,\quad IS = T =-SI.$$
\end{definition}

The parallel endomorphism $S$ (and similarly $T$) splits ${\rm T}M$ into its $(\pm1)$-eigen\-bundles, which are of equal rank and integrable in the sense of Frobenius. A hypersymplectic manifold $M$ therefore locally splits (essentially, in a canonical way) as a product $M_+\times M_-$, where the two factors have the same dimension $2k$. For this reason, the endomorphism $S$ is also called \emph{a local product structure} in the literature.

Given a hypersymplectic manifold, we obtain a triple of symplectic forms
\begin{equation} \label{tripleomega}
\omega_I = g(I\cdot, \cdot),\; \omega_S = g(S\cdot, \cdot),\; \omega_T=g(T\cdot, \cdot)
\end{equation}
in analogy to the hyperk\"ahler situation.
The holonomy group of a hypersymplectic manifold is the real symplectic group ${\rm Sp}(2k,\oR)$, which in particular implies that hypersymplectic manifolds are Ricci-flat. Hypersymplectic and hyperk\"ahler structures  have the same complexification, so it is not surprising that many results and constructions from hyperk\"ahler geometry have hypersymplectic analogues.
For example, one can show just like in the hyperk\"ahler case that the endomorphisms $I,S,T$ are parallel and integrable if and only if the associated 2-forms $\omega_I,\omega_S,\omega_T$ are all closed.  
In fact, the hypersymplectic structure, i.e. the quadruple $(g,I,S,T)$, is completely determined by the corresponding triple of symplectic forms (\ref{tripleomega}). For instance, we have $S=\omega_T^{-1}\circ\omega_I$, if we interpret the $\omega_i$'s as isomorphisms $\omega_i: {\rm T}M\to {\rm T}^\ast M$.

There is also a hypersymplectic quotient construction analogous to (but more pathological than) the hyperk\"ahler quotient of \cite{HKLR:1987}. Quotients of this sort constitute a basic source of hypersymplectic structures, and they may occur in dimensional reduction from split signature in various contexts in mathematical physics --- see for example~\cite{Hull} for an application to the study of sigma-models with extended supersymmetry. 
The basic result is the following: 

\begin{proposition}[\cite{Hitchin:1990}] Let $(M,g,I,S,T)$ be a hypersymplectic manifold with an action of a Lie group $G$ which is Hamiltonian with respect to each symplectic form $\omega_i, i\in \{I,S,T\}$, with hypersymplectic moment map 
$$\mu=(\mu_I,\mu_S,\mu_T): M\to \gf^\ast \otimes\R^3.$$
Assume that the $G$-action is free and proper on $\mu^{-1}(0)$, that $0$ is a regular value of $\mu$, and that the metric restricted to the $G$-orbits in $\mu^{-1}(0)$ is non-degenerate. Then the quotient 
$$\mu^{-1}(0)/G$$
carries in a natural way a hypersymplectic structure. 
\end{proposition}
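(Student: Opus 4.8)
The plan is to mimic the hyperk\"ahler quotient construction of \cite{HKLR:1987}, paying special attention to the places where the split signature could make the relevant orthogonal decompositions degenerate; these are exactly the points controlled by the non-degeneracy hypothesis. Write $N=\mu^{-1}(0)$, let $\iota\colon N\hookrightarrow M$ be the inclusion, and let $\pi\colon N\to N/G=:M'$ be the quotient projection. Since $0$ is a regular value, $N$ is a smooth submanifold of codimension $3\dim G$; since $G$ acts freely and properly on $N$, the quotient $M'$ is a smooth manifold of dimension $\dim M-4\dim G$ and $\pi$ is a principal $G$-bundle. For $p\in N$ write $\gf_p=\{X^\ast_p : X\in\gf\}=\mathrm{T}_p(G\cdot p)$; freeness makes $X\mapsto X^\ast_p$ an isomorphism $\gf\to\gf_p$.

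First I would pin down the tangent spaces. The moment-map equations $d\mu_I^X=\iota_{X^\ast}\omega_I=g(IX^\ast,\cdot)$ (and likewise for $S,T$) identify
\[
\mathrm{T}_pN=\Ker d\mu_p=\bigl(I\gf_p\oplus S\gf_p\oplus T\gf_p\bigr)^{\perp_g}.
\]
The central computation is to show that on $N$ the four subspaces $\gf_p,I\gf_p,S\gf_p,T\gf_p$ are mutually $g$-orthogonal. Each cross pairing reduces, via skew-adjointness and the algebra relations, to some $\omega_J(X^\ast,Y^\ast)$ with $J\in\{I,S,T\}$; by equivariance this equals $Y^\ast(\mu_J^X)=\mu_J^{[Y,X]}$, which vanishes on $N$ because $\mu\equiv 0$ there. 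For instance $g(IX^\ast,SY^\ast)=-g(X^\ast,TY^\ast)=\omega_T(X^\ast,Y^\ast)=0$. Likewise $g(IX^\ast,IY^\ast)=g(X^\ast,Y^\ast)$ and $g(SX^\ast,SY^\ast)=-g(X^\ast,Y^\ast)=g(TX^\ast,TY^\ast)$, so non-degeneracy of $g$ on each of $I\gf_p,S\gf_p,T\gf_p$ is equivalent to non-degeneracy on $\gf_p$. This is exactly where the hypothesis that $g$ is non-degenerate on the orbits is used, and it is the crux of the proof: it guarantees that $V_p:=\gf_p\oplus I\gf_p\oplus S\gf_p\oplus T\gf_p$ is a non-degenerate subspace, whence the horizontal space $H_p:=V_p^{\perp_g}$ gives a splitting $\mathrm{T}_pM=V_p\oplus H_p$ with $g|_{H_p}$ non-degenerate. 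In the Riemannian (hyperk\"ahler) case this is automatic; here it is the one genuinely delicate step, reflecting the more pathological nature of the hypersymplectic quotient.

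Next I would transport the structure to $M'$. The relations $I^2=-1$, $S^2=T^2=1$, $IS=T=-SI$ show $V_p$ is $I,S,T$-invariant, hence so is $H_p$ by skew-adjointness, and the identification $\mathrm{T}_{[p]}M'\cong \mathrm{T}_pN/\gf_p\cong H_p$ is $G$-equivariant. As $g,I,S,T$ are all $G$-invariant, they descend to a pseudo-Riemannian metric $g'$ and skew-adjoint endomorphisms $I',S',T'$ on $M'$ obeying the same algebraic relations; that $g'$ has signature $(2k',2k')$ follows because $I'$ is an isometric complex structure while $S'$ is an anti-isometry, forcing the two eigenbundles of $S'$ to have equal rank.

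Finally, to conclude that $(M',g',I',S',T')$ is hypersymplectic I would invoke the criterion recalled above, so that it suffices to show the three $2$-forms $\omega_{I'},\omega_{S'},\omega_{T'}$ are closed. Here I claim each $\iota^\ast\omega_I$ is basic: it is $G$-invariant, and it is horizontal because $\iota_{X^\ast}\iota^\ast\omega_I=\iota^\ast d\mu_I^X=d(\mu_I^X|_N)=0$. Hence it descends to a unique form $\omega_{I'}$ on $M'$ with $\pi^\ast\omega_{I'}=\iota^\ast\omega_I$, and $d\omega_{I'}=0$ follows from $\pi^\ast d\omega_{I'}=\iota^\ast d\omega_I=0$ and injectivity of $\pi^\ast$. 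A pointwise check under $\mathrm{T}_{[p]}M'\cong H_p$ identifies this descended form with $g'(I'\cdot,\cdot)$, and the argument applies verbatim to $S$ and $T$. Thus $\omega_{I'},\omega_{S'},\omega_{T'}$ are closed, the criterion yields that $I',S',T'$ are parallel, and $\mu^{-1}(0)/G$ carries the asserted hypersymplectic structure. I expect the only real obstacle to be the non-degeneracy step of the second paragraph; once that is in place, the descent argument is formally identical to the hyperk\"ahler case.
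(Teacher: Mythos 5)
Your proof is correct, and it is precisely the standard Marsden--Weinstein/hyperk\"ahler quotient argument adapted to split signature: the paper itself offers no proof of this proposition (it is quoted from Hitchin's paper and the text merely remarks that the argument runs ``just like in the proof of the hyperk\"ahler version''), and your write-up supplies exactly that argument, correctly isolating the one genuinely new point --- that non-degeneracy of $g$ on $\gf_p$ propagates to the full space $\gf_p\oplus I\gf_p\oplus S\gf_p\oplus T\gf_p$ via the isometry/anti-isometry relations and the vanishing of the cross-pairings $\omega_J(X^\ast,Y^\ast)$ on $\mu^{-1}(0)$. No gaps.
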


In many situations, one can obtain a smooth quotient manifold $\mu^{-1}(0) /G$. However,  this will typically carry a hypersymplectic structure only on the complement of a \emph{degeneracy locus}. On the other hand, if $G$ acts freely and properly on $\mu^{-1}(0)$, then the non-degeneracy assumption implies the smoothness of the quotient $\mu^{-1}(0)/G$ just like in the proof of the hyperk\"ahler version of the construction. 

It is our aim in this paper to apply the hypersymplectic quotient construction in an infinite-dimensional setting to obtain a hypersymplectic structure on a suitable open subset of the product manifold $G\times \gf^3$. Our approach is closely analogous to Kronheimer's construction of a hyperk\"ahler metric on ${\rm T}^\ast G^\C$ in \cite{Kronheimer:1988}. Building in part on work by Matsoukas \cite{Matsoukas:2010}, we interpret $G\times \gf^3$ as a moduli space of solutions to the {\em Nahm--Schmid equations} (see Section~\ref{NahmSchmidEq} below), a ``hypersymplectic version'' of Nahm's equations~\cite{Nahm} which can be viewed as a dimensional reduction of the anti-self-dual Yang-Mills equations in split signature. To our best knowledge, these equations first arose in the work of Schmid on deformations of complex manifolds \cite{Schm}, which predates the advent of Nahm's equations in gauge theory~\cite{Nahm,AtiyahHitchin:1988}.

The article is organised as follows. 
In Section 2 we introduce the Nahm--Schmid equations as a dimensional reduction of the anti-self-dual Yang-Mills equations in split signature, and derive some immediate properties of their solutions with values in a compact Lie algebra $\gf$. The heart of the article is Section 3, where we examine the hypersymplectic geometry of the (framed) moduli space of solutions over the unit interval. We describe the corresponding
degeneracy locus and investigate the complex and product structures of our moduli space. The results that we obtain are akin to those in \cite{Roeser:2014}, where the moduli space of solutions to the gauge-theoretic harmonic map equations was studied; note that these equations also arise by dimensional reduction of the anti-self-dual Yang Mills equations in split signature.  Then in Section 4 we consider the Nahm--Schmid equations from the point of view of integrable systems  \cite{AdlvMoe1,AdlvMoe2}. We discuss the relevant twistor space (as a particular case of a more general construction by Bailey and Eastwood~\cite{BE}), as well as a Lax pair formulation of 
 the equations, in particular the associated {\em spectral curve}. Finally, in Section 5 we discuss other moduli spaces of solutions to the Nahm--Schmid equations. We show that the moduli spaces of solutions on a half-line (with the limit satisfying a certain stability condition) is a hypersymplectic analogue of the hyperk\"ahler metrics on adjoint orbits in $\gf^\cx$. We also briefly discuss hypersymplectic quotients of the moduli space of solutions over the unit interval by subgroups of $G\times G$.
 
\begin{acknowledgements} 
Parts of this work are contained in the third author's DPhil thesis \cite{RoeserDPhil} and he wishes to thank his supervisor, Prof.\ Andrew Dancer, for his guidance and support. The second author would like to thank Paul Norbury for early discussions related to material contained in this paper, and FCT for
financial support under the grant BPD/14544/03.
\end{acknowledgements} 
 
\section{The Nahm--Schmid equations}\label{NahmSchmidEq}

We use $\R^{2,2}$ to denote $\R^4$ equipped with the pseudo-Riemannian metric $\mathrm dx_0^2+\mathrm dx_1^2-\mathrm dx_2^2-\mathrm dx_3^2$. Let $G$ be a compact Lie group with Lie algebra $\gf$, on which we choose a $\mathrm{Ad}$-invariant inner product $\langle \cdot,\cdot\rangle$. 

Solutions to the Nahm--Schmid equations will correspond to $G$-connections  
$$\nabla = \mathrm d + \sum_{i=0}^3 T_i\, \mathrm dx_i$$
on $\mathbb R^{2,2}$ that are anti-self-dual (with respect to the standard orientation given by $\mathrm dx_0\wedge\mathrm dx_1\wedge\mathrm dx_2\wedge\mathrm dx_3$)
and also invariant under translation in the variables $x_1, x_2, x_3$. That is, the connection matrices $T_i$ depend only on the (real) coordinate $x_0$. We recall that the anti-self-duality equations on $\mathbb R^{2,2}$  are 
\begin{eqnarray*}
\ [ \nabla_0, \nabla_1] &=& -[\nabla_2, \nabla_3], \ \\
\ [ \nabla_0,\nabla_2] &=& -[\nabla_1, \nabla_3],\ \\
\ [\nabla_0, \nabla_3] &=& [\nabla_1, \nabla_2].\   
\end{eqnarray*}
Since we assume that the $T_i$ only depend on $x_0$ (which we shall also denote by $t$), we have for the covariant partial derivatives in these equations
$$\nabla_0 = \frac{\mathrm d}{\mathrm dt} + T_0 \qquad \text{and} \qquad \nabla_i = T_i,\  \ i = 1,2,3.$$
Thus, we arrive at the following definition.
\begin{definition}\label{Def:NahmSchmidEq}
Let $\gf$ be the Lie algebra of a compact Lie group.  A quadruple of $\mathfrak g$-valued functions $(T_0,T_1,T_2,T_3)$, $T_i: \R \to \gf$  satisfies the  \emph{Nahm--Schmid equations} if 
\begin{eqnarray} \label{NahmSchmidUNred}
\dot T_1 + [T_0,T_1] &=& -[T_2,T_3],\nonumber \\
\dot T_2 + [T_0,T_2] &=& [T_3,T_1], \\ 
\dot T_3 + [T_0,T_3] &=& [T_1,T_2]. \nonumber
\end{eqnarray}
\end{definition}

The solutions to the Nahm--Schmid equations are invariant under gauge transformations, i.e. functions $u:\R \rightarrow G$ acting by 
\begin{equation}\label{gaugeaction} u.(T_0,T_1,T_2,T_3) := (uT_0u^{-1}-\dot uu^{-1}, uT_1u^{-1}, uT_2u^{-1}, uT_3u^{-1}).\end{equation}
Note that we can always find a gauge transformation that solves the ODE  $$uT_0u^{-1}-\dot uu^{-1} = 0.$$
This allows us to transform any solution $(T_0,T_1,T_2,T_3)$ of (\ref{NahmSchmidUNred}) into a solution with $T_0=0$. For such a solution $\mathcal T = (0, T_1, T_2, T_3)$, the triple $(T_1,T_2,T_3)$ satisfies the \emph{reduced Nahm--Schmid equations}
\begin{eqnarray} \label{NahmSchmidred}
\dot T_1 &=& -[T_2,T_3],  \nonumber \\
\dot T_2  &=& [T_3,T_1],  \\ 
\dot T_3  &=& [T_1,T_2] . \nonumber
\end{eqnarray}

An important property of this system of equations is the following.

\begin{proposition}\label{Conserved} Let $(T_0,T_1,T_2,T_3)$ be a solution to the Nahm--Schmid equations. Then the following gauge-invariant quantities are conserved:
$$\|T_1\|^2 + \|T_i\|^2,  \quad i=2,3 \qquad \text{ and }\qquad \langle T_i,T_j\rangle, \quad i\neq j, \;\;  i,j\geq 1.$$
\end{proposition}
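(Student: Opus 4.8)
The plan is to exploit the $\mathrm{Ad}$-invariance of $\langle\cdot,\cdot\rangle$ twice over: once to recognise that the listed quantities are gauge-invariant, and once to supply the algebraic identity that forces them to be conserved. Since $\langle uXu^{-1},uYu^{-1}\rangle=\langle X,Y\rangle$ and the gauge action (\ref{gaugeaction}) acts on $T_1,T_2,T_3$ by conjugation, every expression built out of the inner products $\langle T_i,T_j\rangle$ with $i,j\geq 1$ is manifestly invariant under (\ref{gaugeaction}). Because both the value of such a quantity at a fixed time and its property of being $t$-independent are gauge-invariant, it suffices to verify conservation for one representative per gauge orbit; I would therefore pass to the gauge $T_0=0$ and work with the reduced equations (\ref{NahmSchmidred}). (Alternatively one may differentiate directly using the unreduced system (\ref{NahmSchmidUNred}); the extra terms involving $T_0$ drop out by exactly the mechanism described next, since each appears as $\langle T_i,[T_0,T_i]\rangle$ or as a symmetric pair $\langle T_i,[T_0,T_j]\rangle+\langle T_j,[T_0,T_i]\rangle$.)

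The key algebraic input is that the trilinear form $(X,Y,Z)\mapsto\langle [X,Y],Z\rangle$ is totally antisymmetric. Antisymmetry in the first two slots is immediate from $[X,Y]=-[Y,X]$, while $\mathrm{Ad}$-invariance gives $\langle [X,Y],Z\rangle=-\langle Y,[X,Z]\rangle=-\langle [X,Z],Y\rangle$, which is antisymmetry in the last two slots; together these make the form alternating. In particular it vanishes whenever two of its arguments coincide, and it is invariant under cyclic permutations of $X,Y,Z$.

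With this in hand the computation is routine. For the norm combinations, set $c:=\langle T_1,[T_2,T_3]\rangle$; cyclic invariance gives $\langle T_2,[T_3,T_1]\rangle=\langle T_3,[T_1,T_2]\rangle=c$, so differentiating with (\ref{NahmSchmidred}) yields $\frac{d}{dt}\|T_1\|^2=-2c$ and $\frac{d}{dt}\|T_2\|^2=\frac{d}{dt}\|T_3\|^2=2c$, whence $\|T_1\|^2+\|T_2\|^2$ and $\|T_1\|^2+\|T_3\|^2$ are constant. For the off-diagonal terms, each derivative $\frac{d}{dt}\langle T_i,T_j\rangle$ expands via (\ref{NahmSchmidred}) into a sum of two expressions of the form $\langle [\cdot,\cdot],\cdot\rangle$ in which one argument is repeated, so both vanish by the alternating property. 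This presents no real obstacle: the only point demanding care is the logical justification of the gauge reduction, namely that conservation in the gauge $T_0=0$ implies conservation in an arbitrary gauge, which rests precisely on the $\mathrm{Ad}$-invariance of $\langle\cdot,\cdot\rangle$ noted at the outset.
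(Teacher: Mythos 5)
Your proof is correct and follows exactly the route the paper intends: the paper's own proof is the one-line remark that the claim ``follows from a direct computation, using the equations and the invariance of the inner product,'' which is precisely the computation you carry out via the total antisymmetry of $\langle [X,Y],Z\rangle$. The gauge-reduction remark and the observation that the $T_0$-terms cancel pairwise are both accurate, so no changes are needed.
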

\begin{proof}
This follows from a direct computation, using the equations and the invariance of the inner product.
\end{proof}
We shall give a natural interpretation of this statement in Section \ref{Lax}.
An immediate consequence, which is in contrast to the behaviour of solutions of the usual Nahm equations~\cite{Nahm}, is a global existence result for solutions that can be brought to the reduced form $\mathcal T$ above (see also \cite{Matsoukas:2010}):

\begin{corollary}\label{GlobalReg}
Any solution of the reduced Nahm--Schmid equations \ref{NahmSchmidred} exists for all time.
\end{corollary}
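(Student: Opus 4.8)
The plan is to combine the elementary local existence theory for ordinary differential equations with the conservation laws of Proposition~\ref{Conserved} to rule out finite-time blow-up. The right-hand side of the reduced equations~(\ref{NahmSchmidred}) is a quadratic---hence smooth---map $\gf^3\to\gf^3$, so the Picard--Lindel\"of theorem guarantees that through every initial value there passes a unique maximal solution, defined on some open interval $(t_-,t_+)$. The standard extension criterion then states that if $t_+<\infty$ the solution must eventually leave every compact subset of $\gf^3$; accordingly it suffices to produce an a priori bound on $\|(T_1,T_2,T_3)\|$ valid on the whole interval of existence.

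The decisive observation is that the conserved quantities of Proposition~\ref{Conserved} already furnish such a bound. Writing $c_2=\|T_1\|^2+\|T_2\|^2$ and $c_3=\|T_1\|^2+\|T_3\|^2$, both are nonnegative constants, being sums of squared norms for the (positive-definite) $\mathrm{Ad}$-invariant inner product on the compact Lie algebra $\gf$. Since each summand is itself nonnegative, one immediately gets $\|T_1\|^2\leq\min(c_2,c_3)$, and then $\|T_2\|^2=c_2-\|T_1\|^2\leq c_2$ together with $\|T_3\|^2=c_3-\|T_1\|^2\leq c_3$. Thus each $\|T_i(t)\|$ is bounded, uniformly in $t$, by a constant depending only on the initial data.

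Consequently the solution curve remains in a fixed closed ball of $\gf^3$ throughout its interval of existence; as $\gf$ is finite-dimensional this ball is compact, contradicting the extension criterion unless $t_+=+\infty$ and $t_-=-\infty$. Hence every maximal solution is global. I expect the only point requiring care to be the sign bookkeeping underlying Proposition~\ref{Conserved}: it is precisely the split-signature sign pattern in~(\ref{NahmSchmidred}) that renders the \emph{sums} $\|T_1\|^2+\|T_2\|^2$ and $\|T_1\|^2+\|T_3\|^2$ conserved, and these sums of nonnegative terms are what control the individual norms. For the ordinary Nahm equations the analogous conserved combinations are instead \emph{differences} of squared norms, which do not bound the individual terms---which is exactly why those solutions can blow up in finite time.
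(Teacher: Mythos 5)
Your argument is correct and is essentially the paper's own proof: both derive a uniform bound on $\|T_i(t)\|$ from the conserved quantities of Proposition~\ref{Conserved} (the paper sums them into $C=2\|T_1\|^2+\|T_2\|^2+\|T_3\|^2$, you keep the two sums separate, which makes no difference) and then invoke the standard escape-from-compact-sets criterion for maximal solutions of ODEs. No gaps.
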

\begin{proof}
Proposition~\ref{Conserved} shows that we have a conserved quantity 
$$C = 2\|T_1\|^2 + \|T_2\|^2 + \|T_3\|^2;$$
note that for any $t\in\R$ one has
$$\|T_i(t)\|^2 \leq C.$$
Thus the solution is uniformly bounded. The assertion now follows from the fact that, if a solution to an ODE only exists for finite time, it has to leave every compact set. 
\end{proof}

\begin{remark}
Put on $\gf\times\gf\times\gf$ the indefinite metric coming from the identification with $\gf\otimes \R^{1,2}$ and consider the function 
$$\phi: \gf\times\gf\times\gf\to \R, \qquad \phi(\xi_1,\xi_2,\xi_3) := \langle [\xi_1,\xi_2],\xi_3\rangle.$$
The right-hand side of the reduced Nahm--Schmid equations (\ref{NahmSchmidred}) are then the negative of the gradient of $\phi$, with respect to this metric. We know from Proposition \ref{GlobalReg} that solutions to this gradient flow are bounded, and so exist for all times. It follows moreover that the non-trivial trajectories of this flow are contained in the compact submanifolds
$$M_C = \{(\xi_1,\xi_2,\xi_3) \;|\;  \, 2\|\xi_1\|^2 + \|\xi_2\|^2 + \|\xi_3\|^2 = C\}\subset \gf^3,$$
for suitable $C>0$.
\end{remark}

\section{The moduli space over $[0,1]$}

In this section, we study solutions of the Nahm--Schmid equations over a compact interval $U\subset \R$, which we fix to be $U=[0,1]$ for convenience. Again, $G$ denotes any compact Lie group and $\mathfrak g$ stands for its Lie algebra.

\subsection{The moduli space as a manifold\label{2}}
We denote by $\mathcal A$ the space of all quadruples of $C^1$-functions on $U$ with values in $\gf$, i.e. the (affine) Banach manifold
$$\mathcal A = \{(T_0,T_1,T_2,T_3): U\to \gf\otimes \R^4\ | \ \text{$T_i$ differentiable of class $C^1$}\} \cong C^1(U,\gf)\otimes \R^4$$
equipped with the norm
$$\|T\|_{C^1} := \sum_{i=0}^3 \|T_i\|_{C^1},$$
where we use the usual $C^1$-norm $\|T_i\|_{C^1} := \|T_i\|_{\sup}+ \|\dot T_i\|_{\sup}$ on each component.

\begin{proposition}\label{NSsmooth}
The set  of  solutions to the Nahm--Schmid equations is a smooth Banach submanifold of $\mathcal A$. 
\end{proposition}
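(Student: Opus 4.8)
The plan is to exhibit the solution set as the zero locus of a smooth map between Banach spaces and to verify that $0$ is a regular value in the strong (split) sense required by the implicit function theorem for Banach manifolds. First I would define
$$F\colon \mathcal{A} \longrightarrow C^0(U,\gf)\otimes\R^3$$
sending a quadruple $T=(T_0,T_1,T_2,T_3)$ to the triple of differences of the two sides of (\ref{NahmSchmidUNred}),
$$F(T) = \bigl(\dot T_1 + [T_0,T_1]+[T_2,T_3],\ \dot T_2+[T_0,T_2]-[T_3,T_1],\ \dot T_3+[T_0,T_3]-[T_1,T_2]\bigr).$$
Each component is the sum of the bounded linear operator $\tfrac{\mathrm d}{\mathrm dt}\colon C^1(U,\gf)\to C^0(U,\gf)$ and bounded bilinear bracket terms $C^1\times C^1\to C^0$, so $F$ is a degree-two polynomial map and in particular smooth. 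By construction the set of solutions to the Nahm--Schmid equations is exactly $F^{-1}(0)$.

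Next I would compute the derivative $\mathrm dF_T$ at a solution $T$ and show it is onto. It suffices to restrict to variations with vanishing zeroth component: writing a variation of $(T_1,T_2,T_3)$ as $v=(v_1,v_2,v_3)$, the linearization takes the form $\mathrm dF_T(0,v)=\dot v + A\,v$, where $A\in C^1(U,\End(\gf^3))$ is the zeroth-order operator assembling the commutators with the fixed $T_i$ and depends (via the $T_i$) $C^1$-smoothly on the time variable. This is a linear first-order ODE operator, so for any $g\in C^0(U,\gf)\otimes\R^3$ the inhomogeneous equation $\dot v + A\,v = g$ with initial condition $v(0)=0$ has a unique solution $v\in C^1$ by the standard existence theory for linear ODEs on the compact interval $U$. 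Hence $\mathrm dF_T$ is surjective already on the subspace $\{t_0=0\}$.

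The essential point, and the place where the Banach setting demands more than mere surjectivity, is that $\ker\mathrm dF_T$ must admit a closed complement. I would obtain this by producing a bounded right inverse $R$ of $\mathrm dF_T$. Let $\Phi\in C^1(U,\mathrm{GL}(\gf^3))$ be the fundamental solution of $\dot\Phi + A\Phi=0$, $\Phi(0)=\mathrm{id}$; then variation of constants gives
$$(Rg)(s) := \Bigl(0,\ \Phi(s)\int_0^s \Phi(\tau)^{-1} g(\tau)\,\mathrm d\tau\Bigr),$$
a bounded linear map $C^0(U,\gf)\otimes\R^3\to\mathcal{A}$ with $\mathrm dF_T\circ R=\mathrm{id}$. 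Boundedness in the $C^1$-norm follows because $\dot v = g-A\,v$ is then controlled by $\|g\|_{\sup}$ on the compact interval. Consequently $P:=R\circ\mathrm dF_T$ is a bounded idempotent with kernel $\ker\mathrm dF_T$ and image $\operatorname{im}(R)$, so $\mathcal{A}=\ker\mathrm dF_T\oplus\operatorname{im}(R)$ is a topological direct sum. Thus $0$ is a split regular value of $F$, and the regular value theorem for maps of Banach manifolds identifies $F^{-1}(0)$ as a smooth Banach submanifold, with tangent space $\ker\mathrm dF_T$ at each solution.

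I expect the only genuine work to be the ODE estimate showing that $R$ is bounded from $C^0$ into $C^1$ (equivalently, the uniform control of $\Phi,\Phi^{-1}$ on $U$); the smoothness of $F$, the surjectivity of $\mathrm dF_T$, and the passage from a bounded right inverse to the splitting of the kernel are all formal.
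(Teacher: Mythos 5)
Your proposal is correct and follows essentially the same route as the paper: both realise the solution set as the zero locus of the (smooth, quadratic) map given by the difference of the two sides of the equations and verify the implicit function theorem hypothesis by producing a bounded right inverse of the linearisation via the unique solution of the linearised ODE system with $X_0\equiv 0$ and $X_i(0)=0$. You merely spell out the details the paper leaves implicit (variation of constants, the $C^0\to C^1$ bound, and the passage from a bounded right inverse to a splitting of the kernel), all of which are accurate.
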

\begin{proof}
This follows just like the analogous statement in the hyperk\"ahler case \cite{Kronheimer:1988}. Let $\mu:\mathcal A\to C^0(U,\gf)\otimes \R^3$ denote the difference of the RHS and the LHS in \eqref{NahmSchmidUNred}, so that the set of solutions to the Nahm--Schmid equations is given by $\mu^{-1}(0)$. Owing to the implicit function theorem, we have to check that, for any solution $\mathcal T= (T_0,T_1,T_2,T_3)\in\mu^{-1}(0)$, the linearisation 
$$\mathrm d\mu_\mathcal T: {\rm T}_\mathcal T\mathcal A \to C^0(U,\gf)\otimes \R^3$$
has a bounded right-inverse. That is, to $\zeta = (\zeta_1,\zeta_2,\zeta_3)\in C^0(U,\gf)\otimes \R^3$ we must associate a solution
 $X = (X_0,X_1,X_2,X_3)\in {\rm T}_\mathcal T\mathcal A$ of the system of linear ODEs
\begin{eqnarray*}
\dot X_1 + [T_0,X_1] + [X_0, T_1] + [T_2,X_3] + [X_2,T_3] &=& \zeta_1 \\
\dot X_2 + [T_0,X_2] + [X_0, T_2] - [T_3,X_1] - [X_3,T_1] &=& \zeta_2 \\
\dot X_3 + [T_0,X_3] + [X_0, T_3] - [T_1,X_2] - [X_1,T_2] &=& \zeta_3.
\end{eqnarray*}
We note, just like in \cite{Kronheimer:1988}, that for any $\zeta$ this system has a unique solution with $X_0\equiv 0$, $X_i(0) =0$, $i=1,2,3$, and deduce the existence of the required right-inverse.
\end{proof}

We now want to quotient by a suitable gauge group. We consider the Banach Lie group 
$$\mathcal G := C^2(U,G)$$
of gauge transformations acting on $\mathcal A$.
This has a normal subgroup consisting of gauge transformations that are equal to the identity at the endpoints of $U$:
$$\mathcal G_{00} = \{u\in C^2(U,G)\ | \ u(0) = 1_G = u(1)\}.$$
The Lie algebras of $\mathcal G$ and $\mathcal G_{00}$ are given by 
$$\mathrm{Lie}(\mathcal G) = C^2(U,\gf)$$
and
$$\mathrm{Lie}(\mathcal G_{00}) = \{\xi\in C^2(U,\gf)\ | \ \xi(0) = 0 = \xi(1)\}.$$

The action of $\mathcal G$ (and also of $\mathcal G_{00}$) on $\mathcal A$ is smooth and given by \eqref{gaugeaction}. Furthermore:
\begin{lemma}\label{freeproper}
The gauge group $\mathcal G_{00}$ acts freely and properly on $\mathcal A$. 
\end{lemma}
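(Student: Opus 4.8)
The plan is to establish freeness and properness separately, since they rely on different features of the gauge action. For freeness, suppose $u \in \mathcal G_{00}$ fixes a solution $\mathcal T = (T_0,T_1,T_2,T_3) \in \mathcal A$. The action on the $T_i$, $i\geq 1$, is by pointwise conjugation $uT_iu^{-1}$, while on $T_0$ it is $uT_0u^{-1} - \dot u u^{-1}$. The key observation is to read off the equation for $T_0$: since $u.T_0 = T_0$ forces $\dot u u^{-1} = uT_0u^{-1} - T_0 = \mathrm{Ad}_u(T_0) - T_0$, the function $u$ satisfies a first-order linear ODE $\dot u = \mathrm{Ad}_u(T_0)u - T_0 u$ (equivalently $\dot u = uT_0 - T_0 u$). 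With the boundary condition $u(0) = 1_G$ coming from membership in $\mathcal G_{00}$, uniqueness of solutions to this ODE forces $u \equiv 1_G$, because the constant function $1_G$ is itself a solution with the same initial value. Hence the stabiliser of any point is trivial and the action is free.

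For properness, I would show that the map $\mathcal G_{00} \times \mathcal A \to \mathcal A \times \mathcal A$, $(u,\mathcal T) \mapsto (u.\mathcal T, \mathcal T)$, is proper, i.e.\ that preimages of compact sets are compact. The standard strategy here (following Kronheimer's argument in the hyperk\"ahler case \cite{Kronheimer:1988}) is to take sequences $\mathcal T^{(n)} \to \mathcal T$ and $u^{(n)}.\mathcal T^{(n)} \to \mathcal S$ in $\mathcal A$, and to extract a convergent subsequence of the $u^{(n)}$ in $\mathcal G_{00} = C^2(U,G)$. The idea is that the $u^{(n)}$ are determined by an ODE with coefficients built from the converging data: from the transformation law for the $T_0$-component we have $\dot u^{(n)} = u^{(n)}T_0^{(n)} - S_0^{(n)} u^{(n)}$, where $T_0^{(n)}$ and $S_0^{(n)}$ are the $T_0$-components of $\mathcal T^{(n)}$ and $u^{(n)}.\mathcal T^{(n)}$ respectively. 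Since $G$ is compact, the $u^{(n)}(t)$ take values in a compact set, so after passing to a subsequence we may assume $u^{(n)}(0)$ converges in $G$; then Gr\"onwall-type estimates together with convergence of the coefficients in $C^1$ (hence $C^0$) yield $C^1$-convergence of the $u^{(n)}$, and differentiating the ODE once more upgrades this to $C^2$-convergence, giving a limit $u \in \mathcal G_{00}$ with $u.\mathcal T = \mathcal S$. The boundary conditions $u^{(n)}(0)=u^{(n)}(1)=1_G$ pass to the limit, so $u \in \mathcal G_{00}$.

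I expect the main obstacle to be the properness argument rather than freeness. The subtlety is to control the $C^2$-norms of the $u^{(n)}$ uniformly: while compactness of $G$ immediately bounds $u^{(n)}$ in $C^0$, the bounds on $\dot u^{(n)}$ and $\ddot u^{(n)}$ must be bootstrapped from the ODE using the assumed $C^1$-convergence of the connection data, and one has to be careful that the limit genuinely lands in $C^2(U,G)$ and not merely in a weaker space. A clean way to organise this is to fix a bi-invariant metric on $G$ and phrase the estimates intrinsically, or to embed $G$ in some $\mathrm{GL}_N(\R)$ and work with matrix-valued functions where the ODE is linear in $u^{(n)}$; the linearity is what makes the Gr\"onwall estimates and the successive differentiation go through without difficulty. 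Since the action of $\mathcal G_{00}$ on $\mathcal A$ restricts to (and is modelled on) the corresponding action in \cite{Kronheimer:1988}, I would ultimately appeal to that reference for the analytic details once the ODE structure is set up.
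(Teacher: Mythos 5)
Your proposal is correct and follows essentially the same route as the paper: freeness via uniqueness for the initial value problem coming from the $T_0$-component, and properness by bootstrapping $C^2$-convergence of the gauge transformations from the ODE $\dot u = uT_0 - \tilde T_0 u$ with convergent coefficients. The only cosmetic difference is that you obtain the initial $C^0$-convergence from Gr\"onwall/continuous dependence (exploiting $u^{(n)}(0)=1_G$), whereas the paper invokes Arzel\`a--Ascoli; both work.
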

\begin{proof}
 Suppose we have $u\in \mathcal G_{00}$ and $\mathcal T\in\mathcal A$ such that $u.\mathcal T=\mathcal T$. Looking at the $T_0$-component, we see that $u$ solves the intial value problem
$$\dot uu^{-1} = uT_0u^{-1} - T_0, \qquad u(0) = 1_G.$$
Its unique solution is the constant map $u(t)\equiv 1_G$ and hence the action is free. To see that it is proper suppose that we have a sequence $(\mathcal T^m)\subset \mathcal A$ converging to $\mathcal T\in \mathcal A$ and a sequence $(u_m)\subset \mathcal G_{00}$ such that $u_m\mathcal T^m\to \tilde{\mathcal T}\in \mathcal A$. Then 
\begin{equation}\label{convr} u_mT^m_0u_m^{-1}-\dot u_mu_m^{-1}\to \tilde T_0.
\end{equation}
Since $T_0^m$ also converges to $T_0$ in the $C^1$-topology, it follows firstly from the Arzel\`a-Ascoli theorem that a subsequence of $u_m$ converges in the $C^0$-topology, and then a repeated use of \eqref{convr} shows that the convergence is actually in the $C^2$-topology.  Thus the action of $\mathcal G_{00}$ is proper.
\end{proof}

We are now ready to introduce our main object of study.
\begin{definition} \label{moduli}
The  \emph{moduli space of solutions to the Nahm--Schmid equations} is 
\begin{equation}\label{modulispace}
\mathcal M := \{\mathcal T\in\mathcal A\ | \ \text{$\mathcal T$ solves equations (\ref{NahmSchmidUNred})} \}/\mathcal G_{00}.
\end{equation}
\end{definition}

\begin{theorem}\label{MGg3}
$\mathcal M$ is a smooth Banach manifold diffeomorphic to $G\times \gf\times\gf\times\gf$.  
\end{theorem}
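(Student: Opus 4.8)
The plan is to identify the two pieces of gauge-invariant data that a solution carries: a \emph{holonomy} in $G$ and an \emph{initial value} in $\gf^{3}$. Given a solution $\mathcal T=(T_0,T_1,T_2,T_3)$, the linear ODE $\dot u = uT_0$ with $u(0)=1_G$ has a unique solution $u_{\mathcal T}\in C^2(U,G)$ (there is no blow-up, the equation being linear), and $u_{\mathcal T}.\mathcal T$ is in temporal gauge, i.e. has vanishing $0$-component. I set
$$\Phi(\mathcal T):=\bigl(u_{\mathcal T}(1),\,T_1(0),T_2(0),T_3(0)\bigr)\in G\times\gf^{3}.$$
The initial values can be read off directly from $\mathcal T$ because $u_{\mathcal T}(0)=1_G$ leaves them unchanged. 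First I would check that $\Phi$ is invariant under $\mathcal G_{00}$: if $\mathcal T'=v.\mathcal T$ with $v\in\mathcal G_{00}$, then $u_{\mathcal T}v^{-1}$ solves the gauge-fixing equation for $\mathcal T'$ with the correct initial condition, so by uniqueness $u_{\mathcal T'}=u_{\mathcal T}v^{-1}$ and hence $u_{\mathcal T'}(1)=u_{\mathcal T}(1)v(1)^{-1}=u_{\mathcal T}(1)$, while the initial values are fixed since $v(0)=1_G$. Thus $\Phi$ descends to a map $\bar\Phi:\mathcal M\to G\times\gf^{3}$.

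Next I would prove that $\bar\Phi$ is a bijection. For injectivity, suppose $\Phi(\mathcal T)=\Phi(\mathcal T')$. Temporal gauge-fixing reduces $\mathcal T$ and $\mathcal T'$ to solutions of the reduced equations \eqref{NahmSchmidred} with the same initial data, which therefore coincide by uniqueness; hence $\mathcal T'=w.\mathcal T$ with $w=u_{\mathcal T'}^{-1}u_{\mathcal T}$. The endpoint values $w(0)=1_G$ and $w(1)=u_{\mathcal T'}(1)^{-1}u_{\mathcal T}(1)=1_G$ (the holonomies agree) show $w\in\mathcal G_{00}$, so $\mathcal T$ and $\mathcal T'$ define the same point of $\mathcal M$. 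For surjectivity, given $(g,a)\in G\times\gf^{3}$ I solve the reduced equations with initial value $a$; \emph{here the global existence of Corollary \ref{GlobalReg} is essential}, as it guarantees a solution $\tilde{\mathcal T}_a=(0,\tilde T_1,\tilde T_2,\tilde T_3)$ on all of $U$. Choosing any $w\in C^2(U,G)$ with $w(0)=1_G$ and $w(1)=g^{-1}$ (such a path exists because $G$ is connected), the solution $w.\tilde{\mathcal T}_a$ has $\Phi$-image $(g,a)$.

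It remains to upgrade this bijection to a diffeomorphism, and this is the main technical point. The underlying analytic input is that the solution operator of a linear ODE depends smoothly on its coefficient, so that $\mathcal T\mapsto u_{\mathcal T}$, and hence $\Phi$, is a smooth map on the solution manifold $\mu^{-1}(0)$ of Proposition \ref{NSsmooth}; likewise the flow of the reduced equations depends smoothly on its initial data. To see that $\Phi$ is a submersion I would construct smooth local sections: near a point $g_0$ one picks a smooth family $g\mapsto w_g\in C^2(U,G)$ with $w_g(0)=1_G$ and $w_g(1)=g^{-1}$ (for instance writing $g=g_0\exp\xi$ and interpolating), and sets $s(g,a):=w_g.\tilde{\mathcal T}_a$; a direct check gives $\Phi\circ s=\mathrm{id}$. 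Thus $\Phi$ is a surjective submersion admitting local sections, whose fibres are precisely the $\mathcal G_{00}$-orbits by the bijectivity just established. Combined with Lemma \ref{freeproper} (the action is free and proper), this exhibits $\mu^{-1}(0)\to G\times\gf^{3}$ as a principal $\mathcal G_{00}$-bundle, with the sections $s$ as local trivialisations identifying the base with the orbit space $\mathcal M$.

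In this way $\mathcal M$ acquires the structure of a smooth (finite-dimensional) manifold for which $\bar\Phi$ is a diffeomorphism. The step I expect to be most delicate is precisely this last assembly: verifying the smoothness of the holonomy map $\mathcal T\mapsto u_{\mathcal T}(1)$ between the relevant Banach spaces and organising the free and proper action into a genuine principal-bundle (or slice) picture, so that the evident set-theoretic bijection is promoted to a diffeomorphism rather than a mere homeomorphism.
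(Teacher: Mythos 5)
Your proposal is correct and follows essentially the same route as the paper: the same holonomy-plus-initial-data map (up to the immaterial convention $\dot u = uT_0$ versus $\dot u_0 = -T_0u_0$, which inverts the $G$-component), the same use of global existence for surjectivity, and local sections $s(g,a)=w_g.\tilde{\mathcal T}_a$ that are just a repackaging of the paper's explicit slice $S_{\mathcal T}$ through the exponential chart at $u_0(1)$. The delicate point you flag at the end is exactly the one the paper settles by exhibiting the slice map \eqref{slice} and its explicit inverse and checking both are smooth.
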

\begin{proof}
Owing to Lemma \ref{freeproper}, $\mathcal M$, equipped with the quotient topology, is Hausdorff. Let $\mathcal T= (T_0,T_1,T_2,T_3)$ be a solution of the Nahm--Schmid equations. We need to construct a slice to the action of $\mathcal G_{00}$ at $\mathcal T$. Let $u_0$ be the unique gauge transformation such that $T_0= -\dot u_0 u_0^{-1}$
and $u_0(0)=1_G$.  Write $\mathcal A_{\rm NS}$ for the set of solutions to the  Nahm--Schmid equations and define
$$ S_{\mathcal T}=\bigl\{ \mathcal S=(S_0,S_1,S_2,S_3)\in\mathcal A_{\rm NS}\ | \  S_0=u_0X_0u_0^{-1}-\dot u_0 u_0^{-1}, \enskip X_0\in\gf, \enskip \|X_0\|< \epsilon\bigr\},$$
where $\epsilon>0$ will be determined later.
If $u.\mathcal S\in S_{\mathcal T}$ for an  $\mathcal S\in S_{\mathcal T}$ and a $u\in \mathcal G_{00}$, then $g=u_0^{-1}uu_0\in \mathcal G_{00}$ and $gX_0g^{-1}-\dot gg^{-1}=Y_0\in \gf$. The unique solution with $g(0)=1$ is $\exp(-tY_0)\exp(tX_0)$ and hence $g(t)\equiv 1_G$ if $\epsilon$ is small enough. It follows then that  
$u(t)\equiv 1_G$ and the map
\begin{equation}\label{slice} S_{\mathcal T}\times \mathcal G_{00}\to \mathcal A_{\rm NS},\quad (\mathcal S,u)\mapsto u.\mathcal S\end{equation}
is injective for $\epsilon$  small enough. The inverse map is given explicitly as follows. Let $(R_0,R_1,R_2,R_3)\in \mathcal A_{\rm NS}$ and let $v\in\mathcal G$  be the unique gauge transformation such that $R_0= -\dot vv^{-1}$ and $v(0)=1_G$. Let $X_0\in\gf$ satisfy $\exp X_0=u_0^{-1}(1)v(1)$. Then $p(t)={\rm e}^{-tX_0}u_0(t)v(t)^{-1}$ is an element of $\mathcal G_{00}$ and $p^{-1}.\mathcal R\in S_{\mathcal T}$. It is easy to check that both the map \eqref{slice} and its inverse are smooth, and hence $\mathcal A_{\rm NS}/\mathcal G_{00}$ is a smooth Banach manifold.
The diffeomorphism with $G\times \gf\times\gf\times\gf$ is given by 
$$\Psi: \mathcal M\to G\times \gf\times\gf\times\gf, \qquad \mathcal T\mapsto (u_0(1), T_1(0), T_2(0),T_3(0)),$$
where  $ u_0$ is the unique gauge transformation such that $T_0= -\dot u_0 u_0^{-1}$
and $u_0(0)=1_G$. 
The inverse is given by
$$\Phi: G\times \gf\times\gf\times\gf\to \mathcal M, \qquad (\gamma, \xi_1,\xi_2,\xi_3)\mapsto u_\gamma.\mathcal T_\xi,$$
where $u_\gamma\in\mathcal G$ is an arbitrary gauge transformation such that $u_\gamma(0) = 1_G$ and $ u_\gamma(1) = \gamma\in G$, and $\mathcal T_\xi = (0,T_1,T_2,T_3)$ with $(T_1,T_2,T_3)$ being the unique solution of the reduced Nahm--Schmid equations (\ref{NahmSchmidred}) with initial conditions $T_i(0) = \xi_i$. We note that $u_\gamma$ is unique up to multiplication by an element of $\mathcal G_{00}$. 
\end{proof}

\begin{remark} It follows from the proof that the tangent space to the space of solutions $\mathcal A_{NS}$ at a solution $\mathcal T$ splits as $V_{\mathcal T}\oplus H_{\mathcal T}$, where $V_{\mathcal T}$ is the tangent space to the $\mathcal G_{00}$-orbit through $\mathcal T$, and $H_{\mathcal T}$ consists of quadruples $(X_0,X_1,X_2,X_3)$ which solve the linearised Nahm equations with $X_0=u_0\xi u_0^{-1}$ for some $\xi \in\gf$. In other words, $X_0$ is an arbitrary solution of $\dot X_0+[T_0,X_0]=0$. Thus the tangent space ${\rm T}_{\mathcal T} \mathcal M$ can be identified with the set of  solutions $(X_0,X_1,X_2,X_3)$ to the system of linear equations
\begin{equation}\label{tangent}\begin{array}{l}
\dot X_1 + [T_0,X_1] + [X_0, T_1] + [T_2,X_3] + [X_2,T_3] = 0 \\
\dot X_2 + [T_0,X_2] + [X_0, T_2] - [T_3,X_1] - [X_3,T_1] = 0 \\
\dot X_3 + [T_0,X_3] + [X_0, T_3] - [T_1,X_2] - [X_1,T_2] = 0  \\
\dot X_0 + [T_0,X_0] =0.
\end{array}\end{equation}
\end{remark}

\subsection{Group actions\label{actions}}

We now want to examine various symmetries of the construction (\ref{modulispace}).

Since $\mathcal G_{00}\subset \mathcal G$ is a normal subgroup, we obtain an action of $ \mathcal G/\mathcal G_{00} \cong G\times G$ on $\mathcal M$ given by gauge transformations with arbitrary values at $t=0$ and $t=1$. Explicitly, $(u_1,u_2)\in G\times G$ acts by a gauge transformation $u\in\mathcal G$ such that $u_1=u(0), u_2=u(1)$. Any two choices of such a $u$ differ by an element of $\mathcal G_{00}$, which acts trivially on $\mathcal M$.

Furthermore, we have an action of the Lorentz group $\mathrm{SO}(1,2)$ on the moduli space. An element $A=[A_{ij}]_{i,j=1}^3 \in \mathrm{SO}(1,2)$ acts on $(T_0,T_1,T_2,T_3)\in \gf\otimes(\R\oplus\R^{1,2})$ naturally by
$$A.(T_0,T_1,T_2,T_3) = (T_0, \sum_{j=1}^3 A_{1j}T_j,\sum_{j=1}^3 A_{2j}T_j,\sum_{j=1}^3 A_{3j}T_j).$$
It is easy  to check by direct computation that this action preserves the equations (\ref{NahmSchmidUNred}) and commutes with the action of $\mathcal G$. Hence it descends to an  $\mathrm{SO}(1,2)$-action on $\mathcal M$ that commutes with the $(G\times G)$-action described above. The following result follows straightforwardly from examining the proof of Theorem \ref{MGg3}; see \cite{DancerSwann:1996} for an analogous result on the corresponding Nahm moduli space. 

\begin{proposition}
The bijection $\Psi: \mathcal M\to G\times \gf\times\gf\times\gf$ from Theorem \ref{MGg3} is compatible with the actions of $\mathrm{SO}(1,2)$ and $G\times G$ in the following way:
\begin{itemize}
\item[a)] The action of $G\times G$ on $\mathcal M$ and the action 
$$(\gamma ,\xi_1,\xi_2,\xi_3)\mapsto (u_2\gamma u_1^{-1}, u_1\xi_1u_1^{-1}, u_1\xi_2u_1^{-1}, u_1\xi_3u_1^{-1}), \quad (u_1,u_2)\in G\times G$$
on $G\times \gf\times\gf\times\gf$ are intertwined by $\Psi$.
\item[b)] The action of $\mathrm{SO}(1,2)$ on $\mathcal M$ and the action 
$$(\gamma ,\xi_1,\xi_2,\xi_3)\mapsto (\gamma, \sum_{j=1}^3 A_{1j}\xi_j,\sum_{j=1}^3 A_{2j}\xi_j,\sum_{j=1}^3 A_{3j}\xi_j)$$
on $G\times \gf\times\gf\times\gf$ are intertwined by $\Psi$. 
\end{itemize}
\end{proposition}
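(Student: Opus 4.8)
The plan is to verify both intertwining relations directly at the level of representatives, using the explicit formula $\Psi(\mathcal T) = (u_0(1), T_1(0), T_2(0), T_3(0))$ from the proof of Theorem~\ref{MGg3}, where $u_0$ is the unique gauge transformation with $T_0 = -\dot u_0 u_0^{-1}$ and $u_0(0) = 1_G$. Since $\Psi$ descends to $\mathcal M$, any choice of representative within a $(G\times G)$- or $\mathrm{SO}(1,2)$-orbit will do, and I only need to track how the four defining quantities transform.

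Part b) will be almost immediate: the $\mathrm{SO}(1,2)$-action leaves $T_0$ untouched and mixes $T_1,T_2,T_3$ linearly via $A$. Hence the gauge transformation $u_0$ associated with $T_0$ is unchanged, so that the first slot $u_0(1)=\gamma$ is preserved, while the remaining slots become $T_i(0)\mapsto \sum_j A_{ij}T_j(0)=\sum_j A_{ij}\xi_j$. This is exactly the claimed action, and the only input needed beyond the definition of $\Psi$ is the (already noted) fact that the $\mathrm{SO}(1,2)$-action preserves the Nahm--Schmid equations, so that $A.\mathcal T$ again lies in $\mathcal A_{\rm NS}$.

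For part a) I would represent $(u_1,u_2)\in G\times G$ by a gauge transformation $u\in\mathcal G$ with $u(0)=u_1$ and $u(1)=u_2$, and apply \eqref{gaugeaction}. The new spatial components are $uT_iu^{-1}$, whose values at $t=0$ are $u_1\xi_iu_1^{-1}$, giving the last three slots. For the first slot I must produce the gauge transformation $w_0$ with $w_0(0)=1_G$ satisfying $uT_0u^{-1}-\dot u u^{-1}=-\dot w_0 w_0^{-1}$. The key computation is that $v:=uu_0$ already satisfies $-\dot v v^{-1}=uT_0u^{-1}-\dot u u^{-1}$ (a one-line product-rule calculation using $\dot u_0 u_0^{-1}=-T_0$); correcting the initial value by a constant then gives $w_0=uu_0u_1^{-1}$. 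Evaluating at $t=1$ yields $w_0(1)=u_2u_0(1)u_1^{-1}=u_2\gamma u_1^{-1}$, which is the first slot of the claimed action.

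I expect no genuine obstacle here: both computations are mechanical once the auxiliary gauge transformation $w_0=uu_0u_1^{-1}$ is guessed. The only points deserving a word of care are the well-definedness on $\mathcal M$ (two representatives of $(u_1,u_2)$ differ by an element of $\mathcal G_{00}$, which acts trivially, so the outcome is independent of the choice of $u$) and the observation that right multiplication by a constant leaves $\dot w w^{-1}$ invariant, which is precisely what allows me to impose $w_0(0)=1_G$ without disturbing the logarithmic derivative.
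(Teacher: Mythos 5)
Your proof is correct and is precisely the direct verification that the paper omits (it merely asserts that the result ``follows straightforwardly from examining the proof of Theorem~\ref{MGg3}''). The key computation $-\dot v v^{-1}=uT_0u^{-1}-\dot u u^{-1}$ for $v=uu_0$, the correction $w_0=uu_0u_1^{-1}$ using invariance of $\dot w w^{-1}$ under right multiplication by a constant, and the well-definedness remark are all accurate and complete.
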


\subsection{Explicit solutions for $G=\mathrm{SU}(2)$\label{explicit}} 

Here we will exhibit an example of nontrivial $\su(2)$-valued solutions, which can be found using an Ansatz analogous to the one used to
calculate Nahm data for centred ${\rm SU}(2)$ magnetic 2-monopoles in Euclidean $\mathbb{R}^3$ (cf. (8.155) in \cite{ManSut}). It will turn out that this Ansatz essentially yields all solutions in the $\su(2)$-case. Setting
\begin{equation} \label{PauliAnsatz}
T_0=0,\qquad T_j(t) = f_j(t)e_j,\quad j=1,2,3,
\end{equation}
where $\{e_j \}_{j=1}^3$ is a standard basis for $\su(2)$, i.e. $[e_i,e_j] = e_k$ if $(ijk)$ is a cyclic permutation of $(123)$, then the Nahm--Schmid equations yield the following system of ODEs for the
functions $f_j$:
\begin{eqnarray*}
 \dot f_1&=&-f_2 f_3,\\
 \dot f_2&=& f_3 f_1,\\
 \dot f_3&=& f_1 f_2.
\end{eqnarray*}
The general solution to this system can be expressed in terms of Jacobi elliptic functions~\cite{ByrFriHEI}
with arbitrary modulus $\kappa \in [0,1]$:
\begin{eqnarray*}
f_1(t)&=& a \kappa\,  {\rm sn}_\kappa(a  t + b),\\
f_2(t)&=&a \kappa   \, {\rm cn}_\kappa(a t+ b),\\
f_3(t)&=& -a \, {\rm dn}_\kappa(a t+ b),
\end{eqnarray*}
where $a, b\in \R$ are arbitrary constants. For $\kappa = 0$ this yields with $\mathrm{dn}_0 \equiv 1$  the trivial solution $T_3 = \mathrm{constant}, T_1=T_2 = 0$. For $\kappa  = 1$ we obtain 
$\mathrm{sn}_1(t) = \tanh(t), \mathrm{cn}_1(t) = \mathrm{dn}_1(t) = {\rm sech}(t)$.

The solution corresponding to fixed $\kappa, a,b$ is smooth for all $t \in \mathbb{R}$, as we expected from Corollary~\ref{GlobalReg}. For $\kappa\in (0,1)$ it is also periodic with period $4 {\rm K}(\kappa)/a$, where $\rm K$ is the complete elliptic integral of the first kind~\cite{ByrFriHEI}. If we fix an invariant inner product on $\su(2)$ such that the basis $\{e_1,e_2,e_3\}$ is orthonormal, then we see that the conserved quantities $\|T_1\|^2 + \|T_i\|^2$ are  
\begin{equation} \label{rels} f_1^2 + f_2^2 = a^2 \kappa^2, \qquad f_1^2 + f_3^2 = a^2.\end{equation}
Note that for $\kappa =1$ the conserved quantities coincide, and the solution is of course not periodic in this case. 

Matsoukas has shown in his DPhil thesis~\cite{Matsoukas:2010}  that in fact any solution to the Nahm--Schmid equations with values in $\su(2)$ may be put into this form. To see this, first gauge $T_0$ away  by a gauge transformation in $G\cong \{1_G\}\times G\subset G\times G$, i.e. a gauge transformation which equals the identity at $t=0$. Then use the $\mathrm{SO}(1,2)$-action to put the solution $(T_1,T_2,T_3)$ into standard form such that $\langle T_i,T_j\rangle = 0$ whenever $i\neq j$. Now the vector space $\su(2)$ is three-dimensional, so it follows that for generic $t\in \R$ the elements $T_1(t),T_2(t),T_3(t)$ form an orthogonal basis. The Nahm--Schmid equations imply that $T_i(t)$ and $\dot T_i(t) = \pm[T_j(t),T_k(t)]$ are linearly dependent for each $t$. It follows that the direction of $T_i$ does not vary with time for each $i$. This implies  that there exists an orthonormal basis of $\su(2)$, say the standard one used above, such that the $T_j$ are of the form 
$$T_j(t) = f_j(t)e_j$$
and hence reduce to the Ansatz used above to obtain explicit solutions.

We summarise this discussion as follows.

\begin{proposition}[cf. \cite{Matsoukas:2010}] Let $\mathcal M$ be the moduli space of $\su(2)$-valued solutions to the Nahm--Schmid equations on $[0,1]$ modulo the gauge group $\mathcal G_{00}$. Let $\mathcal{T}\in \mathcal M$. Then the $\mathrm{SO}(1,2)\times G$-orbit of $\mathcal T$ contains an $\mathrm{SO}(3)\cap \mathrm{SO}(1,2)\cong \mathrm{SO}(2)$-orbit of solutions of the form (\ref{PauliAnsatz}),
with respect to a standard orthonormal basis of $\su(2)$. 
\end{proposition}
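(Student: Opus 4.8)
The plan is to make rigorous the reduction sketched above the statement, the two genuinely non-trivial inputs being an \emph{indefinite} diagonalisation (carried out with $\mathrm{SO}(1,2)$ rather than the ordinary spectral theorem) and the conservation laws of Proposition~\ref{Conserved}. Throughout I would work in the model $\su(2)\cong\R^3$ in which the bracket becomes the cross product, so that the reduced equations read $\dot T_1=-T_2\times T_3$, $\dot T_2=T_3\times T_1$, $\dot T_3=T_1\times T_2$, and $\langle\,\cdot\,,\,\cdot\,\rangle$ is the standard (positive-definite) Euclidean product. First I would use the $G$-factor to normalise $T_0$: choosing the unique gauge transformation $u$ with $u(0)=1_G$ solving $\dot u u^{-1}=uT_0u^{-1}-T_0$ (an element of $\{1_G\}\times G$) brings the representative to reduced form $\mathcal T=(0,T_1,T_2,T_3)$.

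The decisive step is to use $\mathrm{SO}(1,2)$, which commutes with the flow and preserves \eqref{NahmSchmidUNred}, to annihilate the off-diagonal inner products. I would fix a time $t_0$ at which $T_1,T_2,T_3$ are linearly independent and form the Gram matrix $M=(\langle T_i(t_0),T_j(t_0)\rangle)_{i,j=1}^{3}$, which is then positive definite because $\langle\,\cdot\,,\,\cdot\,\rangle$ is definite on $\su(2)$. I seek $A\in\mathrm{SO}(1,2)$ with $A\,M\,A^{T}$ diagonal; equivalently I want to simultaneously diagonalise $M$ and the metric $\eta=\diag(1,-1,-1)$ on $\R^{1,2}$. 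Since $M$ is definite this is standard: the $M$-self-adjoint operator $M^{-1}\eta$ is diagonalisable over $\R$ with an $M$-orthogonal eigenbasis in which both $M$ and $\eta$ are diagonal; rescaling the eigenvectors so that $\eta$ takes the values $(1,-1,-1)$ — the signs being forced by Sylvester's law of inertia — produces the desired $A$, which I take in the identity component. Replacing $\mathcal T$ by $A.\mathcal T$, I obtain a solution with $\langle T_i(t_0),T_j(t_0)\rangle=0$ for $i\neq j$.

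Now Proposition~\ref{Conserved} does the work: the off-diagonal quantities $\langle T_i,T_j\rangle$ $(i\neq j)$ are conserved, so they vanish for \emph{all} $t$, not just at $t_0$. Hence the $T_i(t)$ are mutually orthogonal at every time and, for generic $t$, form an orthogonal basis of $\su(2)\cong\R^3$. Because $\dot T_i=\pm T_j\times T_k$ is orthogonal to both $T_j$ and $T_k$, and $T_i$ is too, the one-dimensionality of the orthogonal complement of $\mathrm{span}(T_j,T_k)$ forces $\dot T_i\parallel T_i$; thus each $T_i$ keeps a constant direction $e_i$, the $e_i$ are orthonormal after normalising, and $T_i(t)=f_i(t)e_i$, which is exactly \eqref{PauliAnsatz}. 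The remaining freedom in this normal form is governed by the subgroup of $\mathrm{SO}(1,2)$ fixing the distinguished (positive-norm, $T_1$-)axis and rotating the other two, namely $\mathrm{SO}(3)\cap\mathrm{SO}(1,2)\cong\mathrm{SO}(2)$, which accounts for the $\mathrm{SO}(2)$-orbit of Ansatz solutions asserted in the statement.

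I expect the indefinite diagonalisation, and the degenerate configurations it sidesteps, to be the main obstacle. The clean simultaneous-diagonalisation argument requires $M\succ 0$ at some time; one must still deal with solutions for which $T_1,T_2,T_3$ are linearly dependent for all $t$ (so $M$ has rank $\le 2$ throughout and congruence by $\mathrm{SO}(1,2)$ may genuinely fail to diagonalise, for example when $\mathrm{span}(T_1,T_2,T_3)$ meets the null cone of $\eta$). These low-rank cases should be treated by hand — the positive-definite conserved quantity $C=2\|T_1\|^2+\|T_2\|^2+\|T_3\|^2$ controls them, and solutions such as the constant $\kappa=0$ one are already of the form \eqref{PauliAnsatz} after a rotation — and checking that every such orbit still meets the Ansatz locus, together with pinning down the residual $\mathrm{SO}(2)$ in these special cases, is where the bookkeeping is most delicate.
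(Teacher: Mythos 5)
Your proposal follows essentially the same route as the paper: gauge away $T_0$ using the $G$-factor, use the $\mathrm{SO}(1,2)$-action together with the conserved quantities $\langle T_i,T_j\rangle$ to make the $T_i(t)$ mutually orthogonal for all $t$, and then deduce from $\dot T_i\parallel T_i$ that each component keeps a fixed direction, yielding the Ansatz \eqref{PauliAnsatz}. You supply more detail than the paper on the indefinite simultaneous diagonalisation of the Gram matrix with $\eta$ and you honestly flag the everywhere-degenerate (low-rank) configurations, which the paper passes over with the word ``generic''; both additions are sound and in the spirit of the original argument.
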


\subsection{Hypersymplectic interpretation\label{hsinterpret}}
At a point $\mathcal T=(T_0,T_1,T_2,T_3)\in\mathcal A$, the tangent space to $\mathcal A$ has the description 
$${\rm T}_\mathcal T\mathcal A = C^1(U,\gf)\otimes \R^4.$$ 

On the $\R^4\cong \mathbb B$ factor, we have the split-quaternionic structure $I,S,T$ induced  from multiplication by $\rm -i,s,t$  on $\mathbb B$ from the right. Explicitly:
\begin{eqnarray}\label{I}
I(X_0,X_1,X_2,X_3) &=& (X_1,-X_0,-X_3,X_2)\\
\label{S} S(X_0,X_1,X_2,X_3) &=& (X_2,X_3,X_0,X_1)\\
\label{T} T(X_0,X_1,X_2,X_3) &=& (X_3,-X_2,-X_1,X_0).
\end{eqnarray}

We endow $\mathcal A = C^1(U,\gf)\otimes \R^{2,2}$ with the indefinite metric given by the tensor product of the $L^2$-metric and the standard metric on $\R^{2,2}$. On tangent vectors $X=(X_0,X_1,X_2,X_3)$ and $Y=(Y_0,Y_1,Y_2,Y_3)$,  we have the formula
\begin{equation}\label{Metric}
g(X,Y) = \int_U \sum_{i=0}^3 \eta_{ii}\langle X_i(t),Y_i(t)\rangle\,  \mathrm dt.
\end{equation}
Here we wrote $\eta_{ii}$ for the diagonal coefficients of the metric on $\R^{2,2}$, i.e. $\eta_{00} = \eta_{11} = 1, \eta_{22}=\eta_{33} = -1$.
This gives $\mathcal A$ the structure of a flat infinite-dimensional hypersymplectic manifold with symplectic forms 
$$\omega_I(\cdot,\cdot) = g(I \cdot, \cdot),\quad \omega_S(\cdot,\cdot) = g(S \cdot,\cdot),\quad \omega_T(\cdot,\cdot) = g(T\cdot,\cdot).$$ 
The action of the group of gauge transformations $\mathcal G$ preserves this flat hypersymplectic structure.

By calculating $\frac{\mathrm d}{\mathrm d\theta}|_{\theta =0}\exp(\theta\xi).(T_0,T_1,T_2,T_3)$ for $\xi\in \mathrm{Lie}(\mathcal G)$, we see that the fundamental vector fields associated to the action of $\mathcal G$ (and hence also for $\mathcal G_{00}$) at a point $\mathcal T=(T_0,T_1,T_2,T_3)\in \mathcal A$ are given by
$$X^\xi_\mathcal T = (-\dot \xi + [\xi,T_0], [\xi,T_{1}], [\xi,T_{2}], [\xi,T_{3}]).$$

\begin{proposition}\label{mmaps}
The action of the group $\mathcal G_{00}$ on $\mathcal A$ preserves the hypersymplectic structure and the hypersymplectic moment map at $\mathcal T = (T_0,T_1,T_2,T_3)\in\mathcal A$ is given by
\begin{eqnarray*}
\mu_I(\mathcal T) &=& -\dot T_1 - [T_0,T_1] - [T_2,T_3], \\
\mu_S(\mathcal T) &=& \dot T_2 +[T_0,T_2] - [T_3,T_1], \\
\mu_T(\mathcal T) &=& \dot T_3 + [T_0,T_3] - [T_1,T_2].
\end{eqnarray*}
Hence we can write the moduli space $\mathcal M$ of solutions to the Nahm--Schmid equations formally as the hypersymplectic quotient:
$$\mathcal M = \mu^{-1}(0)/\mathcal G_{00}.$$
\end{proposition}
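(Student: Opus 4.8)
The plan is to verify that each claimed formula $\mu_i(\mathcal T)$ is genuinely a moment map for the corresponding symplectic form $\omega_i$ with respect to the $\mathcal G_{00}$-action, and then to read off the hypersymplectic quotient description by matching $\mu^{-1}(0)$ with the Nahm--Schmid equations. The defining condition for a moment map is that for every $\xi \in \mathrm{Lie}(\mathcal G_{00})$ and every tangent vector $Y$ at $\mathcal T$, we have $\mathrm d\langle\mu_i,\xi\rangle(Y) = \omega_i(X^\xi_{\mathcal T}, Y)$, where $X^\xi_{\mathcal T}$ is the fundamental vector field computed just above the statement. The invariance of the hypersymplectic structure under $\mathcal G$ (hence $\mathcal G_{00}$) has already been asserted, so the core task is the moment-map identity for each of $I$, $S$, $T$.

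First I would treat the $\omega_I$ case in detail and then indicate that $\omega_S, \omega_T$ follow by the same mechanism. Pairing $\mu_I$ against $\xi \in \mathrm{Lie}(\mathcal G_{00})$ gives the function $\mathcal T \mapsto \int_U \langle \xi, -\dot T_1 - [T_0,T_1] - [T_2,T_3]\rangle\,\mathrm dt$; differentiating this in the direction $Y=(Y_0,Y_1,Y_2,Y_3)$ produces an integral of $\langle \xi, \cdot\rangle$ against the linearisation of the right-hand side. On the other side, using the explicit form of $I$ from \eqref{I} together with the metric \eqref{Metric} and the fundamental vector field $X^\xi_{\mathcal T}$, one computes $\omega_I(X^\xi_{\mathcal T}, Y) = g(IX^\xi_{\mathcal T}, Y)$ directly. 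The two expressions should agree after an integration by parts in the $\dot T_1$ term, and here the boundary condition $\xi(0)=\xi(1)=0$ built into $\mathrm{Lie}(\mathcal G_{00})$ is exactly what kills the boundary contribution; this is the one place where passing from $\mathcal G$ to $\mathcal G_{00}$ is essential and should be flagged. The $\mathrm{Ad}$-invariance of $\langle\cdot,\cdot\rangle$ is then used repeatedly to move brackets across the pairing and match the commutator terms.

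The main obstacle, modest but worth care, is bookkeeping the signs. The endomorphisms $I,S,T$ permute the four components with specific signs, the metric carries the signature coefficients $\eta_{ii}$ with $\eta_{22}=\eta_{33}=-1$, and the Nahm--Schmid system itself has asymmetric signs across its three lines; all of these must be tracked consistently so that $IX^\xi$ paired against $Y$ reproduces precisely the linearised $\mu_I$ and not $-\mu_I$ or a permuted variant. I would organise the computation so that the sign conventions of \eqref{I}--\eqref{T} and of \eqref{Metric} are substituted mechanically, leaving the integration by parts and the invariance of the inner product as the only genuine manipulations.

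Finally, once the three moment-map identities are established, the component formulas for $\mu_I,\mu_S,\mu_T$ are, up to overall signs, exactly the three Nahm--Schmid equations \eqref{NahmSchmidUNred}; hence $\mu^{-1}(0)$ is precisely the solution set $\mathcal A_{\mathrm{NS}}$. Combining this identification with Definition \ref{moduli} immediately yields $\mathcal M = \mu^{-1}(0)/\mathcal G_{00}$. I would emphasise the word \emph{formally} as the authors do, since the infinite-dimensional and indefinite-metric setting means the abstract hypersymplectic quotient proposition is being used as a structural template rather than invoked verbatim; the concrete smoothness of $\mathcal M$ has in any case already been secured by Theorem \ref{MGg3}.
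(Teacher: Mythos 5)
Your proposal is correct and follows essentially the same route as the paper: the authors likewise verify the moment-map identity by computing $\omega_I(X^\xi_{\mathcal T},Y)=g(IX^\xi_{\mathcal T},Y)$ explicitly, integrating by parts, and using the boundary conditions $\xi(0)=\xi(1)=0$ to kill the boundary term, treating only the $\omega_I$ case in detail and declaring $\omega_S,\omega_T$ analogous. Your added remarks on sign bookkeeping and on the identification of $\mu^{-1}(0)$ with the solution set are consistent with, and slightly more explicit than, what the paper records.
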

\begin{proof}
We only exhibit the calculation for $\mu_I$, as the other two equations are obtained analogously.
Let $\mathcal T\in\mathcal A$, $\xi\in\mathrm{Lie}(\mathcal G_{00})$. First observe that 
$$IX^\xi_\mathcal T = I(-\dot\xi + [\xi,T_0],[\xi,T_{1}], [\xi,T_{2}], [\xi,T_{3}]) = ([\xi,T_1], \dot\xi-[\xi,T_0],-[\xi,T_3], [\xi,T_2]).$$
Thus, we can calculate for $Y\in {\rm T}_\mathcal T\mathcal A$, using integration by parts and the boundary condition $\xi(0) = 0 = \xi(1)$:
\begin{eqnarray*}
\omega_I(X^\xi,Y) &=& g(IX_\mathcal T^\xi,Y)\\
&=& \int_0^1\langle [\xi,T_1], Y_0\rangle +\langle \dot\xi-[\xi,T_0],Y_1\rangle +\langle [\xi,T_3], Y_2\rangle -\langle [\xi,T_2], Y_3\rangle \mathrm dt\\
&=& \langle \xi,Y_1\rangle|_0^1 + \int_0^1\langle \xi, -\dot\xi - [T_0,Y_1] - [Y_0,T_1] - [T_2,Y_3]-[Y_2,T_3]\rangle\mathrm dt \\
&=&  \int_0^1\langle \xi, -\dot\xi - [T_0,Y_1] - [Y_0,T_1] - [T_2,Y_3]-[Y_2,T_3]\rangle\mathrm dt.
\end{eqnarray*}
The assertion follows.
\end{proof}

\begin{lemma}\label{Lem:Slice}
Let $\mathcal T\in \mathcal A$. 
The orthogonal complement to the tangent space to the $\mathcal G_{00}$-orbit through $\mathcal T$ with respect to the indefinite metric $g$ is given by tangent vectors $(X_0,X_1,X_2,X_3)\in {\rm T}_\mathcal T\mathcal A$ satisfying the equation
\begin{equation} \label{floweq}
\dot X_0 + \sum_{i=0}^3 \eta_{ii}[T_i,X_i] = 0.
\end{equation}
\end{lemma}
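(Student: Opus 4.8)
The plan is to characterise the orthogonal complement by pairing an arbitrary tangent vector against every fundamental vector field of the $\mathcal G_{00}$-action and then stripping off the test function via the fundamental lemma of the calculus of variations. Recall that the tangent space to the orbit through $\mathcal T$ is spanned by the fundamental vector fields
\[
X^\xi_\mathcal T = (-\dot\xi + [\xi, T_0],\, [\xi, T_1],\, [\xi, T_2],\, [\xi, T_3]), \qquad \xi \in \mathrm{Lie}(\mathcal G_{00}),
\]
so a tangent vector $X = (X_0, X_1, X_2, X_3) \in {\rm T}_\mathcal T\mathcal A$ lies in the orthogonal complement precisely when $g(X^\xi, X) = 0$ for all such $\xi$. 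The entire content of the lemma is the evaluation of this pairing.

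First I would insert $X^\xi_\mathcal T$ into the metric \eqref{Metric}, obtaining (using $\eta_{00} = \eta_{11} = 1$)
\[
g(X^\xi, X) = \int_U \langle -\dot\xi + [\xi, T_0], X_0\rangle + \sum_{i=1}^3 \eta_{ii}\langle [\xi, T_i], X_i\rangle\, \mathrm dt.
\]
I would then integrate the $\dot\xi$-term by parts; the boundary contribution $-\langle \xi, X_0\rangle|_0^1$ vanishes because $\xi(0) = 0 = \xi(1)$ for elements of $\mathrm{Lie}(\mathcal G_{00})$, exactly as in the moment-map computation of Proposition \ref{mmaps}. For the bracket terms I would apply the $\mathrm{Ad}$-invariance of $\langle \cdot, \cdot\rangle$ in the form $\langle [\xi, T_i], X_i\rangle = \langle \xi, [T_i, X_i]\rangle$, moving $\xi$ into the first slot of every summand. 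Collecting the results, and absorbing the $i=0$ bracket via $\eta_{00} = 1$, gives
\[
g(X^\xi, X) = \int_U \Big\langle \xi,\ \dot X_0 + \sum_{i=0}^3 \eta_{ii}[T_i, X_i]\Big\rangle\, \mathrm dt.
\]

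Finally, this pairing vanishes for every $\xi \in C^2(U, \gf)$ with $\xi(0) = 0 = \xi(1)$ if and only if the continuous $\gf$-valued function $\dot X_0 + \sum_{i=0}^3 \eta_{ii}[T_i, X_i]$ is identically zero, which is precisely equation \eqref{floweq}. The computation itself is routine and mirrors Proposition \ref{mmaps}; the only point requiring a moment's care is this last implication. The mild obstacle is to verify that the admissible test functions --- which are merely of class $C^2$ and vanish at the two endpoints --- still form a large enough class to annihilate a continuous function only when it is zero. This is the standard fundamental lemma: one localises $\xi$ to a small subinterval of $(0,1)$ and aligns it with any prescribed direction in $\gf$, so that a nonzero value of the bracket expression at an interior point would force a nonzero pairing.
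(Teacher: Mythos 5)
Your proposal is correct and follows the same route as the paper: insert the fundamental vector field into the metric, integrate the $\dot\xi$-term by parts using the vanishing of $\xi$ at the endpoints, and apply $\mathrm{Ad}$-invariance to arrive at $\int_U \langle \xi, \dot X_0 + \sum_{i=0}^3 \eta_{ii}[T_i,X_i]\rangle\,\mathrm dt$. The only difference is that you spell out the final nondegeneracy-of-pairing step (the fundamental lemma for $C^2$ test functions vanishing at the endpoints), which the paper leaves implicit.
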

\begin{proof}
This is a standard calculation using integration by parts and the boundary conditions defining $\mathrm{Lie}(\mathcal G_{00})$. Let $\xi\in\mathrm{Lie}(\mathcal G_{00})$, $(X_0,X_1,X_2,X_3)\in {\rm T}_\mathcal T\mathcal A$.  Then
\begin{eqnarray*}
g(X^\xi_\mathcal T, X) &=& \int_0^1\langle -\dot\xi + [\xi, T_0], X_0\rangle + \sum_{i=1}^3\eta_{ii}\langle [\xi, T_i],X_i\rangle\mathrm dt\\
&=&  \int_0^1\langle \xi , \dot X_0\rangle + \sum_{i=0}^3\eta_{ii}\langle \xi,[ T_i,X_i]\rangle \mathrm dt.
\end{eqnarray*}
The result follows.
\end{proof}

\begin{definition}
Consider the Banach manifold $\mu^{-1}(0)\subset \mathcal A$ of solutions to the Nahm--Schmid equations. The \emph{degeneracy locus $\mathcal D$} is  the set of points $\mathcal T \in \mu^{-1}(0)$ such that the metric $g$ restricted to the tangent space of the $\mathcal G_{00}$-orbit through $\mathcal T$ is degenerate. 
\end{definition}

\begin{lemma}
The actions of the gauge group $\mathcal G$ and of $\mathrm{SO}(1,2)$ both  preserve the degeneracy locus $\mathcal D$. 
\end{lemma}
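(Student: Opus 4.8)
The plan is to deduce the statement from two observations about how each group action interacts with the $\mathcal G_{00}$-orbits: both $\mathcal G$ and $\mathrm{SO}(1,2)$ act by isometries of the indefinite metric $g$, and both carry the tangent space $V_\mathcal T$ to the $\mathcal G_{00}$-orbit through $\mathcal T$ isomorphically onto the orbit tangent space at the image point. Since an isometric linear isomorphism carries the radical of the restricted form onto the radical of the restricted form at the target, the restriction of $g$ to one orbit tangent space is degenerate precisely when it is degenerate at the image; this is exactly the condition defining $\mathcal D$. Note first that both actions preserve $\mu^{-1}(0)$ --- the $\mathcal G$-action by gauge invariance of the equations, and the $\mathrm{SO}(1,2)$-action because it preserves \eqref{NahmSchmidUNred} --- so it suffices to track the orbit tangent spaces.

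For the gauge group, fix $u\in\mathcal G$ and write $\Phi_u\colon\mathcal A\to\mathcal A$ for the diffeomorphism $\mathcal T\mapsto u.\mathcal T$, which is an isometry of $g$ as recorded above. The key point is that $\mathcal G_{00}$ is \emph{normal} in $\mathcal G$: for $\xi\in\mathrm{Lie}(\mathcal G_{00})$ a direct computation of the fundamental vector fields gives $\mathrm d\Phi_u\bigl(X^\xi_\mathcal T\bigr)=X^{u\xi u^{-1}}_{u.\mathcal T}$, and since $\xi(0)=\xi(1)=0$ we have $(u\xi u^{-1})(0)=(u\xi u^{-1})(1)=0$, so $u\xi u^{-1}\in\mathrm{Lie}(\mathcal G_{00})$. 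Hence $\mathrm d\Phi_u(V_\mathcal T)\subseteq V_{u.\mathcal T}$, and applying the same to $u^{-1}$ yields equality. As the restriction of the global isometry $\mathrm d\Phi_u$ to $V_\mathcal T$ is a metric isomorphism onto $V_{u.\mathcal T}$, it sends the radical of $g|_{V_\mathcal T}$ onto that of $g|_{V_{u.\mathcal T}}$; thus $\mathcal T\in\mathcal D$ if and only if $u.\mathcal T\in\mathcal D$.

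For $\mathrm{SO}(1,2)$ the argument runs identically once two points are checked. First, the action is by isometries: it fixes the $T_0$-component and acts on $(T_1,T_2,T_3)$ by a matrix preserving $\diag(1,-1,-1)$ on $\R^{1,2}$, so it preserves each integrand $\sum_i\eta_{ii}\langle X_i,Y_i\rangle$ in \eqref{Metric}. Second, the $\mathrm{SO}(1,2)$-action commutes with the gauge action, in particular with that of $\mathcal G_{00}$; therefore for $A\in\mathrm{SO}(1,2)$ the diffeomorphism $\Phi_A$ carries the $\mathcal G_{00}$-orbit through $\mathcal T$ onto the one through $A.\mathcal T$, whence $\mathrm d\Phi_A(V_\mathcal T)=V_{A.\mathcal T}$. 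Exactly as before, $\mathrm d\Phi_A$ is a metric isomorphism of these orbit tangent spaces, so it preserves degeneracy and $\mathcal D$ is $\mathrm{SO}(1,2)$-invariant.

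The computations are routine; the only step that genuinely needs care is the identity $\mathrm d\Phi_u(X^\xi_\mathcal T)=X^{u\xi u^{-1}}_{u.\mathcal T}$ together with the observation that normality keeps $u\xi u^{-1}$ in $\mathrm{Lie}(\mathcal G_{00})$ (and, correspondingly, the commutativity of the $\mathrm{SO}(1,2)$-action with $\mathcal G_{00}$), since this is what guarantees that $V_\mathcal T$ is mapped \emph{onto} the orbit tangent space at the image point rather than merely \emph{into} some larger subspace. Everything else reduces to the elementary fact that an isometric isomorphism preserves the dimension of the radical of the form.
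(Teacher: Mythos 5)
Your proof is correct and takes the same route as the paper, which simply observes that both groups act by isometries; you additionally spell out the (true, and tacitly used) fact that both actions carry $\mathcal G_{00}$-orbits onto $\mathcal G_{00}$-orbits, via normality of $\mathcal G_{00}$ in $\mathcal G$ and commutativity of the $\mathrm{SO}(1,2)$-action with the gauge action. Nothing further is needed.
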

\begin{proof}
This follows immediately from the fact that both $\mathcal G$ and $\mathrm{SO}(1,2)$ act by isometries. 
\end{proof}

\begin{proposition}[\cite{Matsoukas:2010}] \label{previousprop}
The degeneracy locus $\mathcal D$ consists of exactly those solutions to the Nahm--Schmid equations such that the boundary value problem 
$$\frac{\mathrm d^2\xi}{\mathrm dt^2} + [\dot T_0, \xi] + 2[T_0, \dot \xi] + \sum_{i=0}^3 \eta_{ii}[T_i,[T_i,\xi]] = 0,\quad \xi\in \mathrm{Lie}(\mathcal G_{00})$$
has a nontrivial solution. 
\end{proposition}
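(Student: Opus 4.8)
The plan is to recast the degeneracy condition in terms of fundamental vector fields and then invoke Lemma~\ref{Lem:Slice}. Recall that the tangent space $V_{\mathcal T}$ to the $\mathcal G_{00}$-orbit through $\mathcal T$ is the image of the map $\xi\mapsto X^\xi_{\mathcal T}$, where $X^\xi_{\mathcal T} = (-\dot\xi + [\xi,T_0], [\xi,T_1], [\xi,T_2], [\xi,T_3])$, and this map is injective because the action is free by Lemma~\ref{freeproper}. The metric $g$ restricted to $V_{\mathcal T}$ is degenerate precisely when its radical is nontrivial, i.e.\ when there exists a nonzero $\xi\in\mathrm{Lie}(\mathcal G_{00})$ such that $X^\xi_{\mathcal T}$ is $g$-orthogonal to all of $V_{\mathcal T}$. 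In other words, I must characterise when the fundamental vector field $X^\xi_{\mathcal T}$ simultaneously lies in $V_{\mathcal T}$ and in its orthogonal complement.

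The key step is to feed the explicit form of $X^\xi_{\mathcal T}$ into the orthogonality criterion of Lemma~\ref{Lem:Slice}, which states that the orthogonal complement of $V_{\mathcal T}$ consists of quadruples $(X_0,X_1,X_2,X_3)$ satisfying \eqref{floweq}, namely $\dot X_0 + \sum_{i=0}^3 \eta_{ii}[T_i,X_i]=0$. Substituting $X_0 = -\dot\xi + [\xi,T_0]$ and $X_i=[\xi,T_i]$ for $i\geq 1$, I would expand and collect terms. The derivative $\dot X_0$ contributes $-\ddot\xi + [\dot\xi,T_0] + [\xi,\dot T_0]$, while rewriting $[T_i,[\xi,T_i]] = -[T_i,[T_i,\xi]]$ turns the bracket sum into $-[T_0,\dot\xi] - \sum_{i=0}^3 \eta_{ii}[T_i,[T_i,\xi]]$. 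Using $[\dot\xi,T_0]-[T_0,\dot\xi] = -2[T_0,\dot\xi]$ and $[\xi,\dot T_0] = -[\dot T_0,\xi]$, and finally multiplying through by $-1$, produces exactly
$$\ddot\xi + [\dot T_0,\xi] + 2[T_0,\dot\xi] + \sum_{i=0}^3 \eta_{ii}[T_i,[T_i,\xi]] = 0.$$
Since $\xi\in\mathrm{Lie}(\mathcal G_{00})$ already encodes the Dirichlet boundary conditions $\xi(0)=0=\xi(1)$, this is precisely the stated boundary value problem.

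The two directions then match bijectively: a nontrivial solution $\xi$ of the boundary value problem yields a nonzero $X^\xi_{\mathcal T}\in V_{\mathcal T}$ lying in the orthogonal complement of $V_{\mathcal T}$, hence a null direction witnessing degeneracy; conversely, every null direction is of the form $X^\xi_{\mathcal T}$ for a unique nonzero $\xi$ by injectivity, and must satisfy \eqref{floweq} and so the boundary value problem. The only obstacle is careful bookkeeping—tracking the signs $\eta_{ii}$ and handling the first-order term $[\xi,T_0]$ inside $X_0$, whose time-derivative supplies the $[\xi,\dot T_0]$ contribution and, after antisymmetrising against the $-[T_0,\dot\xi]$ coming from the bracket sum, produces the factor of $2$ in the $2[T_0,\dot\xi]$ term. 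No genuine analytic difficulty intervenes, as Lemma~\ref{Lem:Slice} has already absorbed the integration by parts.
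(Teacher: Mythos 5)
Your proposal is correct and follows the same route as the paper: identify the radical of $g|_{V_{\mathcal T}}$ with $V_{\mathcal T}\cap V_{\mathcal T}^{\perp}$, use Lemma \ref{Lem:Slice} to describe the orthogonal complement via \eqref{floweq}, and substitute the explicit fundamental vector field $X^\xi_{\mathcal T}$; your sign bookkeeping (the factor $2[T_0,\dot\xi]$ and the $[\dot T_0,\xi]$ term) checks out. The paper simply states "plugging in yields the result," so your write-up is a more detailed version of the same argument.
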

\begin{proof}
The tangent space to the orbit through $\mathcal T$ is spanned by the values of the fundamental vector fields $X^\xi_\mathcal T$. We thus need to find $\xi\in\mathrm{Lie}(\mathcal G_{00})$ such that the tangent vector 
$X^\xi_\mathcal T\in {\rm T}_\mathcal T\mathcal A$ satisfies  equation (\ref{floweq}). 
Plugging $$X^\xi_\mathcal T = (-\dot\xi + [\xi, T_0], [\xi, T_1], [\xi, T_2], [\xi, T_3])$$ 
into that equation yields the desired result. 
\end{proof}

\begin{proposition}
The complement $\mu^{-1}(0)\setminus \mathcal D$ of the degeneracy locus consists exactly of those solutions $\mathcal T\in \mu^{-1}(0)$ for which the linear operator 
\begin{equation} \label{linearop}
\Delta_\mathcal T: \mathrm{Lie}(\mathcal G_{00})\to C^0(U,\gf), \quad \xi\mapsto\frac{\mathrm d^2\xi}{\mathrm dt^2} + [\dot T_0, \xi] + 2[T_0, \dot \xi] + \sum_{i=0}^3 \eta_{ii}[T_i,[T_i,\xi]]
\end{equation}
is an isomorphism. In particular, $\mu^{-1}(0)\setminus \mathcal D$ is open.
\end{proposition}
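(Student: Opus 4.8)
The plan is to recognise $\Delta_{\mathcal T}$ as a lower-order, and in fact \emph{compact}, perturbation of the Dirichlet Laplacian, and then to combine the Fredholm alternative with Proposition~\ref{previousprop}. Set $X := \mathrm{Lie}(\mathcal G_{00})$ and $Y := C^0(U,\gf)$, and split
$$\Delta_{\mathcal T} = D_0 + K_{\mathcal T}, \qquad D_0\xi := \ddot\xi, \qquad K_{\mathcal T}\xi := [\dot T_0,\xi] + 2[T_0,\dot\xi] + \sum_{i=0}^3\eta_{ii}[T_i,[T_i,\xi]].$$
First I would verify that $D_0\colon X\to Y$ is a Banach-space isomorphism: for $f\in Y$ the equation $\ddot\xi = f$ with $\xi(0)=\xi(1)=0$ has a unique solution, obtained by integrating twice and fixing the single remaining constant through the condition at $t=1$, and this explicit formula exhibits both $D_0$ and $D_0^{-1}$ as bounded.

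The second step is to show that $K_{\mathcal T}\colon X\to Y$ is compact. Each summand factors through a compact inclusion: by Arzel\`a-Ascoli, bounded subsets of $X$ (which is $C^2$ subject to the Dirichlet conditions) are precompact in $C^1(U,\gf)$, so $\xi\mapsto\xi$ and $\xi\mapsto\dot\xi$ are compact from $X$ into $C^0(U,\gf)$; composing with the bounded multiplication operators determined by the fixed coefficients $T_i$ and $\dot T_0$ then gives compactness of $K_{\mathcal T}$. Factoring $\Delta_{\mathcal T} = D_0\bigl(\mathrm{Id} + D_0^{-1}K_{\mathcal T}\bigr)$ with $D_0^{-1}K_{\mathcal T}\colon X\to X$ compact, Riesz--Schauder theory shows that $\mathrm{Id}+D_0^{-1}K_{\mathcal T}$, and hence $\Delta_{\mathcal T}$, is Fredholm of index $0$. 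Consequently $\Delta_{\mathcal T}$ is injective if and only if it is surjective, so injectivity is equivalent to being an isomorphism. Since Proposition~\ref{previousprop} identifies $\mathcal D$ as the set where $\ker\Delta_{\mathcal T}\neq 0$, a point $\mathcal T\in\mu^{-1}(0)$ lies in $\mu^{-1}(0)\setminus\mathcal D$ precisely when $\Delta_{\mathcal T}$ is injective, i.e.\ precisely when it is an isomorphism. This gives the characterisation.

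For the final assertion I would use that the invertible elements of $\mathcal L(X,Y)$ form an open set (via the Neumann series) together with continuity of $\mathcal T\mapsto\Delta_{\mathcal T}$. The latter follows from a bound $\|\Delta_{\mathcal T}-\Delta_{\mathcal T'}\|_{\mathcal L(X,Y)}\le C\,\|\mathcal T-\mathcal T'\|_{C^1}$, valid on bounded sets, obtained by expanding the differences of the commutator terms (adding and subtracting a mixed term for the double brackets) and estimating each factor in sup-norm against $\|\xi\|_{C^2}$. As $\mu^{-1}(0)\setminus\mathcal D$ is, by the first part, the preimage of the open set of isomorphisms under this continuous map, it is open in $\mu^{-1}(0)$.

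The step I expect to require the most care is the compactness of $K_{\mathcal T}$: since $T_0$ is only $C^1$, the coefficient $\dot T_0$ lies merely in $C^0$, so the usable compact embedding is $X\hookrightarrow C^1(U,\gf)$, and one must check that every term of $K_{\mathcal T}$ genuinely lands in $C^0(U,\gf)$ while still gaining compactness from that embedding. Once the index-zero Fredholm property is established, the equivalence of injectivity with invertibility and the openness statement are essentially formal.
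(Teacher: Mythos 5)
Your proposal is correct and follows essentially the same route as the paper: $\Delta_{\mathcal T}$ is recognised as a compact perturbation (via Arzel\`a--Ascoli) of the invertible Dirichlet operator $\frac{\mathrm d^2}{\mathrm dt^2}$, hence Fredholm of index zero, and injectivity off $\mathcal D$ then forces invertibility. You merely supply more detail than the paper does, in particular for the openness assertion, which the paper leaves implicit.
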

\begin{proof}
Owing to the Arzel\`a--Ascoli theorem,  the operator $\Delta_\mathcal T$ is  a compact perturbation of the operator $\frac{\mathrm d^2}{\mathrm dt^2}:\mathrm{Lie}(\mathcal G_{00})\to C^0(U,\gf)$, which is an isomorphism. Hence $\Delta_\mathcal T$ is Fredholm  of index zero. Thanks to Proposition~\ref{previousprop}, $\Delta_\mathcal T$ is injective if $\mathcal T$ is not contained in the degeneracy locus. Hence it must be an isomorphism. 
\end{proof}

An analogous argument to the one outlined in \cite[pp. 4--5]{Kronheimer:1988} (see also \cite{DancerSwann:1996}) for the Nahm case yields then the following result.

\begin{theorem} The submanifold
$$\mathcal M^0 := \left(\mu^{-1}(0)\setminus \mathcal D\right)/\mathcal G_{00}$$
of $\mathcal M$ 
is a smooth hypersymplectic manifold.
The tangent space ${\rm T}_{\mathcal T} \mathcal M^0$ can be identified with the set of  solutions $X$ to the system of linear equations (cf. \eqref{tangent})
\begin{eqnarray*}
\dot X_1 + [T_0,X_1] + [X_0, T_1] + [T_2,X_3] + [X_2,T_3] &=& 0 \\
\dot X_2 + [T_0,X_2] + [X_0, T_2] - [T_3,X_1] - [X_3,T_1] &=& 0 \\
\dot X_3 + [T_0,X_3] + [X_0, T_3] - [T_1,X_2] - [X_1,T_2] &=& 0 \\
\dot X_0 + [T_0,X_0] + [T_1,X_1] -[T_2,X_2] -[T_3,X_3] &=& 0,
\end{eqnarray*}
and the hypersymplectic structure is given by \eqref{I}--\eqref{T}.
\end{theorem}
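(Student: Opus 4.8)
The strategy is to realise $\mathcal M^0$ as a hypersymplectic quotient, verifying in the Banach setting the hypotheses of the Proposition of \cite{Hitchin:1990} quoted in the introduction and following the infinite-dimensional scheme of \cite{Kronheimer:1988}. By Proposition~\ref{NSsmooth} the solution set $\mu^{-1}(0)$ is a smooth Banach submanifold of $\mathcal A$, and by the preceding proposition $\mu^{-1}(0)\setminus\mathcal D$ is the open subset on which the operator $\Delta_\mathcal T$ is an isomorphism. Since $\mathcal D$ is $\mathcal G$-invariant and $\mathcal G_{00}$ acts freely and properly (Lemma~\ref{freeproper}), the quotient $\mathcal M^0$ is a smooth Banach manifold and an open subset of $\mathcal M$ (Theorem~\ref{MGg3}).

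First I would fix the model for the tangent space. Differentiating \eqref{NahmSchmidUNred} identifies ${\rm T}_\mathcal T\mu^{-1}(0)=\ker{\rm d}\mu_\mathcal T$ with the quadruples $X$ solving the first three linear equations in the statement. In contrast to the complement used in the proof of Theorem~\ref{MGg3} (where $X_0$ solves $\dot X_0+[T_0,X_0]=0$), for the hypersymplectic structure one must take the $g$-orthogonal complement $H_\mathcal T$ to the orbit tangent space $V_\mathcal T={\rm span}\{X^\xi_\mathcal T\}$. By Lemma~\ref{Lem:Slice} the condition $X\perp V_\mathcal T$ is precisely equation~\eqref{floweq}, which is the fourth equation of the statement; hence $H_\mathcal T$ is the solution set of all four equations. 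That $\ker{\rm d}\mu_\mathcal T=V_\mathcal T\oplus H_\mathcal T$ is a topological direct sum is exactly where invertibility of $\Delta_\mathcal T$ enters: the bounded projection onto $V_\mathcal T$ is built by inverting $\Delta_\mathcal T$, just as Kronheimer inverts the Laplacian in \cite{Kronheimer:1988}. This yields ${\rm T}_{[\mathcal T]}\mathcal M^0\cong H_\mathcal T$, and the ambient operators \eqref{I}--\eqref{T} restrict to $H_\mathcal T$ because $H_\mathcal T=(\mathbb B\cdot V_\mathcal T)^\perp$ is the $g$-orthogonal complement of the split-quaternionic submodule $\mathbb B\cdot V_\mathcal T=V_\mathcal T+IV_\mathcal T+SV_\mathcal T+TV_\mathcal T$ and $I,S,T$ are $g$-skew-adjoint.

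The heart of the matter, and the step I expect to be the main obstacle, is the non-degeneracy of the induced metric on $H_\mathcal T$; this is the feature genuinely distinguishing the hypersymplectic from the hyperk\"ahler case, where $g$ is definite and the point is automatic. The clean mechanism is that at a zero of the moment map the orbit directions are isotropic for each symplectic form: the equivariance identity gives $\omega_i(X^\xi_\mathcal T,X^\eta_\mathcal T)=\langle\mu_i(\mathcal T),[\xi,\eta]\rangle=0$ for $i\in\{I,S,T\}$ when $\mathcal T\in\mu^{-1}(0)$ (one may also verify $P^\ast I P=P^\ast S P=P^\ast T P=0$ directly from \eqref{NahmSchmidUNred} and the Jacobi identity, where $P\colon\xi\mapsto X^\xi_\mathcal T$ with $P^\ast$ its $g$-adjoint). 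Using that $I$ is a $g$-isometry while $S,T$ are $g$-anti-isometries, together with the relations $IS=T$ etc., this makes the four subspaces $V_\mathcal T,IV_\mathcal T,SV_\mathcal T,TV_\mathcal T$ pairwise $g$-orthogonal, so that $g|_{\mathbb B\cdot V_\mathcal T}$ splits as the orthogonal sum of copies of $\pm g|_{V_\mathcal T}$. Consequently $g|_{\mathbb B\cdot V_\mathcal T}$ is non-degenerate if and only if $g|_{V_\mathcal T}$ is, i.e. if and only if $\mathcal T\notin\mathcal D$, and then $g|_{H_\mathcal T}=g|_{(\mathbb B\cdot V_\mathcal T)^\perp}$ is non-degenerate as well; being compatible with the $\mathbb B$-action it automatically has split signature. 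This is the point at which removing $\mathcal D$, equivalently requiring $\Delta_\mathcal T$ to be an isomorphism, becomes indispensable.

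It then remains to transport the structure to the quotient and check it is parallel. The forms $\omega_I,\omega_S,\omega_T$ on $\mathcal A$ are closed (indeed $\mathcal A$ is flat) and $\mathcal G_{00}$-invariant, with moment map given by Proposition~\ref{mmaps}, so by the usual reduction argument they descend to closed $2$-forms on $\mathcal M^0$; by the equivalence recalled in the introduction (closedness $\Leftrightarrow$ parallel and integrable) the reduced $(g,I,S,T)$ is then a genuine hypersymplectic structure, with $I,S,T$ acting on $H_\mathcal T$ by \eqref{I}--\eqref{T}. Alternatively, once smoothness of the quotient and non-degeneracy of the reduced metric are in hand, one simply invokes the quoted Proposition of \cite{Hitchin:1990}. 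The only genuinely new input beyond \cite{Kronheimer:1988} is the indefiniteness, which is confined to the non-degeneracy step above.
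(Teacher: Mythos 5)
Your proposal is correct and follows essentially the same route as the paper, which at this point simply invokes the argument of \cite{Kronheimer:1988} (pp.~4--5) and \cite{DancerSwann:1996}: slice/quotient smoothness from the earlier results, identification of the tangent space as the $g$-orthogonal complement $H_{\mathcal T}=(\mathbb B\cdot V_{\mathcal T})^\perp$ cut out by the four linear equations, and non-degeneracy off $\mathcal D$. Your explicit treatment of the non-degeneracy step (isotropy of the orbit directions on $\mu^{-1}(0)$ and the pairwise orthogonality of $V_{\mathcal T}, IV_{\mathcal T}, SV_{\mathcal T}, TV_{\mathcal T}$) correctly supplies the one detail where the indefinite case genuinely departs from the hyperk\"ahler one, exactly as the paper intends by excising the degeneracy locus.
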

The last equation above says that $X$ is orthogonal to the $\mathcal G_{00}$-orbit, as we know from Lemma \ref{Lem:Slice}. 

We can obtain some more quantitative information about the degeneracy locus. 

\begin{proposition}\label{Prop:DegenLoc}
Let $\mathcal T \in \mu^{-1}(0)$ be a solution such that $2\sup(\|T_2(t)\|^2+\|T_3(t)\|^2) <\pi^2$. Then $\mathcal T$ does not belong to the degeneracy locus $\mathcal D$. This holds in particular for any solution with $T_2 \equiv 0 \equiv T_3$. 
\end{proposition}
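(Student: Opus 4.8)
The plan is to combine the characterisation of the degeneracy locus from Proposition~\ref{previousprop} with a one-dimensional energy estimate, ruling out nontrivial solutions of the relevant boundary value problem by means of the sharp Poincar\'e inequality on $[0,1]$.

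First I would reduce to the case $T_0\equiv 0$. By the lemma preceding Proposition~\ref{previousprop}, the locus $\mathcal D$ is preserved by the full gauge group $\mathcal G$, and a gauge transformation acts on $T_2,T_3$ by pointwise conjugation, hence leaves $\|T_2(t)\|$ and $\|T_3(t)\|$ unchanged since the inner product is $\mathrm{Ad}$-invariant. Thus both the hypothesis and the conclusion are gauge-invariant. Choosing $u\in\mathcal G$ with $u(0)=1_G$ and $uT_0u^{-1}-\dot u u^{-1}=0$, I may therefore assume $T_0\equiv 0$, in which case the operator appearing in Proposition~\ref{previousprop} reduces (recall $\eta_{11}=1$, $\eta_{22}=\eta_{33}=-1$) to
$$\xi\mapsto \ddot\xi+[T_1,[T_1,\xi]]-[T_2,[T_2,\xi]]-[T_3,[T_3,\xi]].$$

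Now suppose, for contradiction, that $\mathcal T\in\mathcal D$, so that by Proposition~\ref{previousprop} there is a nonzero $\xi\in\mathrm{Lie}(\mathcal G_{00})$ annihilated by this operator. Pairing the equation with $\xi$, integrating over $[0,1]$, integrating by parts in the first term (the boundary contributions vanish because $\xi(0)=\xi(1)=0$), and using $\langle[T_i,Z],W\rangle=-\langle Z,[T_i,W]\rangle$ on the bracket terms, I obtain the energy identity
\[
\int_0^1\|\dot\xi\|^2\,\mathrm dt+\int_0^1\|[T_1,\xi]\|^2\,\mathrm dt=\int_0^1\|[T_2,\xi]\|^2\,\mathrm dt+\int_0^1\|[T_3,\xi]\|^2\,\mathrm dt .
\]
Discarding the nonnegative term involving $T_1$ and estimating each commutator pointwise through a bound of the form $\|[X,Y]\|^2\le C\,\|X\|^2\,\|Y\|^2$ yields $\int_0^1\|\dot\xi\|^2\,\mathrm dt\le C\,\sup_t(\|T_2(t)\|^2+\|T_3(t)\|^2)\int_0^1\|\xi\|^2\,\mathrm dt$. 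Finally I would invoke the Poincar\'e (Wirtinger) inequality $\pi^2\int_0^1\|\xi\|^2\,\mathrm dt\le\int_0^1\|\dot\xi\|^2\,\mathrm dt$, which holds with sharp constant $\pi^2$ for $\xi$ vanishing at both endpoints of $[0,1]$. For $\xi\neq 0$ this forces $\pi^2\le C\,\sup_t(\|T_2(t)\|^2+\|T_3(t)\|^2)$, contradicting the hypothesis once the commutator constant satisfies $C\le 2$; the factor $2$ in the statement is precisely what provides this room. The special case $T_2\equiv 0\equiv T_3$ is immediate and constant-free: the right-hand side of the energy identity then vanishes, forcing $\dot\xi\equiv 0$ and hence $\xi\equiv 0$.

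The main obstacle I anticipate is pinning down the pointwise commutator estimate with a numerical constant that is compatible with the sharp Poincar\'e constant $\pi^2$ (for $\su(2)$ with an orthonormal basis one even has $C=1$, so that already $\sup_t(\|T_2\|^2+\|T_3\|^2)<\pi^2$ would suffice); everything else is a routine integration by parts together with the invariance of the inner product and the boundary conditions defining $\mathrm{Lie}(\mathcal G_{00})$.
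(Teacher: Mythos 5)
Your proposal is correct, and it reaches the conclusion by a genuinely different final step than the paper. Both arguments share the same skeleton: gauge away $T_0$ (legitimate, since $\mathcal D$ is $\mathcal G$-invariant and the hypothesis only involves $\|T_2\|,\|T_3\|$, which are conjugation-invariant), reduce via Proposition~\ref{previousprop} to the boundary value problem $\ddot\xi+\bigl(\ad(T_1)^2-\ad(T_2)^2-\ad(T_3)^2\bigr)\xi=0$ with $\xi(0)=\xi(1)=0$, and control the zeroth-order term by the same pointwise bound $\|[\xi,\eta]\|^2\le 2\|\xi\|^2\|\eta\|^2$ --- so your worry about the commutator constant is moot: the paper asserts exactly this inequality with $C=2$, which is what produces the factor $2$ in the hypothesis. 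Where you diverge is the endgame: you pair the equation with $\xi$, integrate by parts to get the energy identity $\int\|\dot\xi\|^2+\int\|[T_1,\xi]\|^2=\int\|[T_2,\xi]\|^2+\int\|[T_3,\xi]\|^2$, discard the favourable $T_1$-term, and invoke the sharp Dirichlet Poincar\'e inequality $\pi^2\int_0^1\|\xi\|^2\le\int_0^1\|\dot\xi\|^2$ to force $\xi\equiv 0$. The paper instead works pointwise: it sets $f=\|\xi\|^2$, derives a differential inequality of the form $\ddot f+Mf\ge 0$, and runs a Sturm-type comparison against $\sin(\sqrt{M}t)$ to show $\xi$ cannot vanish again on $(0,1]$. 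Your variational route is cleaner and more robust (it uses only the $L^2$-eigenvalue of $-\mathrm d^2/\mathrm dt^2$, with no need to track the sign of $\dot f(0)$ or the differentiability of $f/g$), and it makes the case $T_2\equiv T_3\equiv 0$ completely transparent; the paper's pointwise comparison is closer in spirit to the Sturm separation arguments it deploys later for the explicit $\su(2)$ solutions, and yields the slightly finer statement that $\|\xi\|^2$ has no second zero. Both methods produce the same threshold $\pi^2$.
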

\begin{proof}
Since the degeneracy locus is invariant under the action of $\mathcal G$, we may restrict attention to solutions such that $T_0\equiv 0$. A solution $\mathcal T$ lies in the degeneracy locus if and only if there is a nontrivial solution $\xi\in\mathrm{Lie}(\mathcal G_{00})$ to the linear boundary value problem 
\begin{equation}\label{degen}\ddot \xi + \left(\mathrm{ad}(T_1)^2-\mathrm{ad}(T_2)^2-\mathrm{ad}(T_3)^2\right)(\xi) =0, \qquad \xi(0) = 0 = \xi(1).\end{equation}
Let us write this ODE as $\ddot \xi + A\xi = 0$. 
Since $\mathrm{ad}(T_i)$ is skew-symmetric with respect to the invariant inner product, we have for any $\xi\in C^0([0,1],\gf)$
$$\langle A\xi,\xi\rangle = -\|[T_1,\xi]\|^2 + \|[T_2,\xi]\|^2+\|[T_3,\xi]\|^2 \leq\|[T_2,\xi]\|^2+ \|[T_3,\xi]\|^2.$$ 
Using the pointwise inequality $\|[\xi,\eta]\|^2\leq 2\|\xi\|^2\|\eta\|^2$, this implies
$$\langle A\xi,\xi\rangle\leq 2\sup(\|T_2(t)\|^2+\|T_3(t)\|^2)\|\xi\|^2.$$
Setting $M:=2\sup(\|T_2(t)\|^2+\|T_3(t)\|^2)$, we can conclude that $f(t)=\|\xi\|^2$ satisfies the differential inequality $\ddot f +Mf\geq 0$. Let
$g(t) := \frac{\dot f(0)}{\sqrt{M}}\sin(\sqrt{M}t)$ for $t\in U$, which solves $\ddot g + Mg =0$ with initial condition $g(0) =0$, $\dot g(0)= \dot f(0)$. Consider the function $\phi(t)=f(t)/g(t)$. Provided that $M <\pi^2$, $\phi$ is well-defined and  differentiable on $[0,1]$. The inequality $\ddot f +Mf\geq 0$ and the equality $\ddot g + Mg =0$ imply that  
$\ddot fg-\ddot gf\geq 0$ on $[0,1]$ and integrating this inequality shows that $\phi$ is monotonically increasing there. Since $\phi(0)=1$, $\phi$, and hence $\xi$, cannot have another zero on $(0,1]$. Thus, if $M <\pi^2$, then the boundary value problem $\ddot \xi + A\xi=0$, $\xi(0)=\xi(1)=0$  has only the trivial solution $\xi\equiv 0$. 
\end{proof}

On the other hand we have

\begin{proposition}\label{Prop:DegLoc} Let $\mathcal T=(T_0,T_1,T_2,T_3)$ be a non-constant solution of the Nahm--Schmid equations on $[0,1]$ such that, for some $A\in {\rm SO}(1,2)$, one of the components $\tilde T_i=(A\mathcal T)_i$, $i=1,2,3$, vanishes at $t=0$ and $t=1$. Then $\mathcal T$ belongs to the degeneracy locus $\mathcal D$.\label{TD} 
\end{proposition}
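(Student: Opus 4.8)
The plan is to certify membership in $\mathcal D$ by producing, under the stated hypotheses, an \emph{explicit} nontrivial solution of the degeneracy boundary value problem, using the characterisation of $\mathcal D$ from Proposition~\ref{previousprop}. First I would exploit the invariance of $\mathcal D$ under both $\mathrm{SO}(1,2)$ and the full gauge group $\mathcal G$, established above, in order to normalise $\mathcal T$. Applying the element $A$ replaces $\mathcal T$ by $A\mathcal T$, a solution of \eqref{NahmSchmidUNred} one of whose spatial components $\tilde T_i$ vanishes at $t=0$ and $t=1$, while $T_0$ is left unchanged. I would then gauge $T_0$ away by a transformation $u\in\mathcal G$ with $u(0)=1_G$; since the spatial components are merely conjugated and conjugation fixes $0$, the component $u\tilde T_i u^{-1}$ still vanishes at both endpoints, and the resulting solution satisfies the reduced equations \eqref{NahmSchmidred}. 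After this normalisation we have $T_0\equiv 0$ together with a distinguished component, still denoted $T_i$, satisfying $T_i(0)=T_i(1)=0$, and membership in $\mathcal D$ becomes equivalent to finding a nontrivial solution of \eqref{degen}, i.e.\ $\ddot\xi + \bigl(\ad(T_1)^2-\ad(T_2)^2-\ad(T_3)^2\bigr)\xi=0$ with Dirichlet boundary values.

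The heart of the argument is the observation that $\xi:=T_i$ itself solves this equation. I would verify this by differentiating \eqref{NahmSchmidred} a second time: for instance $\ddot T_3 = [\dot T_1,T_2]+[T_1,\dot T_2] = \ad(T_2)^2 T_3 - \ad(T_1)^2 T_3$, and analogously $\ddot T_1 = \ad(T_2)^2 T_1 + \ad(T_3)^2 T_1$ and $\ddot T_2 = \ad(T_3)^2 T_2 - \ad(T_1)^2 T_2$. Substituting into $\ddot T_i + \ad(T_1)^2 T_i - \ad(T_2)^2 T_i - \ad(T_3)^2 T_i$ and using the trivial identity $\ad(T_i)^2 T_i=[T_i,[T_i,T_i]]=0$, all remaining terms cancel in pairs, so that $\Delta_\mathcal T T_i=0$ for \emph{each} $i\in\{1,2,3\}$ — in particular independently of the sign $\eta_{ii}$ carried by the distinguished index. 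Since each $T_i$ is smooth and the distinguished one vanishes at both endpoints, $\xi=T_i$ lies in $\mathrm{Lie}(\mathcal G_{00})$ and is precisely the solution of \eqref{degen} we seek.

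It then remains to check that this solution is nontrivial, which is the only point where the hypothesis that $\mathcal T$ be non-constant is used. I would argue contrapositively: if the distinguished component vanished identically, say $T_3\equiv 0$, then \eqref{NahmSchmidred} forces $\dot T_1=-[T_2,T_3]=0$ and $\dot T_2=[T_3,T_1]=0$, so $T_1,T_2$ are constant, while $\dot T_3=[T_1,T_2]$ together with $T_3\equiv 0$ gives $[T_1,T_2]=0$; hence the normalised solution would be constant. As the normalisation was achieved only through elements of $\mathrm{SO}(1,2)$ and $\mathcal G$, this would force $\mathcal T$ to be (gauge-equivalent to) a constant solution, contrary to hypothesis. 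Therefore $T_i\not\equiv 0$, and $\xi=T_i$ is a nontrivial element of $\ker\Delta_\mathcal T$, so Proposition~\ref{previousprop} yields $\mathcal T\in\mathcal D$.

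I expect the only genuinely delicate point to be bookkeeping rather than analysis: confirming that the combined $\mathrm{SO}(1,2)$- and gauge-normalisation simultaneously achieves $T_0\equiv 0$ and preserves the endpoint-vanishing of the chosen component, and verifying that ``non-constant'' is stable under these symmetries so that the nontriviality step is legitimate. The differential-geometric content — that every $T_i$ lies in the kernel of the Jacobi-type operator $\Delta_\mathcal T$ — is a short computation once the reduced equations are differentiated, and this is what makes the degeneracy manifest.
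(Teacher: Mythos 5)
Your proposal is correct and follows exactly the paper's route: normalise via $\mathrm{SO}(1,2)$ and a gauge transformation so that $T_0\equiv 0$ and the distinguished component vanishes at both endpoints, then observe that $\xi=T_i$ solves the degeneracy equation \eqref{degen}, with non-constancy guaranteeing nontriviality. The paper's own proof is just the two-line version of this (``it is easy to check that $\xi=T_1$ satisfies \eqref{degen}''); your computation of $\ddot T_i$ and the contrapositive argument for nontriviality correctly fill in the details it omits.
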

\begin{proof} Without loss of generality we can assume that $T_0\equiv 0$, $A=1$ and that $T_1$ is not constant and vanishes at $0$ and $1$. It is easy to check that $\xi=T_1$ satisfies \eqref{degen}.
\end{proof}

\begin{example} If $G={\rm SU}(2)$ we can say more about equation \eqref{linearop} and the degeneracy locus. For a solution in the standard form $ T_0=0,\enskip T_j(t) = f_j(t)e_j$, $j=1,2,3$, we have $(\ad T_j)^2(\xi)=-f_j^2\langle \xi, e_j\rangle e_j$ and hence equation \eqref{degen} is diagonal in the standard basis of $\su(2)$: 
\begin{eqnarray*} \ddot\xi_1 & =& -(f_2^2+f_3^2)\xi_1\\
\ddot\xi_2 & =& (f_1^2-f_3^2)\xi_2\\
\ddot\xi_3 & =& (f_1^2-f_2^2)\xi_3.
\end{eqnarray*}
where $\xi=\sum_{i=1}^3 \xi_i\sigma_i$.  Using \eqref{rels} we can rewrite these equations as
\begin{eqnarray*}\ddot\xi_1 = & (2f_1^2 -a^2-\kappa^2 a^2) &= a^2\bigl(2\kappa^2 {\rm sn}^2_\kappa(at+b)-1-\kappa^2\bigr)\xi_1 \\
\ddot\xi_2= & (2f_1^2-a^2)\xi_2&= a^2\bigl(2\kappa^2 {\rm sn}^2_\kappa(at+b) -1\bigr)\xi_2 \\ \ddot\xi_3=& (2f_1^2-\kappa^2a^2)\xi_3 &= 
a^2\kappa^2\bigl(2\, {\rm sn}^2_\kappa(at+b) -1\bigr)\xi_3.\end{eqnarray*}
These equations have particular solutions $\xi_1={\rm sn}_\kappa(at+b)$, $\xi_2={\rm cn}_\kappa(at+b)$ and $\xi_3={\rm dn}_\kappa(at+b)$ (cf. Proposition \ref{TD}). Since ${\rm dn}_\kappa$ does not vanish on the real line, the Sturm Separation Theorem \cite[Cor. XI.3.1]{Hart} implies that the third equation cannot have a nontrivial solution vanishing at two points. On the other hand ${\rm sn}_\kappa$ (resp. ${\rm cn}_\kappa$) vanishes at points $2mK$ (resp. $(2m+1)K$), $m\in\mathbb{Z}$. Thus a solution $T$ with $b\in K\mathbb{Z}$, $a\in 2K\mathbb{Z}$ belongs to the degeneracy locus $\mathcal D$, while, owing again to the Sturm Separation Theorem, any solution with $[a,a+b]$ properly contained in $[mK,(m+1)K]$ does not belong to $\mathcal D$.
\end{example}

\begin{corollary}
For any non-abelian compact Lie algebra $\gf$, the degeneracy locus $\mathcal D\subset \mathcal M$ is non-empty. 
\end{corollary}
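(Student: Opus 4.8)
The plan is to reduce the general case to the $\su(2)$-computation already carried out in the Example, using the elementary fact that every non-abelian compact Lie algebra contains a copy of $\su(2)$. Writing $\gf = \mathfrak{z}(\gf) \oplus [\gf,\gf]$, the hypothesis that $\gf$ is non-abelian forces the semisimple ideal $[\gf,\gf]$ to be non-zero, and any non-zero compact semisimple Lie algebra contains a three-dimensional simple subalgebra isomorphic to $\su(2)$ (for instance the compact real form of the $\mathfrak{sl}_2$-triple attached to any root). Fix such a Lie algebra embedding $\iota \colon \su(2) \hookrightarrow \gf$.

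Inside this $\su(2)$ I would then write down a non-constant solution one of whose components vanishes at both ends of $U = [0,1]$, exactly as in Subsection~\ref{explicit}. Take $T_0 = 0$ and $T_j = f_j e_j$ with $f_1(t) = a\kappa\,\mathrm{sn}_\kappa(at+b)$, $f_2(t) = a\kappa\,\mathrm{cn}_\kappa(at+b)$, $f_3(t) = -a\,\mathrm{dn}_\kappa(at+b)$, and choose the parameters as in the Example, say $\kappa \in (0,1)$, $b = 0$ and $a = 2\mathrm{K}(\kappa)$. Since the zeros of $\mathrm{sn}_\kappa$ occur precisely at the points $2m\mathrm{K}(\kappa)$, this makes $f_1$ vanish at $t = 0$ and $t = 1$, while the solution stays non-constant because $\mathrm{sn}_\kappa$ is non-constant for $\kappa \in (0,1)$. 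As $\iota$ is a Lie algebra homomorphism, the push-forward $(\iota T_1, \iota T_2, \iota T_3)$ is a non-constant $\gf$-valued solution of the Nahm--Schmid equations whose first component vanishes at both endpoints.

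At this stage Proposition~\ref{TD} applies with $A = 1$ and yields immediately that this solution lies in the degeneracy locus; concretely, $\xi = \iota(T_1) \in \mathrm{Lie}(\mathcal{G}_{00})$ solves the boundary value problem~\eqref{degen}, which is an identity coming from the reduced equations and the Jacobi identity. Since $\mathcal{D}$ is $\mathcal{G}$-invariant and therefore descends to a subset of $\mathcal{M}$, this proves that the degeneracy locus $\mathcal{D} \subset \mathcal{M}$ is non-empty.

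I do not expect a serious obstacle here; the only point worth checking carefully is that the degeneracy condition of Proposition~\ref{previousprop} is posed on the full space of $\gf$-valued test functions $\mathrm{Lie}(\mathcal{G}_{00})$ rather than on the $\su(2)$-valued ones. This causes no difficulty, because exhibiting a point of $\mathcal{D}$ only requires a single non-trivial element in the kernel of the operator $\Delta_\mathcal{T}$ of~\eqref{linearop}, and $\xi = \iota(T_1)$ supplies one: the brackets $[T_i,\xi]$ and $[T_i,[T_i,\xi]]$ remain inside the subalgebra $\iota(\su(2))$, so that~\eqref{degen} for the $\gf$-valued problem restricts to precisely the $\su(2)$-identity already verified in the Example.
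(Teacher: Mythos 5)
Your proposal is correct and follows essentially the same route as the paper: embed $\su(2)$ into the non-abelian $\gf$, take an $\su(2)$-valued solution with $T_1$ non-constant and vanishing at $t=0,1$ (your explicit choice $b=0$, $a=2\mathrm{K}(\kappa)$ matches the Example), push it forward, and invoke Proposition~\ref{Prop:DegLoc}. Your closing remark that a single non-trivial kernel element of $\Delta_{\mathcal T}$, lying inside the image of $\iota$, suffices for the $\gf$-valued degeneracy condition is a worthwhile point that the paper leaves implicit.
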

\begin{proof}
Let $\rho:\su(2)\to\gf$ be a (non-trivial) Lie algebra homomorphism and let $(T_0,T_1,T_2,T_3)$ be an $\su(2)$-valued solution to the  Nahm--Schmid equations such that $T_1$ is not constant and vanishes at $t=0,1$. Then  $(\rho(T_0),\rho(T_1),\rho(T_2),\rho(T_3))\in\mathcal D$ by Proposition \ref{Prop:DegLoc}.
\end{proof}

\subsection{Complex structures\label{complex}}
Denote by $\gf^\cx=\gf\otimes \cx$ the complexification of $\gf$.
We may identify the complex manifold $(\mathcal A,I)$ with $C^1(U,\gf^\mathbb C))\otimes \mathbb C^2$ via $$\mathcal T = (T_0,T_1,T_2,T_3)\mapsto (\alpha,\beta), \qquad \text{where } \; \alpha := T_0-{\rm i}T_1, \beta := T_2+{\rm i}T_3.$$ Writing a tangent vector to $C^1(U,\gf^\mathbb C)\otimes \mathbb C^2$ as $(a,b)$, the holomorphic symplectic form $\omega_I^\mathbb C = \omega_S+{\rm i}\omega_T$ is given by
$$\omega_I^\mathbb C((a_1,b_1),(a_2,b_2)) = \int_0^1\langle b_1,a_2\rangle - \langle a_1,b_2\rangle\mathrm dt.$$
The vanishing of the hypersymplectic moment map is the same as saying that the complex equation $\mu^\mathbb C:=\mu_S+{\rm i}\mu_T = 0$ and the real equation $\mu_I =0$ are satisfied simultaneously. In the case of the Nahm--Schmid equations, this gives respectively
\begin{eqnarray}\label{cx}
 \dot\beta + [\alpha,\beta] &=& 0, \\
 \label{re}\dot\alpha + \dot\alpha^\ast  + [\alpha,\alpha^\ast ] - [\beta,\beta^\ast ] &=& 0.
\end{eqnarray}

Let $G^\cx$ be the complexification of $G$, i.e. $G^\cx$ is a complex Lie group with Lie algebra $\gf^\cx$ and maximal compact subgroup $G$.
The complex equation is invariant under the action of the \emph{complexified gauge group} 
$$\mathcal G^\mathbb C = C^2(U,G^\cx)$$
and its normal subgroup
$$\mathcal G_{00}^\mathbb C := \{u\in\mathcal  G^\mathbb C) \ |\ u(0) = \mathbf{1}_{G^\cx} = u(1)\}$$
acting by
$$u.\alpha = u\alpha u^{-1}  -\dot u u^{-1}, \qquad u.\beta = u\beta u^{-1}.$$
A computation similar to the one in Proposition \ref{mmaps} shows that the complex equation is the vanishing condition for the moment map of the action of $\mathcal G_{00}^\mathbb C$ with respect to $\omega_I^\mathbb C$.

We write the moduli space of solutions to the complex equation modulo $\mathcal G_{00}^\mathbb C$ as
$$\mathcal N := \{(\alpha,\beta): U\to \gf^\C\times \gf^\C \ |\ \dot \beta + [\alpha, \beta]  =0\}/\mathcal G_{00}^\C.$$
One can show that this is a smooth Banach manifold.
Just like in the proof of Theorem \ref{MGg3} we obtain a diffeomorphism
\begin{eqnarray*}
\Phi: \mathcal N &\to& G^\cx \times \gf^\C \cong {\rm T}^\ast G^\C, \\
(\alpha,\beta)  &\mapsto& (u_0(1), \beta(0)),
\end{eqnarray*}
where $u_0\in\mathcal G^\mathbb C$ is the unique complex gauge transformation with $\alpha = -\dot u_0u_0^{-1}$ such that $u_0(0) = 1_{G^\mathbb C}$. The space $\mathcal N$ is in a natural way a complex symplectic quotient, and it can be checked that $\Phi$ is holomorphic and pulls back the canonical symplectic form on ${\rm T}^\ast G^\mathbb C$  to the symplectic form $\omega_I^\mathbb C$.

We obviously have a natural map $\mathcal M\to \mathcal N$, since any solution to the Nahm--Schmid equations gives a solution to the complex equation. 
The question to ask at this point is the following: Given a solution to the complex equation $\dot \beta+ [\alpha, \beta] = 0$, does its $\mathcal G_{00}^\C$-orbit contain a solution to the real equation, and to what extent is this solution unique?

Using work of Donaldson \cite{Donaldson:1984a}, Kronheimer answers this question affirmatively for the usual Nahm equations by showing that every $\mathcal G_{00}^\mathbb C$-orbit of a solution to the complex equation contains a unique $\mathcal G_{00}$-orbit of solutions to the real equation. In the hyperk\"ahler case, it thus turns out that the map $\mathcal{M}\rightarrow \mathcal{N}$ is a bijection, inducing therefore a hyperk\"ahler structure on ${\rm T}^\ast G^\C$. 

We think of $\gf$ as a subalgebra of $\un(n)$ for some $n\in\mathbb N$. Let us write the real equation as $\mu_I(\alpha, \beta) = 0$. We now aim at describing how this equation behaves under complex gauge transformations. 

To simplify calculations, we define operators on $C^\infty([0,1], \gf^\C)$ by  
\begin{eqnarray*}
\bar\partial_\alpha = \frac{\mathrm d}{\mathrm dt} + [\alpha,\cdot], &\quad &
\partial_\alpha =\frac{\mathrm d}{\mathrm dt} - [\alpha^\ast ,\cdot],\\
\bar\partial_\beta =  [\beta,\cdot], &\quad &
\partial_\beta = -[\beta^\ast ,\cdot].
\end{eqnarray*}
The following two lemmas are then obtained by straightforward calculations.
\begin{lemma}
$$\mu_I(\alpha,\beta) = [\partial_\alpha,\bar\partial_\alpha] -[\partial_\beta, \bar\partial_\beta] ,$$
as operators on $C^\infty([0,1],\mathbb C^n)$.
\end{lemma}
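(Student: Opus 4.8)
The plan is to prove the identity by a direct expansion of the two operator commutators, so the first thing I would do is fix the interpretation of the four operators as acting on $\mathbb C^n$-valued functions. Viewing $\gf^\C$ as a subalgebra of $\Mat_n(\C)$ and letting matrices act on a function $v\colon [0,1]\to\mathbb C^n$ by left multiplication, the bracket $[\,\cdot\,,\,\cdot\,]$ in the definitions is replaced by this defining action, so that $\bar\partial_\alpha = \frac{\mathrm d}{\mathrm dt} + \alpha$, $\partial_\alpha = \frac{\mathrm d}{\mathrm dt} - \alpha^\ast$, $\bar\partial_\beta = \beta$ and $\partial_\beta = -\beta^\ast$. (This passage from the adjoint action on the adjoint bundle to the defining action on $\mathbb C^n$ is the one genuinely conceptual point, and I would state it explicitly at the outset.) The only analytic input required is the Leibniz rule in the guise of the commutator identity $[\frac{\mathrm d}{\mathrm dt}, M] = \dot M$ for multiplication by a matrix-valued function $M$, together with the observation that two order-zero multiplication operators commute into the matrix commutator.

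Next I would expand $[\partial_\alpha,\bar\partial_\alpha] = [\frac{\mathrm d}{\mathrm dt} - \alpha^\ast, \frac{\mathrm d}{\mathrm dt} + \alpha]$ by bilinearity into four pieces. The purely second-order term $[\frac{\mathrm d}{\mathrm dt},\frac{\mathrm d}{\mathrm dt}]$ vanishes; the two mixed commutators collapse, via $[\frac{\mathrm d}{\mathrm dt},\alpha] = \dot\alpha$ and $-[\alpha^\ast,\frac{\mathrm d}{\mathrm dt}] = \dot\alpha^\ast$, to the order-zero operators of multiplication by $\dot\alpha$ and $\dot\alpha^\ast$ (the genuine first-order pieces cancel inside each individual commutator); and the remaining term $-[\alpha^\ast,\alpha]$ is multiplication by $[\alpha,\alpha^\ast]$. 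Thus $[\partial_\alpha,\bar\partial_\alpha]$ is multiplication by $\dot\alpha + \dot\alpha^\ast + [\alpha,\alpha^\ast]$. The companion computation is immediate: since $\partial_\beta$ and $\bar\partial_\beta$ are both order zero, $[\partial_\beta,\bar\partial_\beta] = [-\beta^\ast,\beta]$ is multiplication by $[\beta,\beta^\ast]$.

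Subtracting, $[\partial_\alpha,\bar\partial_\alpha] - [\partial_\beta,\bar\partial_\beta]$ is multiplication by $\dot\alpha + \dot\alpha^\ast + [\alpha,\alpha^\ast] - [\beta,\beta^\ast]$, which is exactly the left-hand side of the real equation \eqref{re}, namely $\mu_I(\alpha,\beta)$, finishing the proof. I do not expect a serious obstacle: unlike the moment-map computation of Proposition \ref{mmaps}, this identity is purely algebraic, involving no integration by parts and no boundary conditions, so it holds as an equality of operators on the nose. The only points that demand care are bookkeeping --- tracking the signs built into $\partial_\alpha = \frac{\mathrm d}{\mathrm dt} - \alpha^\ast$ and $\partial_\beta = -\beta^\ast$, which are precisely what make the adjoint terms assemble with the correct signs, and confirming that $\alpha^\ast,\beta^\ast$ are the Hermitian adjoints coming from $\gf\subset\un(n)$ so that the outcome matches $\mu_I(\alpha,\beta)$ in the normalisation fixed by \eqref{re}.
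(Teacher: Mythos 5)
Your proof is correct and is exactly the ``straightforward calculation'' the paper invokes without writing out: a direct expansion of the two commutators using $[\tfrac{\mathrm d}{\mathrm dt},M]=\dot M$, with the signs in $\partial_\alpha,\partial_\beta$ producing $\dot\alpha+\dot\alpha^\ast+[\alpha,\alpha^\ast]-[\beta,\beta^\ast]$, i.e.\ the left-hand side of \eqref{re}. Your explicit remark that on $C^\infty([0,1],\mathbb C^n)$ the adjoint-action brackets in the definitions of $\bar\partial_\alpha,\partial_\alpha,\bar\partial_\beta,\partial_\beta$ must be read as left multiplication in the defining representation is a point the paper leaves implicit, and is the right way to make the statement typecheck.
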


\begin{lemma}
Let $u\in \mathcal G^\mathbb C$ be a complex gauge transformation. Then
\begin{eqnarray*}
\bar\partial_{u.\alpha} = u\circ \bar\partial_\alpha \circ u^{-1} & \quad &
\partial_{u.\alpha} = (u^\ast)^{-1} \circ \partial_\alpha \circ u^\ast \\
\bar\partial_{u.\beta} = u\circ \bar\partial_\beta \circ u^{-1} &\quad &
\partial_{u.\beta} = (u^\ast)^{-1} \circ \partial_\beta \circ u^\ast.
\end{eqnarray*}
\end{lemma}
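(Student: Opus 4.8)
The four identities are proved by direct computation, and I find it cleanest to read all the operators as acting on $C^\infty([0,1],\C^n)$, so that $\alpha,\beta,\alpha^\ast,\beta^\ast$ act by pointwise matrix multiplication and $u,u^{-1},u^\ast$ act by pointwise left multiplication; this is the same convention under which the preceding lemma holds. I would begin with the two $\bar\partial$-identities. For a test function $v$ one has $u\circ\bar\partial_\alpha\circ u^{-1}(v) = u\bigl(\tfrac{\mathrm d}{\mathrm dt}(u^{-1}v) + \alpha u^{-1}v\bigr)$, and applying the Leibniz rule together with $\tfrac{\mathrm d}{\mathrm dt}u^{-1} = -u^{-1}\dot u\,u^{-1}$ the right-hand side collapses to $\dot v + (u\alpha u^{-1} - \dot u u^{-1})v = \bar\partial_{u.\alpha}v$. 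The whole point is that the inhomogeneous term $-\dot u u^{-1}$ in the definition of $u.\alpha$ is precisely the term generated when $\tfrac{\mathrm d}{\mathrm dt}$ differentiates the inner $u^{-1}$; this single mechanism is the entire content of the covariance.

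For the $\partial$-identities the conjugating factor is $u^\ast$ rather than $u$, because $\partial_\alpha = \tfrac{\mathrm d}{\mathrm dt} - [\alpha^\ast,\cdot]$ is built from $\alpha^\ast$. I would first record how $\ast$ interacts with the gauge action: since $\ast$ is an anti-involution commuting with $\tfrac{\mathrm d}{\mathrm dt}$ and satisfies $(u^{-1})^\ast = (u^\ast)^{-1}$, taking adjoints of $u.\alpha = u\alpha u^{-1} - \dot u u^{-1}$ gives $(u.\alpha)^\ast = (u^\ast)^{-1}\alpha^\ast u^\ast - (u^\ast)^{-1}\dot{u}^\ast$. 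Feeding this into $\partial_{u.\alpha}v = \dot v - (u.\alpha)^\ast v$ and comparing with $(u^\ast)^{-1}\circ\partial_\alpha\circ u^\ast(v)$ --- which I expand exactly as before, now with $u^\ast$ playing the role of $u$ and using $\tfrac{\mathrm d}{\mathrm dt}u^\ast = \dot{u}^\ast$ --- the two sides agree, the term $-(u^\ast)^{-1}\dot{u}^\ast$ again being accounted for by the derivative hitting $u^\ast$.

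The $\beta$-identities are immediate, since $\bar\partial_\beta = [\beta,\cdot]$ and $\partial_\beta = -[\beta^\ast,\cdot]$ carry no differentiation: they reduce to the purely algebraic relations $u.\beta = u\beta u^{-1}$ and $(u.\beta)^\ast = (u^\ast)^{-1}\beta^\ast u^\ast$, which give $\bar\partial_{u.\beta} = u\circ\bar\partial_\beta\circ u^{-1}$ and $\partial_{u.\beta} = (u^\ast)^{-1}\circ\partial_\beta\circ u^\ast$ directly. There is no genuine obstacle in the argument; it is entirely mechanical. The only points demanding care are bookkeeping ones: differentiating $u^{-1}$ and $u^\ast$ correctly, and keeping track of why the operators $\bar\partial$ conjugate by $u$ while the operators $\partial$ conjugate by $(u^\ast)^{-1}$ --- an asymmetry reflecting the fact that $\alpha$ transforms affinely under $\mathrm{Ad}(u)$ whereas $\alpha^\ast$ transforms under $\mathrm{Ad}((u^\ast)^{-1})$. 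These transformation laws are precisely the gauge covariance needed afterwards to follow the real equation $\mu_I=0$ along a $\mathcal G_{00}^\mathbb C$-orbit.
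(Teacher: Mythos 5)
Your computation is correct and is exactly the "straightforward calculation" the paper has in mind (it omits the proof entirely, merely asserting the lemma follows by direct computation): the inhomogeneous term $-\dot u u^{-1}$ in $u.\alpha$ is produced by the derivative hitting the inner $u^{-1}$ (resp.\ $u^\ast$), and the $\beta$-identities are purely algebraic. Your explicit choice to read the operators on $C^\infty([0,1],\C^n)$ with pointwise multiplication, consistent with the preceding lemma, and your remark on why $\bar\partial$ conjugates by $u$ while $\partial$ conjugates by $(u^\ast)^{-1}$ are both apt.
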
 

Using this description of the real moment map, it is now easy to work out the behaviour of the real equation under complex gauge transformations. 
\begin{lemma}
Let $u \in \mathcal G^\mathbb C$ be a complex gauge transformation, and put $h := u^\ast u$. Then
$$u^{-1}\left(\mu_I(u.\alpha, u.\beta)\right)u = \mu_I(\alpha, \beta) - \bar\partial_\alpha( h^{-1}(\partial_\alpha h)) + \bar\partial_\beta( h^{-1}(\partial_\beta h)). $$
\end{lemma}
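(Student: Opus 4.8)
The plan is to derive the formula purely at the level of operators, starting from the two preceding lemmas. First I would apply the first lemma to the transformed pair, writing
\[
\mu_I(u.\alpha,u.\beta)=[\partial_{u.\alpha},\bar\partial_{u.\alpha}]-[\partial_{u.\beta},\bar\partial_{u.\beta}],
\]
and then substitute the conjugation formulas of the second lemma. Conjugating the whole expression by $u^{-1}$ and repeatedly using $u^\ast u=h$ (so that $u^{-1}(u^\ast)^{-1}=(u^\ast u)^{-1}=h^{-1}$), all the factors $u^{\pm1},(u^\ast)^{\pm1}$ collapse, leaving
\[
u^{-1}\mu_I(u.\alpha,u.\beta)u=\bigl(h^{-1}\partial_\alpha h\,\bar\partial_\alpha-\bar\partial_\alpha h^{-1}\partial_\alpha h\bigr)-\bigl(h^{-1}\partial_\beta h\,\bar\partial_\beta-\bar\partial_\beta h^{-1}\partial_\beta h\bigr),
\]
where now $h,h^{-1}$ denote the corresponding multiplication operators on $\mathbb C^n$-valued functions.

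The key step is a commutation (Leibniz) identity allowing $\partial_\alpha,\bar\partial_\alpha$ to be moved past a matrix-valued multiplication operator $M$: one checks directly that
\[
\bar\partial_\alpha\circ M=M\circ\bar\partial_\alpha+(\bar\partial_\alpha M),\qquad \partial_\alpha\circ M=M\circ\partial_\alpha+(\partial_\alpha M),
\]
where on the right $\bar\partial_\alpha M=\dot M+[\alpha,M]$ and $\partial_\alpha M=\dot M-[\alpha^\ast,M]$ are again multiplication operators; the purely algebraic analogues hold for $\bar\partial_\beta,\partial_\beta$. Applying these to the $\alpha$-block, I would first rewrite $h^{-1}\partial_\alpha h=\partial_\alpha+h^{-1}(\partial_\alpha h)$ as operators, expand both terms, and observe that the first-order multiplication contributions $h^{-1}(\partial_\alpha h)\,\bar\partial_\alpha$ cancel between the two summands. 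What survives is exactly $[\partial_\alpha,\bar\partial_\alpha]-\bar\partial_\alpha\bigl(h^{-1}(\partial_\alpha h)\bigr)$. The $\beta$-block is handled identically, producing $-[\partial_\beta,\bar\partial_\beta]+\bar\partial_\beta\bigl(h^{-1}(\partial_\beta h)\bigr)$. Summing the two blocks and invoking the first lemma once more to recognise $[\partial_\alpha,\bar\partial_\alpha]-[\partial_\beta,\bar\partial_\beta]=\mu_I(\alpha,\beta)$ yields the asserted identity.

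The only real subtlety — and the place where a sign or ordering error would creep in — is keeping straight the two actions in play: the operators act on $\mathbb C^n$-valued functions by matrix multiplication, but when commuted past a matrix-valued multiplication operator they produce the adjoint (bracket) action, which is precisely the form in which $\bar\partial_\alpha,\partial_\alpha,\bar\partial_\beta,\partial_\beta$ were defined. I would also note that $h=u^\ast u$ is $\GL(n,\C)$-valued rather than $\gf^\C$-valued, so these operators are being applied to general $\Mat_n(\C)$-valued functions; this is harmless but worth flagging. Careful bookkeeping of the order of operator composition throughout is the main (and essentially the only) obstacle, the rest being the routine cancellation described above.
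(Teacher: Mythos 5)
Your computation is correct and follows exactly the route the paper intends: the lemma is stated without proof precisely because it is the straightforward combination of the two preceding lemmas (the operator formula for $\mu_I$ and the conjugation rules), followed by the Leibniz-type commutation you describe, and your bookkeeping of the cancellation $M\bar\partial_\alpha-\bar\partial_\alpha M=-(\bar\partial_\alpha M)$ for $M=h^{-1}(\partial_\alpha h)$ checks out in both the $\alpha$- and $\beta$-blocks. Your remark that the operators must be allowed to act on general $\Mat_n(\C)$-valued functions (since $h$ is not $\gf^\C$-valued) is a valid and worthwhile clarification.
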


Now given a solution $(\alpha, \beta)$ to the complex equation, we want to find a self-adjoint and positive $h\in\mathcal G_{00}^\C$  solving the boundary value problem 
\begin{equation}\label{h-RealEq}\mu_I(\alpha, \beta) - \bar\partial_\alpha(h^{-1}(\partial_\alpha h)) + \bar\partial_\beta(h^{-1}(\partial_\beta h)) = 0,  \qquad h(0) = 1 = h(1).
\end{equation}
Existence and uniqueness will then imply that the complex gauge transformation $u = h^{1/2}$ takes $(\alpha,\beta)$ to a solution of the real equation. 

\begin{lemma}
Let $\mathcal T = (T_0,T_1,T_2,T_3)$ be a  solution to the Nahm--Schmid equations, i.e. a solution to the complex equation with $\mu_I(\alpha, \beta) = 0$.  Then the linearisation of the boundary value problem
$$- \bar\partial_\alpha(h^{-1}(\partial_\alpha h)) + \bar\partial_\beta(h^{-1}(\partial_\beta h)) = 0$$
is given by the operator $-\Delta_\mathcal T$ in \eqref{linearop}. 
\end{lemma}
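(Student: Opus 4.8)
The plan is to linearise the nonlinear map $F(h):=- \bar\partial_\alpha(h^{-1}(\partial_\alpha h)) + \bar\partial_\beta(h^{-1}(\partial_\beta h))$ at $h=1$, which is the relevant base point because $\mu_I(\alpha,\beta)=0$ makes $h\equiv1$ solve the boundary value problem \eqref{h-RealEq}. I would take a path $h_s$ of positive self-adjoint elements of $\mathcal G_{00}^\C$ with $h_0=1$ and $\frac{\mathrm d}{\mathrm ds}\big|_0 h_s=\psi$; the endpoint conditions $h_s(0)=h_s(1)=1$ force $\psi(0)=\psi(1)=0$, matching the domain $\mathrm{Lie}(\mathcal G_{00})$ of $\Delta_\mathcal T$. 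Since $h_s^{-1}=1-s\psi+O(s^2)$, the essential observation is that $\partial_\alpha(1)=\partial_\beta(1)=0$ (both $\tfrac{\mathrm d}{\mathrm dt}$ and the brackets annihilate the constant $1$), so that $\partial_\alpha h_s=s\,\partial_\alpha\psi+O(s^2)$ and likewise for $\beta$. Thus the factor $h_s^{-1}$ contributes nothing at first order, every nonlinear term of $F$ drops out at $h=1$, and the linearisation $L$ is simply $L(\psi)=-\bar\partial_\alpha\partial_\alpha\psi+\bar\partial_\beta\partial_\beta\psi$. It then remains to identify this composition of first-order operators with $-\Delta_\mathcal T$.

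The second step is a direct expansion of $L(\psi)=-\bar\partial_\alpha\partial_\alpha\psi+\bar\partial_\beta\partial_\beta\psi$. From the definitions one has $-\bar\partial_\alpha\partial_\alpha\psi=-\ddot\psi+[\dot\alpha^\ast,\psi]+[\alpha^\ast-\alpha,\dot\psi]+[\alpha,[\alpha^\ast,\psi]]$ and $\bar\partial_\beta\partial_\beta\psi=-[\beta,[\beta^\ast,\psi]]$. I would then insert $\alpha=T_0-\mathrm iT_1$, $\beta=T_2+\mathrm iT_3$ and the adjoints $\alpha^\ast=-T_0-\mathrm iT_1$, $\beta^\ast=-T_2+\mathrm iT_3$ (recall $T_i^\ast=-T_i$). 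Because $\alpha^\ast-\alpha=-2T_0$, the first-order term becomes $-2[T_0,\dot\psi]$; the term $[\dot\alpha^\ast,\psi]$ yields $-[\dot T_0,\psi]$ together with an imaginary piece $-\mathrm i[\dot T_1,\psi]$; and the two double brackets combine into a real part $-\sum_i\eta_{ii}[T_i,[T_i,\psi]]$, which reproduces the curvature term of \eqref{linearop} exactly thanks to the signature signs $\eta_{22}=\eta_{33}=-1$, plus some purely imaginary cross terms.

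The crux is the fate of these imaginary cross terms, and here the hypothesis that $\mathcal T$ solves the full Nahm--Schmid system enters decisively (rather than merely the complex equation $\dot\beta+[\alpha,\beta]=0$). The cross terms coming from the double brackets are $-\mathrm i[T_0,[T_1,\psi]]+\mathrm i[T_1,[T_0,\psi]]-\mathrm i[T_2,[T_3,\psi]]+\mathrm i[T_3,[T_2,\psi]]$, which the Jacobi identity collapses to $-\mathrm i[[T_0,T_1]+[T_2,T_3],\psi]$. Adding the leftover imaginary piece $-\mathrm i[\dot T_1,\psi]$ gives $-\mathrm i[\dot T_1+[T_0,T_1]+[T_2,T_3],\psi]=\mathrm i[\mu_I(\alpha,\beta),\psi]$, using $\mu_I=-\dot T_1-[T_0,T_1]-[T_2,T_3]$. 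Since $\mu_I(\alpha,\beta)=0$ by hypothesis, the imaginary contributions cancel and $L(\psi)=-\big(\ddot\psi+[\dot T_0,\psi]+2[T_0,\dot\psi]+\sum_i\eta_{ii}[T_i,[T_i,\psi]]\big)=-\Delta_\mathcal T(\psi)$, as claimed. I would close by remarking that the cancellation shows $L$ and $-\Delta_\mathcal T$ agree verbatim as differential operators, so no separate identification between the self-adjoint variation $\psi$ and skew-adjoint elements of $\mathrm{Lie}(\mathcal G_{00})$ is needed; the main obstacle is locating precisely this use of $\mu_I=0$, which is what distinguishes genuine Nahm--Schmid solutions from general solutions of the complex equation.
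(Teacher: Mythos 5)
Your proof is correct and follows essentially the same route as the paper: linearise at $h=1$ (where $\partial_\alpha 1=\partial_\beta 1=0$ kills all nonlinear contributions), expand $-\bar\partial_\alpha\partial_\alpha+\bar\partial_\beta\partial_\beta$ in terms of the $T_i$, and use $\mu_I(\alpha,\beta)=0$ together with the Jacobi identity to cancel the imaginary cross terms. The only cosmetic difference is that the paper parametrises the variation as $h=\exp({\rm i}\xi)$ with $\xi\in\mathrm{Lie}(\mathcal G_{00})$ and inserts a compensating factor of $-{\rm i}$ into the definition of $L$, whereas you use a self-adjoint variation $\psi$ directly and note that both sides are the same $\C$-linear differential operator, so the identification is immediate.
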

\begin{proof}
Suppose that we have a one-parameter family $h: (-\epsilon,\epsilon)\times [0,1]\to G^\C$ of self-adjoint solutions
$$h(s,t) = \exp({\rm i}\xi(s,t)), \qquad h(0,t)= 1,\enskip h(s,0) = 1 = h(s,1)$$
to the boundary value problem (\ref{h-RealEq}), where $\xi(s,t) (-\epsilon,\epsilon)\times [0,1]\to \gf$. 
Write $\xi'$ for the partial derivative of $\xi$ with respect to $s$ at $s=0$. Note that the condition $h(s,0) = 1 = h(s,1)$ implies that $\xi'(0) = 0 = \xi'(1)$, i.e. $\xi'\in \mathrm{Lie}(\mathcal G_{00})$. We compute the linearisation of (\ref{h-RealEq}) for $(\alpha, \beta)$ with $\mu_I(\alpha,\beta) =0$ and denote the linear operator obtained in this way by $L: \mathrm{Lie}(\mathcal G_{00})\to C^0([0,1],\gf)$. Explicitly, 
\[
L\xi' = \left. -{\rm i}\frac{\mathrm d}{\mathrm ds}\right|_{s=0}\left(- \bar\partial_\alpha(h^{-1}(\partial_\alpha h)) + \bar\partial_\beta(h^{-1}(\partial_\beta h))\right).
\]
Using the equation $0=\mu_I(\alpha,\beta) = \dot T_1 + [T_0,T_1]+ [T_2,T_3]$ and the Jacobi identity we see
\begin{eqnarray*}
L\xi'  &=& \left.-{\rm i} \frac{\mathrm d}{\mathrm ds}\right|_{s=0}\left(- \bar\partial_\alpha(h^{-1}(\partial_\alpha h)) + \bar\partial_\beta(h^{-1}(\partial_\beta h))\right)\\
&=& - \bar\partial_\alpha(\partial_\alpha \xi') + \bar\partial_\beta(\partial_\beta \xi')\\
&=& -\left(\frac{\mathrm d}{\mathrm dt} + [\alpha,\cdot]\right)\left(\dot\xi' - [\alpha^*,\xi']\right) - [\beta,[\beta^*,\xi]]\\
&=&  -\ddot\xi' + [\dot\alpha^\ast , \xi'] + [\alpha^\ast ,\dot \xi']  - [\alpha,\dot \xi']  +[\alpha, [\alpha^\ast ,\xi']] - [\beta,[\beta^\ast ,\xi']] \\
&=& -\ddot\xi' - [\dot T_0, \xi'] -{\rm i}[\dot T_1,\xi'] - [T_0 +{\rm i}T_1,\dot\xi']  - [T_0-{\rm i}T_1,\dot\xi']  \\
& &  - [T_0-{\rm i}T_1, [T_0+{\rm i}T_1,\xi']] + [T_2+{\rm i}T_3,[T_2-{\rm i}T_3,\xi']] \\
&=& -\ddot\xi' - [\dot T_0, \xi'] +{\rm i}[[T_0,T_1] + [T_2,T_3],\xi'] - [T_0 +{\rm i}T_1,\dot\xi']  - [T_0-{\rm i}T_1,\dot\xi']  \\
& &  - [T_0-{\rm i}T_1, [T_0+{\rm i}T_1,\xi']] + [T_2+{\rm i}T_3,[T_2-{\rm i}T_3,\xi']] \\
&=& -\ddot\xi' - 2[T_0,\dot \xi'] - [\dot T_0, \xi']  - [T_0, [T_0, \xi']] - [T_1,[T_1,\xi']]  + [T_2,[T_2,\xi']] \\
&& + [T_3,[T_3,\xi']]\\
&=& -\Delta_\mathcal T \xi'.
\end{eqnarray*}
\end{proof}

\begin{theorem}
Let $(\alpha,\beta)$ be a solution to the Nahm--Schmid equations, and assume that $(\alpha,\beta)$ is not contained in the degeneracy locus $\mathcal D$. Then there exists a constant $\epsilon >0$ such that for any solution $(\tilde\alpha,\tilde\beta)$ of the complex equation with $\|\tilde\alpha- \alpha\|_{C^1} + \|\tilde\beta -\beta\|_{C^1} <\epsilon$ there exists a unique $u\in\mathcal G_{00}^\C$ close to the identity which is self-adjoint such that $u.(\tilde\alpha,\tilde\beta)$ solves the real equation. 
\end{theorem}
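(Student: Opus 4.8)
The plan is to solve the boundary value problem~\eqref{h-RealEq} for a self-adjoint positive $h\in\mathcal G_{00}^\C$ near the identity, by viewing $h=\exp(\mathrm i\xi)$ with $\xi\in\mathrm{Lie}(\mathcal G_{00})$ and applying the implicit function theorem in suitable Banach spaces. Concretely, I would define a smooth map
\[
F:\mathcal{U}\times\mathrm{Lie}(\mathcal G_{00})\longrightarrow C^0(U,\gf),\qquad
F\bigl((\tilde\alpha,\tilde\beta),\xi\bigr)= \mu_I(\tilde\alpha,\tilde\beta)-\bar\partial_{\tilde\alpha}\!\bigl(h^{-1}\partial_{\tilde\alpha}h\bigr)+\bar\partial_{\tilde\beta}\!\bigl(h^{-1}\partial_{\tilde\beta}h\bigr),
\]
where $h=\exp(\mathrm i\xi)$ and $\mathcal{U}$ is a $C^1$-neighbourhood of the given solution $(\alpha,\beta)$ in the space of solutions to the complex equation. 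By the previous lemma, $F\bigl((\alpha,\beta),0\bigr)=\mu_I(\alpha,\beta)=0$, and the partial derivative of $F$ with respect to $\xi$ at the point $\bigl((\alpha,\beta),0\bigr)$ is exactly the operator $-\Delta_{\mathcal T}$ from~\eqref{linearop}. Since $(\alpha,\beta)$ is assumed to lie outside the degeneracy locus $\mathcal D$, the preceding proposition guarantees that $\Delta_{\mathcal T}$, and hence the partial differential $\mathrm D_\xi F$, is an isomorphism $\mathrm{Lie}(\mathcal G_{00})\to C^0(U,\gf)$.

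The implicit function theorem in Banach spaces then yields a neighbourhood and a constant $\epsilon>0$ such that for every solution $(\tilde\alpha,\tilde\beta)$ of the complex equation with $\|\tilde\alpha-\alpha\|_{C^1}+\|\tilde\beta-\beta\|_{C^1}<\epsilon$ there is a unique $\xi=\xi(\tilde\alpha,\tilde\beta)\in\mathrm{Lie}(\mathcal G_{00})$, close to $0$, with $F\bigl((\tilde\alpha,\tilde\beta),\xi\bigr)=0$. Setting $h=\exp(\mathrm i\xi)$ and $u=h^{1/2}$ produces a self-adjoint element of $\mathcal G_{00}^\C$ close to the identity. Unwinding the lemma on the behaviour of $\mu_I$ under complex gauge transformations, the vanishing $F=0$ is precisely the statement that $u.(\tilde\alpha,\tilde\beta)$ satisfies the real equation $\mu_I\bigl(u.\tilde\alpha,u.\tilde\beta\bigr)=0$, as desired. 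Uniqueness of $u$ near the identity follows from the local uniqueness of $\xi$ in the implicit function theorem, together with the fact that $u\mapsto u^\ast u$ is a diffeomorphism from self-adjoint gauge transformations near $\mathbf 1$ onto positive self-adjoint ones.

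The two points requiring care are the smoothness of $F$ and the mapping properties of the Laplace-type operator. One must check that $F$ genuinely maps into $C^0(U,\gf)$ and depends smoothly on both arguments; this is where the regularity bookkeeping lives, since $h^{-1}\partial_\alpha h$ involves one derivative of $\xi$ and the outer operator $\bar\partial_\alpha$ involves another, so that $F$ costs two derivatives and the natural domain is $C^2\cap\mathrm{Lie}(\mathcal G_{00})$ mapping to $C^0$. I expect the main obstacle to be precisely the verification that $\mathrm D_\xi F$ at the base point equals $-\Delta_{\mathcal T}$ on the nose — but this identification has already been carried out in the lemma immediately preceding, so in the present proof it can be invoked directly. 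With that in hand, the argument is a clean application of the implicit function theorem, entirely parallel to Kronheimer's treatment of the hyperk\"ahler case, and no global continuity method is needed because we only claim a \emph{local} statement near a fixed non-degenerate solution.
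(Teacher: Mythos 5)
Your proposal is correct and follows essentially the same route as the paper: both define the map $\mathcal F(\xi,(\tilde\alpha,\tilde\beta))=\mu_I(\tilde\alpha,\tilde\beta)-\bar\partial(h^{-1}\partial h)+\dots$ with $h=\exp(\mathrm i\xi)$, identify its partial derivative in the $\xi$-direction at $(0,(\alpha,\beta))$ with $\pm\Delta_{\mathcal T}$ via the preceding lemma, invoke non-degeneracy to get an isomorphism, and conclude by the implicit function theorem. Your additional remarks on taking $u=h^{1/2}$ and on the $C^2\to C^0$ regularity bookkeeping are consistent with (and slightly more explicit than) the paper's one-line treatment.
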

\begin{proof}
Consider the map 
$$\mathcal F: \mathrm{Lie}(\mathcal G_{00})\times \mathcal A \to C^0(U,\gf)$$
given by 
$$\mathcal F(\xi,(\tilde\alpha,\tilde\beta)) = \mu_I(\tilde\alpha,\tilde\beta)- \bar\partial_\alpha({\rm e}^{{\rm i}\xi}(\partial_\alpha {\rm e}^{-{\rm i}\xi})) + \bar\partial_\beta({\rm e}^{{\rm i}\xi}(\partial_\beta {\rm e}^{-{\rm i}\xi})),$$
which satisfies $\mathcal F(0,(\alpha,\beta)) = 0$. 
The partial derivative of $\mathcal F$ at the point $(0,(\alpha,\beta))$ in the $\mathrm{Lie}(\mathcal G_{00})$-direction is the operator $\Delta_\mathcal T$. Owing to the assumption, this is an isomorphism. Hence the assertion follows from the implicit function theorem. 
\end{proof}

In particular, we note that this result applies to solutions $\mathcal T$ such that $$2\sup(\|T_2\|^2+\|T_3\|^2) <\pi^2,$$ owing to Proposition \ref{Prop:DegenLoc}.

\begin{corollary}
Consider the complex symplectic manifold $(\mathcal M^0,I,\omega_I^\C)$. Then
there exists an open neighbourhood of the set $\{\mathcal T\in\mathcal M\ | \ T_2=T_3=0\}$ which is isomorphic to a suitable open neighbourhood of the zero section in $\mathrm T^*G^\C$ with its canonical complex symplectic structure.
\end{corollary}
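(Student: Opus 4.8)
The plan is to promote the local statement of the preceding Theorem into a global statement on a neighborhood of the locus where $T_2=T_3=0$, using the two main inputs already established: the existence-and-uniqueness result for solving the real equation within a $\mathcal{G}_{00}^{\C}$-orbit (the preceding Theorem), and the fact that $\{\mathcal{T}\mid T_2=T_3=0\}$ avoids the degeneracy locus $\mathcal{D}$ (Proposition \ref{Prop:DegenLoc}). First I would observe that solutions with $T_2=T_3=0$ are precisely the solutions to the \emph{abelian-looking} real equation in which $\beta\equiv 0$; under the diffeomorphism $\Phi:\mathcal{N}\to G^{\C}\times\gf^{\C}\cong \mathrm{T}^\ast G^{\C}$ from Section \ref{complex}, the condition $\beta\equiv 0$ corresponds exactly to the zero section of $\mathrm{T}^\ast G^{\C}$, since $\Phi(\alpha,\beta)=(u_0(1),\beta(0))$ and $\beta\equiv 0$ forces the cotangent-fiber coordinate $\beta(0)$ to vanish. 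This pins down the identification that the corollary asserts.

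Next I would assemble the map. The natural map $\mathcal{M}\to\mathcal{N}$ sends a Nahm--Schmid solution to its underlying solution of the complex equation; on $\mathcal{M}^0$ this is injective and holomorphic with respect to $(I,\omega_I^{\C})$, because the preceding Theorem provides, locally near any $\mathcal{T}\notin\mathcal{D}$, a unique self-adjoint $u\in\mathcal{G}_{00}^{\C}$ bringing a nearby complex solution to a real one—this is exactly the inverse data needed to realize $\mathcal{M}^0$ as an open subset of $\mathcal{N}$. I would therefore argue: for each $\mathcal{T}_0$ in the set $\{T_2=T_3=0\}\subset\mathcal{M}$, Proposition \ref{Prop:DegenLoc} gives $\mathcal{T}_0\in\mathcal{M}^0$, so the preceding Theorem furnishes an $\epsilon(\mathcal{T}_0)>0$ and a unique self-adjoint complex gauge transformation solving the real equation for every complex solution within $\epsilon$ of $\mathcal{T}_0$. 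This means the map $\mathcal{M}\to\mathcal{N}$ restricts to a \emph{local} biholomorphism between a neighborhood of $\mathcal{T}_0$ in $\mathcal{M}^0$ and a neighborhood of $\Phi^{-1}$ of the corresponding zero-section point. Since $\Phi$ is already established to be a holomorphic symplectomorphism pulling back the canonical form on $\mathrm{T}^\ast G^{\C}$ to $\omega_I^{\C}$, composing identifies a neighborhood of $\{T_2=T_3=0\}$ in $(\mathcal{M}^0,I,\omega_I^{\C})$ with a neighborhood of the zero section carrying the canonical complex symplectic structure.

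The one genuinely global step—and the main obstacle—is upgrading the pointwise local isomorphisms into a single isomorphism on an open neighborhood of the \emph{entire} set $\{\mathcal{T}\in\mathcal{M}\mid T_2=T_3=0\}$. The local $\epsilon$'s from the implicit function theorem are not uniform a priori, and I must also check injectivity of the global map (not merely local injectivity), so that the $\epsilon$-balls patch into an embedding rather than just an immersion. I would handle this by noting that the zero-section locus is parametrized by $G^{\C}$ (via $\gamma=u_0(1)$ with $\beta\equiv 0$), that the construction is equivariant under the $G\times G$- and $\mathcal{G}$-actions which act transitively enough on this locus, and that the uniqueness clause in the preceding Theorem forces distinct points to have distinct images; together with openness of $\mathcal{M}^0\setminus\mathcal{D}$ this lets me shrink to a genuinely open neighborhood on which the map is a biholomorphism onto its image. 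The remaining verification that the pulled-back form is the canonical one is then immediate from the properties of $\Phi$ recorded in Section \ref{complex}, so no further computation is required.
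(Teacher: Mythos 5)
Your proposal is correct and follows essentially the route the paper intends: the locus $\{T_2=T_3=0\}$ avoids $\mathcal D$ by Proposition \ref{Prop:DegenLoc} (indeed $2\sup(\|T_2\|^2+\|T_3\|^2)=0<\pi^2$), the preceding Theorem then gives the local biholomorphism onto $\mathcal N$, and $\Phi$ identifies $\beta\equiv 0$ with the zero section of ${\rm T}^\ast G^\C$; the paper leaves the patching of the local statements implicit, exactly the point you flag and address. The only caveat is that the uniqueness clause of the Theorem gives \emph{local} injectivity only (it concerns gauge transformations near the identity), so global injectivity near the locus should instead come from local injectivity combined with injectivity on the locus itself (the Cartan decomposition $(\gamma,\xi)\mapsto\gamma\exp({\rm i}\xi)$), which is how your argument effectively concludes.
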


\subsection{Product structures\label{product}}
In this subsection, we investigate the paracomplex structure~\cite[Ch.~V]{Lieb} of the moduli space $\mathcal M$. 

As a first step, we identify the  space of field configurations $\mathcal A$, equipped with the product structure $S$, as a parak\"ahler manifold~\cite{Lieb}. Let 
$$\mathcal B := \left\{ \left. \frac{\mathrm d}{\mathrm dt} + A \ \right| A\in C^1( U, \mathfrak \gf) \ \right\}$$
be the space of $G$-connections on the interval $U$. Then we can naturally identify
\begin{equation} \label{cotanB}
{\rm T}^\ast \mathcal B = \left\{\left. \left(\frac{\mathrm d}{\mathrm dt} + A, B\right)  \right| A, B \in C^1( U, \mathfrak \gf) \ \right\} \cong C^1(U,\mathfrak{g})^2.
\end{equation}

\begin{proposition}\label{prop:Prod}
The map $P: (\mathcal A, S) \to (T^\ast \mathcal B\times T^\ast \mathcal B, {\rm id} \oplus (-{\rm id}))$ given by 
\begin{equation} \label{mapP}
\left(T_0, T_1,T_2,T_3\right) \mapsto \left(\left(T_0 + T_2, T_1+T_3\right),\left(T_0 - T_2, T_1-T_3\right)\right)
\end{equation}
is a diffeomorphism respecting the product structures.
\end{proposition}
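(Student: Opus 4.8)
The plan is to treat $P$ as a constant-coefficient linear map and reduce everything to linear algebra on the $\R^4\cong\B$ factor. First I would observe that $P$ is the tensor product of a fixed linear isomorphism of $\R^4$ with the identity on $C^1(U,\gf)$, so the diffeomorphism part is immediate; and then ``respecting the product structures'' reduces, since $P$ is linear (hence $\mathrm dP=P$ under the canonical identification of tangent spaces), to a single algebraic identity comparing $P\circ S$ with $({\rm id}\oplus(-{\rm id}))\circ P$ on the $\R^4$ factor.

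For the diffeomorphism claim I would argue as follows. Under the identification \eqref{cotanB}, write a point of $T^\ast\mathcal B\times T^\ast\mathcal B$ as $((A_+,B_+),(A_-,B_-))$, so that $P$ reads $A_\pm=T_0\pm T_2$ and $B_\pm=T_1\pm T_3$. This is governed by a constant invertible $4\times 4$ matrix; explicitly its inverse sends $((A_+,B_+),(A_-,B_-))$ to $\bigl(\tfrac12(A_++A_-),\,\tfrac12(B_++B_-),\,\tfrac12(A_+-A_-),\,\tfrac12(B_+-B_-)\bigr)$. Both $P$ and $P^{-1}$ are bounded for the $C^1$-norm, so $P$ is a linear homeomorphism of Banach spaces and therefore a diffeomorphism.

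For the product structures, recall that ${\rm id}\oplus(-{\rm id})$ acts as $+1$ on the first $T^\ast\mathcal B$ factor and as $-1$ on the second. Since $P$ is linear, I would compute, for a tangent vector $X=(X_0,X_1,X_2,X_3)$, on one hand $P(SX)$ using $SX=(X_2,X_3,X_0,X_1)$ from \eqref{S}, and on the other hand $({\rm id}\oplus(-{\rm id}))(PX)$, and check that these agree term by term: the first factor gives $(X_0+X_2,X_1+X_3)$ from both sides, while the second factor gives $(X_2-X_0,X_3-X_1)$ from both sides. Equivalently, and perhaps more transparently, one notes that the $(+1)$- and $(-1)$-eigenbundles of $S$ are cut out by $X_0=X_2,\,X_1=X_3$ and by $X_0=-X_2,\,X_1=-X_3$ respectively, and that $P$ carries these isomorphically onto $T^\ast\mathcal B\times\{0\}$ and $\{0\}\times T^\ast\mathcal B$, which are exactly the eigenbundles of ${\rm id}\oplus(-{\rm id})$. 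Either way the intertwining $P\circ S=({\rm id}\oplus(-{\rm id}))\circ P$ follows.

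There is no genuine obstacle here: the entire content is the constant linear change of variables on $\B$, and the only thing to be careful about is the index/sign bookkeeping in \eqref{S} together with the correct reading of the product structure on the target as $+1$ on the first factor and $-1$ on the second. If one wished to upgrade the statement to a full identification of parak\"ahler structures, one would additionally match the metric and the $2$-form $\omega_S$ using \eqref{Metric}; this is again a direct computation, but it is not needed for the claim as stated.
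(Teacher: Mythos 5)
Your proof is correct and follows essentially the same route as the paper, which simply observes that $P$ is (clearly) a diffeomorphism and verifies $\mathrm dP\circ S=(1\oplus(-1))\circ\mathrm dP$ by direct calculation. Your version just spells out the linear-algebraic details (explicit inverse, boundedness, eigenbundle picture) that the paper leaves implicit.
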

\begin{proof}
Clearly, the map $P$ is a diffeomorphism.
It is also straightforward to check by direct calculation that 
$$\mathrm dP \circ S = (1\oplus(-1)) \circ\mathrm dP.$$
\end{proof}

Now $P$ is not just a diffeomorphism. We observe that it intertwines the symplectic form $\omega_I$ and the product of canonical symplectic forms on ${\rm T}^\ast \mathcal B \times {\rm T}^\ast \mathcal B$. 

\begin{proposition}
Consider the symplectic form $\Omega := \omega_{T^\ast \mathcal B} \oplus \omega_{T^\ast \mathcal B}$  on ${\rm T}^\ast \mathcal B \times {\rm T}^\ast \mathcal B$. Then 
$$\omega_I = P^\ast \Omega.$$
\end{proposition}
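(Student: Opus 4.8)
The plan is to verify the identity by a direct computation in the global linear coordinates on $\mathcal A$, exploiting the fact that $P$ in \eqref{mapP} is linear, so that $\mathrm dP$ is the same at every point and both $\omega_I$ and $P^\ast\Omega$ are translation-invariant (constant-coefficient) $2$-forms on the vector space $\mathcal A\cong C^1(U,\gf)\otimes\R^4$. It therefore suffices to compare the two bilinear forms on a pair of tangent vectors $X=(X_0,X_1,X_2,X_3)$ and $Y=(Y_0,Y_1,Y_2,Y_3)$.

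First I would make $\omega_I$ explicit. Combining the formula \eqref{I} for $I$ with the metric \eqref{Metric} and absorbing the signs $\eta_{ii}$, one obtains
$$\omega_I(X,Y)=\int_U\bigl(\langle X_1,Y_0\rangle-\langle X_0,Y_1\rangle+\langle X_3,Y_2\rangle-\langle X_2,Y_3\rangle\bigr)\,\mathrm dt.$$
This is the target expression. Next I would record the canonical symplectic form on ${\rm T}^\ast\mathcal B$: using the identification \eqref{cotanB} together with the $L^2$-pairing to represent a point as $(A,B)$ with $A$ the connection coordinate and $B$ the conormal coordinate, the canonical form reads
$$\omega_{{\rm T}^\ast\mathcal B}\bigl((a_1,b_1),(a_2,b_2)\bigr)=\int_U\bigl(\langle b_1,a_2\rangle-\langle b_2,a_1\rangle\bigr)\,\mathrm dt.$$
By \eqref{mapP} the differential $\mathrm dP$ sends $X$ to $\bigl((X_0+X_2,X_1+X_3),(X_0-X_2,X_1-X_3)\bigr)$, so evaluating $P^\ast\Omega$ with $\Omega=\omega_{{\rm T}^\ast\mathcal B}\oplus\omega_{{\rm T}^\ast\mathcal B}$ amounts to summing the bilinear form above over the two light-cone factors.

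The heart of the argument is then to expand $P^\ast\Omega(X,Y)$ and collect terms. The key observation is that the $(+)$- and $(-)$-factors enter with opposite signs in the off-diagonal positions, so the mixed contributions $\langle X_1,Y_2\rangle$, $\langle X_3,Y_0\rangle$ (and their $X\!\leftrightarrow\!Y$ counterparts) cancel between the two factors, while the remaining pieces $\langle X_1,Y_0\rangle$, $\langle X_3,Y_2\rangle$, $\langle X_0,Y_1\rangle$, $\langle X_2,Y_3\rangle$ survive and reassemble into precisely the displayed expression for $\omega_I$. Reading off the two coordinate expressions then yields $\omega_I=P^\ast\Omega$.

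The one point requiring care --- and the step I expect to be the main obstacle --- is the bookkeeping of normalizations. The light-cone change of variables $(T_0,T_2)\mapsto(T_0\pm T_2)$ and $(T_1,T_3)\mapsto(T_1\pm T_3)$ naively introduces an overall factor of $2$ in the surviving terms (this is intrinsic to such a coordinate change and independent of the sign convention for the canonical form). One must therefore fix the normalization of the pairing defining $\Omega$ --- equivalently, the $L^2$-identification of ${\rm T}^\ast\mathcal B$ used in \eqref{cotanB} --- consistently with the metric \eqref{Metric}, so that the surviving terms carry coefficient exactly $1$. Once the conventions are aligned, the cancellation of the cross terms is the whole content of the proposition and the equality follows immediately.
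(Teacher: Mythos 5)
Your proposal is correct and follows essentially the same route as the paper: exploit the linearity of $P$, write $\omega_I$ and the product of canonical forms as constant-coefficient bilinear expressions in the components, push tangent vectors through $\mathrm dP$, and observe that the cross terms cancel while the surviving diagonal terms reassemble into $\omega_I$. The normalization issue you flag is genuine --- the naive light-cone substitution produces an overall factor of $2$ that the paper's own displayed formulas also leave implicit --- and it is resolved exactly as you indicate, by fixing the $L^2$-identification in \eqref{cotanB} consistently with the metric \eqref{Metric}.
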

\begin{proof}
The symplectic form $\omega_I$ is given at an element $\mathcal{T}\in \mathcal{A}$ by 
$$\omega_I(X,Y) = -g(X_0,Y_1) + g(X_1,Y_0) - g(X_2,Y_3) + g(X_3,Y_2),$$
where $X =(X_0,X_1,X_2,X_3) , Y=(Y_0,Y_1,Y_2,Y_3) \in {\rm T}_\mathcal{T}\mathcal A$ are two tangent vectors at $\mathcal T$. 
The symplectic form $\Omega$ is given by
$$\Omega (V,W) = g(V_0,W_1) - g(V_1,W_0) + g(V_2,W_3) - g(V_3,W_2),$$
with $V = (V_0,V_1,V_2,V_3), W = (W_0,W_1,W_2,W_3) \in {\rm T}({\rm T}^\ast \mathcal B \times {\rm T}^\ast \mathcal B) =  {\rm T}({\rm T}^\ast \mathcal B) \oplus {\rm T}({\rm T}^\ast \mathcal B).$ 
Now $P$ is a linear map in the description (\ref{mapP}), and this implies that $\mathrm dP$ maps a tangent vector $X$ at  $\mathcal{T} \in \mathcal A$ to a tangent vector $V$  at $P(\mathcal{T}) \in {\rm T}^\ast \mathcal B \times {\rm T}^\ast \mathcal B$ of the form
$$V = (X_0 +X_2, X_1+X_3, X_0-X_2,X_1-X_3).$$
Plugging $V,W$ as above, for some $X,Y$ tangent to $\mathcal{T}\in \mathcal A$, into the formula for $\Omega$, gives the result claimed, i.e. 
$$\Omega(V,W) = \omega_I(X,Y).$$
\end{proof}

In the coordinates on ${\rm T}^\ast \mathcal B \times {\rm T}^\ast \mathcal  B$ provided by the description (\ref{cotanB}), the Nahm--Schmid equations are then equivalent to the system
\begin{eqnarray}
\dot B_1 + \left[A_1, B_1\right] &=& 0 \nonumber \\
\dot B_2 + \left[ A_2, B_2\right] &=& 0 \label{paracxreal} \\
\dot A_1 - \dot A_2 + \left[\frac{1}{2}(A_1+A_2),A_1- A_2\right] - \left[B_1,B_2\right] &=& 0, \nonumber
\end{eqnarray}
when we set
$$A_1 := T_0+T_2,\quad A_2 := T_0-T_2,\quad B_1 := T_1+T_3,\quad B_2 := T_1-T_3.$$
We may view the first two equations in (\ref{paracxreal}) as a single $\gf\oplus\gf$-valued ``paracomplex" equation, and the third equation as a ``real" equation. It is evident that the paracomplex equation is invariant under what one may call {\em paracomplex gauge transformations}, i.e. elements of $\mathcal G\times \mathcal G$ acting componentwise as
$$(u_1,u_2).\left(\frac{\mathrm d}{\mathrm dt} + A_i, B_i\right)_{i=1}^2 = \left(\frac{\mathrm d}{\mathrm dt} + u_i^{-1}A_iu_i +u_i^{-1}\dot u_i,u_i^{-1} B_iu_i\right)_{i=1}^2.$$
Let us consider the moduli space $\mathcal P$ given by
$$\mathcal P := \{(A,B)\in T^\ast \mathcal B\ | \ \dot B+[A,B] = 0\}/\mathcal G_{00}.$$
We define, in analogy with Theorem~\ref{MGg3}, a bijection  
$$\mathcal P \to G \times \gf,\qquad (A,B) \mapsto (u(1),B(0)),$$
where again $u$ is the unique gauge transformation that satisfies $A = -\dot uu^{-1}$ and  $u(0) = 1_G$. We may view $\mathcal P$ as a symplectic quotient of ${\rm T}^\ast \mathcal B$ by the action of the group $\mathcal G_{00}$. This map then identifies $\mathcal P$ and ${\rm T}^\ast G$ as symplectic manifolds. By applying this construction on each factor of ${\rm T}^\ast \mathcal B \times {\rm T}^\ast \mathcal B$ and composing with the map $P$ from above, we obtain a map from the moduli space of solutions to the Nahm--Schmid equations to $\mathcal P \times \mathcal P\cong {\rm T}^\ast G\times {\rm T}^\ast G$.

The paracomplex equation in the system (\ref{paracxreal}) may be interpreted as saying that, for $i=1,2$, the connections $$\nabla_i := \left(\frac{\mathrm d}{\mathrm dt} + A_i\right)\mathrm dt + B_i \, \mathrm dx$$ on the adjoint bundle over Euclidean $\R^2$ are \emph{flat}. 

We note that any solution to the paracomplex equation with $A_1 = A_2$ and $B_1 = B_2$ automatically solves the real equation. The corresponding solutions to the Nahm--Schmid equations satisfy $T_2 = 0 = T_3$. We can moreover rewrite the real equation as 
$$\dot A_1 - \dot A_2 + \left[A_2,A_1- A_2\right] + \left[B_2,B_1-B_2\right] = 0,$$
which says that the two flat connections $\nabla_1,\nabla_2$ are in \emph{Coulomb gauge} with respect to each other, i.e. 
$$\mathrm d^{\nabla_2} *(\nabla_1-\nabla_2) = 0.$$ 

This gives a natural interpretation of the Nahm--Schmid equations in terms of the geometry of the infinite-dimensional Riemannian manifold ${\rm T}^\ast \mathcal B/\mathcal G_{00}$, with Riemannian metric given by the usual $L^2$-metric. 

The  $L^2$-metric equips the affine space ${\rm T}^\ast \mathcal B$ with the structure of a flat infinite-dimensional Riemannian manifold, so geodesic segments $\gamma: [0,1]\to {\rm T}^\ast \mathcal B$ starting at $(A_1,B_1)$ with initial velocity $(a_1,b_1)$ are just straight lines: 
$$\gamma(\tau) = (A_1,B_1) + \tau(a_1,b_1), \quad \tau\in [0,1].$$
The group $\mathcal G_{00}$ acts by isometries and geodesic segments on the quotient ${\rm T}^\ast \mathcal B/\mathcal G_{00}$ can be lifted (at least locally) to horizontal geodesics on ${\rm T}^\ast \mathcal B$, i.e. geodesics, the velocity vector of which  is orthogonal to the $\mathcal G_{00}$ orbits. A geodesic $\gamma$ as above will be horizontal exactly when $(a_1,b_1)$ satisfies the Coulomb gauge condition. This means that a solution to the Nahm--Schmid equations on $[0,1]$ can be interpreted as a horizontal geodesic segment in ${\rm T}^\ast \mathcal B$, the end points of which lie in the space of solutions to the equation 
$$\dot B + [A,B] = 0.$$
Now a calculation similar to the one at the end of the previous section shows that if $\nabla_1,\nabla_2$ defines a solution to the Nahm--Schmid equations then the linearisation of the equation 
$$\mathrm d^{\nabla_1}*({\rm e}^\xi.(\nabla_2)-\nabla_1)=0$$ is again 
$$\Delta_\mathcal T\xi =0.$$
Thus, elements of the degeneracy locus correspond to horizontal geodesic segments, the endpoints of which are conjugate. The degeneracy locus $\mathcal D$ therefore has a natural interpretation in terms of the cut-locus of the infinite-dimensional Riemannian manifold ${\rm T}^\ast \mathcal B/\mathcal G_{00}$. 

Given a pair of points in $\mathcal P$ that are sufficiently close an application of the implicit function theorem shows that there exists a unique gauge transformation close to the identity that puts them into Coulomb gauge.   It follows that we can identify an open subset of $\mathcal M^0=(\mu^{-1}(0)\setminus\mathcal D)/\mathcal G_{00}$ with a  neighbourhood of the diagonal inside ${\rm T}^\ast G \times {\rm T}^\ast G$. In summary, we have the following observation:

\begin{proposition}\label{Prod2}
Consider the symplectic manifold $(\mathcal M^0,\omega_I)$ with local product structure $S$. 
The map $P$ defined in Proposition \ref{prop:Prod} induces a symplectomorphism between a neighbourhood of the subset $\{\mathcal T\in\mathcal M\ | \ T_2=T_3\equiv 0\}$ and a neighbourhood of the diagonal in $({\rm T}^\ast G\times {\rm T}^\ast G, \omega_{{\rm T}^\ast G}\oplus \omega_{{\rm T}^\ast G})$. This symplectomorphism is compatible with the local product structure $S$ on $\mathcal M^0$ and the obvious product structure on ${\rm T}^\ast G\times {\rm T}^\ast G$.
\end{proposition}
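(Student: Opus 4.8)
The plan is to build the asserted map as the composite of $P$ with the factorwise identification $\mathcal P\cong {\rm T}^\ast G$, and then to verify bijectivity near the diagonal, the symplectic identity, and compatibility of the product structures as three separate points. First I would record how the three equations \eqref{paracxreal} reorganise the hypersymplectic moment map. Denoting by $\nu=(\dot B_1+[A_1,B_1],\ \dot B_2+[A_2,B_2])$ the moment map of the $\mathcal G_{00}\times\mathcal G_{00}$-action on ${\rm T}^\ast \mathcal B\times {\rm T}^\ast \mathcal B$ with respect to $\Omega$, a direct computation in the coordinates $A_i,B_i$ gives $\dot B_1+[A_1,B_1]=\mu_T-\mu_I$, $\dot B_2+[A_2,B_2]=-\mu_T-\mu_I$, while the third (``real'') equation equals $2\mu_S$. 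Hence the two paracomplex equations are exactly $\nu=0$ (equivalently $\mu_I=\mu_T=0$), and $P$ maps the Nahm--Schmid locus $\mu^{-1}(0)=\nu^{-1}(0)\cap\{\mu_S=0\}$ into $\nu^{-1}(0)$. Composing with the quotient $q\colon\nu^{-1}(0)\to\mathcal P\times\mathcal P\cong {\rm T}^\ast G\times {\rm T}^\ast G$ yields a map $F\colon\mathcal M^0\to {\rm T}^\ast G\times {\rm T}^\ast G$, well defined because $q\circ P$ is invariant under $\mathcal G_{00}^{\rm diag}\subset\mathcal G_{00}\times\mathcal G_{00}$. On $\{T_2=T_3\equiv 0\}$ one has $A_1=A_2$ and $B_1=B_2$, so $F$ lands on the diagonal.

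The analytic heart is to show that $F$ is a diffeomorphism onto a neighbourhood of the diagonal. Since $(\mathcal G_{00}\times\mathcal G_{00})/\mathcal G_{00}^{\rm diag}\cong\mathcal G_{00}$, the $\mathcal G_{00}^{\rm diag}$-orbits inside a single $(\mathcal G_{00}\times\mathcal G_{00})$-orbit of $\nu^{-1}(0)$ are parametrised by one copy of the relative gauge group $\mathcal G_{00}$. Constructing the inverse of $F$ thus amounts to showing that, for two sufficiently close points of $\mathcal P\cong {\rm T}^\ast G$ represented by solutions $(A_i,B_i)$ of the paracomplex equation, there is a unique relative gauge transformation near the identity placing the two connections $\nabla_1,\nabla_2$ in Coulomb gauge, i.e. solving the real equation $\mu_S=0$. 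This is precisely the implicit function theorem argument whose linearisation was identified with the operator $\Delta_\mathcal T$ of \eqref{linearop}; because $\{T_2=T_3\equiv 0\}$ avoids the degeneracy locus $\mathcal D$ by Proposition~\ref{Prop:DegenLoc}, $\Delta_\mathcal T$ is an isomorphism there, and the inverse exists and is smooth on a neighbourhood. I expect this Coulomb-gauge step to be the main technical point, although it runs in exact parallel to the existence-and-uniqueness theorem already proved for the complex structure $I$.

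For the symplectic assertion I would appeal to naturality of symplectic reduction rather than compute in coordinates. Write $\iota\colon\mu^{-1}(0)\hookrightarrow\mathcal A$ for the inclusion, $\pi_{\mathcal M}\colon\mu^{-1}(0)\to\mathcal M$ for the quotient, and $\bar\Omega:=\omega_{{\rm T}^\ast G}\oplus\omega_{{\rm T}^\ast G}$; by construction $F\circ\pi_{\mathcal M}=q\circ(P\circ\iota)$ with $P\circ\iota$ landing in $\nu^{-1}(0)$. Using the defining property $q^\ast\bar\Omega=\Omega|_{\nu^{-1}(0)}$ of the reduced form together with the identity $\omega_I=P^\ast\Omega$ established above, one gets $\pi_{\mathcal M}^\ast F^\ast\bar\Omega=(P\circ\iota)^\ast(\Omega|_{\nu^{-1}(0)})=\iota^\ast P^\ast\Omega=\iota^\ast\omega_I=\pi_{\mathcal M}^\ast\omega_I$. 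As $\pi_{\mathcal M}$ is a surjective submersion, $\pi_{\mathcal M}^\ast$ is injective on forms, so $F^\ast\bar\Omega=\omega_I$ on $\mathcal M^0$.

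Finally, compatibility with the product structures follows by descending the identity $\mathrm dP\circ S=({\rm id}\oplus(-{\rm id}))\circ\mathrm dP$ of Proposition~\ref{prop:Prod} to the quotients. Under the identification of ${\rm T}\mathcal M^0$ with solutions of the linearised equations, $S$ acts by \eqref{S}; $\mathrm dP$ intertwines it with ${\rm id}\oplus(-{\rm id})$ on ${\rm T}({\rm T}^\ast \mathcal B\times {\rm T}^\ast \mathcal B)$, whose $(+1)$- and $(-1)$-eigenbundles descend to the tangent spaces of the two factors of ${\rm T}^\ast G\times {\rm T}^\ast G$ --- that is, to the obvious product structure. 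Assembling the three verifications gives the claimed $S$-compatible symplectomorphism onto a neighbourhood of the diagonal.
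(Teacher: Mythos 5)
Your proposal is correct and follows essentially the same route as the paper: the map is $P$ composed with the factorwise symplectic identification $\mathcal P\cong {\rm T}^\ast G$, the local inverse near the diagonal comes from the Coulomb-gauge implicit function theorem whose linearisation is $\Delta_{\mathcal T}$ (invertible near $\{T_2=T_3\equiv 0\}$ by Proposition~\ref{Prop:DegenLoc}), the symplectic identity descends from $\omega_I=P^\ast\Omega$, and the product compatibility from $\mathrm dP\circ S=({\rm id}\oplus(-{\rm id}))\circ\mathrm dP$. Your explicit bookkeeping identifying the paracomplex equations with $\mu_I=\mu_T=0$ and the real equation with $2\mu_S$ is a correct (and slightly more explicit) rendering of what the paper leaves implicit.
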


\section{Twistor space and spectral curves}\label{Lax}

This section describes how general constructions and results on paraconformal manifolds and on integrable systems with spectral parameter apply to the moduli space of solutions of the Nahm--Schmid equations.

\subsection{Twistor space}
Bailey and Eastwood \cite{BE} gave a very general construction of twistor space in the holomorphic category and the usual twistor space constructions for real manifolds can be viewed 
as applying a particular real structure to a Bailey--Eastwood twistor space. In the case of hypersymplectic manifolds this works as follows.
\par
The complexification of the algebra of split quaternions is the same as the complexification of the usual quaternions. Thus on the complexified tangent bundle ${\rm T}^\cx M$ of a hypersymplectic manifold $(M,g,I,S,T)$ we obtain three anti-commuting endomorphisms $I,J={\rm i}S,K={\rm i}T$ which all square to $-1$. For any point $p=(a,b,c)$ on the sphere $S^2\subset \R^3$, we consider the $(-{\rm i})$-eigenspace $E_p$ of the endomorphism $aI+bJ+cK$. This is an integrable distribution of ${\rm T}^\cx M$, since $I,S,T$ are integrable.  
Since a hypersymplectic manifold is real-analytic, it has a complexification $M^\cx$. The holomorphic tangent bundle of $M^\cx$ restricted to $M$ is naturally identified with ${\rm T}^\cx M$, and we can extend the endomorphisms $I,J,K$ uniquely to a neighbourhood of $M$ in $M^\cx$. For each $p\in S^2$ we obtain an integrable holomorphic distribution $E_p$ in ${\rm T}^{1,0} M^\cx$, and these combine to give an an integrable holomorphic distribution $E$ on $M^\cx\times \mathbb{CP}^1$. 
The twistor space $Z$ of $M$, as defined by Bailey and Eastwood, is the space of leaves of all $E$.  Let us discuss particular features of this construction in the case of hypersymplectic manifolds.
\par
First of all, we observe that, if $a\neq 0$, then the equation $ (aI+b{\rm i}S+c{\rm i}T)u=-{\rm i}u$ can be rewritten as $(a^{-1}I+ca^{-1}S-ba^{-1}T)u=-{\rm i}u$. The endomorphism $a^{-1}I+ca^{-1}S-ba^{-1}T$ is, as can be readily checked, a complex structure on $M$.

Thus for $a\neq 0$, the space of leaves of $E_p$ is just $M$ equipped with the corresponding complex structure $a^{-1}I+ca^{-1}S-ba^{-1}T$. If $a=0$, $E_p$ is the complexification of the $(-1)$-eigenspace (in ${\rm T}M$) of the product structure $bS +cT$ on $M$, and its space of leaves  does not have to be Hausdorff. If, however, the space of leaves of the $(-1)$-eigenspace of $bS +cT$ is a manifold $M^+_{b,c}$, then the space of leaves of $E_{(0,b,c)}$ on $M^\cx$ can be identified (possibly after making $M^\cx$ smaller) with the complexification of $M^+_{b,c}$. Thus in this case 
$Z$ is a manifold. This is the case on a neighbourhood of the subset $\{\mathcal T\in\mathcal M\ | \ T_2=T_3\equiv 0\}$, as shown in Proposition \ref{Prod2}. We shall soon see that the moduli space $\mathcal M$ of Nahm--Schmid equations has a globally defined twistor space.

\par
The space $Z$ is clearly a complex manifold, and the projection onto $S^2\simeq \mathbb{CP}^1$ is holomorphic. It has a natural anti-holomorphic involution $\sigma$ obtained from the real structure on $M^\cx$ and the inversion $\tau$ with respect to the circle $a=0$ (i.e. $|\zeta|=1$) on $\mathbb{CP}^1$, $$\sigma(m,\zeta) = \left(\bar m,\frac{1}{\bar\zeta}\right).$$
\par
The manifold $M$ can be now recovered as a connected component of the Kodaira moduli space of real sections (i.e. equivariant with respect to $\tau$ on $\mathbb{CP}^1$ and $\sigma$ on $Z$) the normal bundle of which splits as a sum of $\mathcal{O}(1)$'s. The endomorphisms $I,S,T$ are also easily recovered, since the fibre of $Z$ over any $(a,b,c)$ with $a\neq 0$ is canonically biholomorphic to $(M, a^{-1}I+ca^{-1}S-ba^{-1}T)$, and any real section in our connected component of the Kodaira moduli space meets such a fibre in exactly one point. Observe that, over the circle $a=0$, it is no longer true that  real sections separate points of the fibres. All we can say that such sections locally separate points in $$\bigl(Z_{(0,b,c)}\bigr)^\sigma\times \bigl(Z_{(0,-b,-c)}\bigr)^\sigma\simeq M^+_{b,c}\times M^+_{-b,-c},$$ which is equivalent to $(M, bS+cT)$ being locally isomorphic to $M^+_{b,c}\times M^+_{-b,-c}$.
\begin{remark} Unlike in the case of hyperk\"ahler manifolds, the twistor space is not uniquely determined by $M$, since over the circle $|\zeta|=1$  only the $\tau$-invariant part of the fibres of $Z \rightarrow \C\PP^1$ is  uniquely determined. Even more generally, we shall refer to any complex manifold $Z$ with a holomorphic submersion onto $\C\PP^1$ and an anti-holomorphic involution $\sigma$ covering the inversion $\tau$ as a {\em twistor space}. It should be pointed out, however, that if we start with such a generalised $Z$, and apply the construction given at the beginning of this section to  a connected component $M$ of the Kodaira moduli space of real sections of $Z$, we shall in general not recover $Z$. All we can say is that the resulting twistor space $Z_M$ of $M$ is equipped with a local biholomorphism $\phi:Z_M\to Z$, which  is a fibre map (over $\cx\oP^1$) and which intertwines the real structures.\label{open}
\end{remark}
To recover the metric from $Z$, we need one more piece of data: a twisted holomorphic symplectic form along the fibres of $Z\to \mathbb{CP}^1$.  If we extend the metric $g$ to the complexified tangent bundle ${\rm T}^\cx M$ and define $\omega_I=g(I\cdot,\cdot)$, $\omega_J=g(J\cdot,\cdot)$, $\omega_K=g(K\cdot,\cdot)$, then it follows easily that the $\mathcal O(2)$-valued $2$-form
$$ \omega_+=\omega_J + {\rm i}\omega_K +2\zeta \omega_I - \zeta^2(\omega_J -{\rm i}\omega_K)$$
vanishes on vectors in $E_\zeta$. Here $\zeta$ is the standard complex affine coordinate on $\mathbb{CP}^1$ related to $p=(a,b,c)$ via
$$(a,b,c) = \left(\frac{1 -\zeta\bar \zeta}{1+\zeta \bar \zeta}, \frac{-(\zeta + \bar \zeta)}{1+\zeta \bar \zeta},\frac{{\rm i}(\bar\zeta-\zeta)}{1+\zeta \bar \zeta}\right) .$$
It follows that $\omega_+$ descends to fibres of the projection $Z\to \mathbb{CP}^1$. Since $J={\rm i}S$ and $K={\rm i}T$, we can rewrite $\omega_+$ in terms of the symplectic structures $\omega_I,\omega_S,\omega_T$ on $M$:
$$\omega_+={\rm i}\left(\omega_S + {\rm i}\omega_T -2{\rm i}\zeta \omega_I - \zeta^2(\omega_S -{\rm i}\omega_T)\right).$$
We can of course replace $\omega_+$ by $-{\rm i}\omega_+$ and hence, from now on, we shall consider the following holomorphic symplectic form along the fibres of $Z$:
\begin{equation}\label{Omega} \omega_+=\omega_S + {\rm i}\omega_T - 2{\rm i}\zeta \omega_I - \zeta^2(\omega_S -{\rm i}\omega_T).\end{equation}

\begin{remark} Away from the locus $|\zeta|\neq 1$, the twistor space can be also constructed in analogy with Hitchin's original construction of the twistor space of a hyperk\"ahler manifold in \cite{HKLR:1987}. First, we identify $\mathbb{CP}^1\backslash \{|\zeta|=1\}$ with the two-sheeted hyperboloid
$$ C=\{(x_1,x_2,x_3)\in \oR^3\ | \ x_1^2 -x_2^2-x_3^2 = 1\}$$ via 
$$x=(x_1,x_2,x_3) = \left(\frac{1 +\zeta\bar \zeta}{1-\zeta \bar \zeta}, \frac{{\rm i}(\bar\zeta-\zeta)}{1-\zeta\bar\zeta},\frac{\zeta + \bar \zeta}{1-\zeta \bar \zeta}\right).$$
Then we define an almost complex structure on the product $Z^{ o}:=M\times C$, which at $(m,x)$ is equal to $(x_1I+x_2S+x_3 T)\oplus J_0$, where $J_0$ is the standard complex structure on $\mathbb{CP}^1$. An argument completely analogous to the one in \cite{HKLR:1987} shows that this almost complex structure is integrable, and $Z^{ o}$ becomes a complex manifold fibring over $C$. Unlike in the case of hyperk\"ahler geometry, there is however no easy way to recover $M$ from $Z^{ o}$.
\end{remark}

Assume now that a Lie group $G$ acts on $M$ preserving the hypersymplectic structure, and that this action extends to a global action of a complexification $G^\cx$ of $G$. If the $G$-action is Hamiltonian for $\omega_I,\omega_S,\omega_T$, then the action of $G^\cx$ is Hamiltonian for the symplectic form \eqref{Omega},  and we obtain an associated moment map
$$\mu_+ = \mu_S + {\rm i}\mu_T -2{\rm i}\zeta \mu_I - \zeta^2(\mu_S -{\rm i}\mu_T).$$
 The fibrewise complex-symplectic quotient $\bar{Z}$ of $Z$ by $G^\cx$ (assuming it is manifold) is then a twistor space in the sense of Remark \ref{open}. The space of real sections of $\bar{Z}$ which descend from $Z$ is the hypersymplectic quotient of $M$ by $G$ (note however that the hypersymplectic quotient does not need to be a hypersymplectic manifold everywhere: some of these sections may have a wrong normal bundle).
 
We shall now apply the ideas above to the infinite dimensional context of the Nahm--Schmid equations. The twistor space of the flat infinite-dimensional hypersymplectic Banach manifold $\mathcal A$ is formally the total space of $C^1(U,\gf^\cx)\otimes \cx^2\otimes \mathcal O(1)$.
The fibrewise moment map for the complexified gauge group $\mathcal G_{00}$ with respect to $\omega_+$ is the Lax equation discussed in \S\ref{complex}. Thus each fibre of the quotient twistor space is isomorphic to $\gf^\cx\times G^\cx$. A computation completely analogous to that in \cite{Kronheimer:1988} shows that the twistor space $Z$ of $\mathcal M$ is obtained by gluing two copies of $\cx\times \gf^\cx\times G^\cx$ by means of the transition function
$$ (\zeta,\eta, g) \mapsto \left( \frac{1}{\zeta},\frac{\eta}{\zeta^2},g\exp(-\eta/\zeta) \right).
$$
The real structure is given  by
$$\sigma(\zeta,\eta,g)=\bigl(1/\bar\zeta,\bar\eta/\bar\zeta^2,(g^{\ast})^{-1}\exp(-\bar\eta/\bar\zeta)\bigr),
$$
and the complex symplectic form along the fibres is the standard symplectic form on ${\rm T}^\ast G^\cx$.

Real sections of $Z$ are obtained from real sections of $C^1(U,\gf^\cx)\otimes \cx^2\otimes \mathcal O(1)$ via the fibrewise quotient with respect to $\omega_+$.
If we write 
$$\alpha =  T_0-{\rm i}T_1,\qquad \beta  = T_2+{\rm i}T_3,$$  
then the moment maps of Proposition \ref{mmaps} can be written as:
\begin{eqnarray*}
2{\rm i}\mu_I(\mathcal T) &=& [\frac{\mathrm d}{\mathrm dt} + \alpha, -\frac{\mathrm d}{\mathrm dt}+\alpha^\ast ] -[\beta,\beta^\ast ] =  \dot\alpha + \dot\alpha^\ast  + [\alpha,\alpha^\ast ] - [\beta,\beta^\ast ] \\
(\mu_S +{\rm i}\mu_T)(\mathcal T)&=& [\frac{\mathrm d}{\mathrm dt} + \alpha, \beta] =  \dot\beta + [\alpha,\beta] \\
(\mu_S -{\rm i}\mu_T)(\mathcal T) &=& [-\frac{\mathrm d}{\mathrm dt} + \alpha^\ast , \beta^\ast ] = -\dot\beta^\ast  + [\alpha^\ast ,\beta^\ast ].
\end{eqnarray*}
Putting the terms together, we obtain for $\mu_+$
$$\mu_+(\mathcal T) = \mu_S +{\rm i}\mu_T -2{\rm i}\zeta \mu_I -\zeta^2(\mu_S -{\rm i}\mu_T) = \left[\frac{\mathrm d}{\mathrm dt} + \alpha -\zeta \beta^\ast , \beta -\zeta \left(-\frac{\mathrm d}{\mathrm dt} +\alpha^\ast \right)\right].$$
Equivalently, we can modify this to
$$\mu_+(\mathcal T) =\left[\frac{\mathrm d}{\mathrm dt} + \alpha -\zeta \beta^\ast , \beta -\zeta \left(\alpha +\alpha^\ast \right)+\zeta^2\beta^\ast  \right].$$

Thus the real sections of $Z$ obtained in this way are precisely gauge equivalence classes of solutions to the Nahm--Schmid equations, i.e.\ points of $\mathcal M$. However, only the sections corresponding to $\mathcal M\backslash \mathcal D$ have normal bundle with the required splitting property.

\subsection{The spectral curve\label{spectral}}

The expression for $\mu_+(\mathcal T)$ obtained above means that the Nahm--Schmid equations are equivalent to the following single ODE with a complex parameter:
\begin{equation}\label{Lax-spectral} \frac{\rm d}{{\rm d}t}\left(\beta -\zeta (\alpha +\alpha^\ast)+\zeta^2\beta^\ast\right)=\left[    \beta -\zeta \left(\alpha +\alpha^\ast \right)+\zeta^2\beta^\ast,\ \alpha -\zeta \beta^\ast  \right].\end{equation}
This is an example of a Lax equation with spectral parameter: if we write 
$$T(\zeta)=\beta -\zeta (\alpha +\alpha^\ast)+\zeta^2\beta^\ast, \ T_+(\zeta)=\alpha -\zeta \beta^\ast,$$
then \eqref{Lax-spectral} becomes
\begin{equation}\label{Laxs} \frac{{\rm d}T(\zeta)}{{\rm d}t}=\left[T(\zeta),T_+(\zeta)\right].\end{equation}
This equation has been studied extensively (see e.g. \cite{AHH}), and so we can simply state the relevant results. Here we shall restrict ourselves to the case $\gf=\un(n)$.
\par
First of all, \eqref{Laxs} implies that the spectrum of $T(\zeta)$ is constant in $t$, so that the characteristic polynomial of $T(\zeta)$ defines a conserved quantity of the Nahm--Schmid system:
$$  \Sigma=\left\{(\zeta,\eta);\ \det(\eta-T(\zeta))=0\right\}.$$
This is an algebraic curve (possibly singular) of (arithmetic) genus $(n-1)^2$ in the total space of the line bundle $\mathcal O(2)$ over $\mathbb{CP}^1$, invariant under the involution 
\begin{equation}\label{tau}\sigma(\zeta,\eta)=\bigl(1/\bar\zeta,\bar\eta/\bar\zeta^2\bigr).
\end{equation}
In particular, we observe that the coefficient of $\zeta^2$ in $\tr T(\zeta)^2$ is the conserved quantity $C = 2\|T_1\|^2 + \|T_2\|^2 + \|T_3\|^2$ introduced in Corollary \ref{GlobalReg}.
\par
 The cokernel of $\eta-T(\zeta)$ is a semistable sheaf $F$ on $\Sigma$ with $\chi(F)=n^2-n$ (and such that $H^0(\Sigma,F\otimes \mathcal O(-1))=0$). The Lax equation corresponds then to the linear flow $t\to F\otimes L^t$, where
 $L$ is a line bundle on $T\mathbb{CP}^1\simeq |\mathcal O(2)|$ with the transition function $\exp(\mu(\zeta,\eta))$ and $\mu$ is polynomial in $\zeta^{-1}$ and $\eta$ determined by the condition that the positive powers of $\zeta$ in $\mu\bigl(\zeta,T(\zeta)\bigr)$ and in $-T_+(\zeta)$ coincide \cite{AHH}. In our case, it follows that $\mu(\zeta,\eta)=\eta/\zeta$.
 \par
 The sheaves arising from solutions to the Nahm--Schmid equations satisfy several further conditions. 
 Consider the case of a smooth spectral curve $\Sigma$. Then the cokernel of $\eta-T(\zeta)$ is $1$-dimensional for each $(\zeta,\eta)\in \Sigma$ and, consequently, $F$ is a line bundle. Moreover $F(-1)= F\otimes \mathcal O(-1)$ is a line bundle of degree $g-1$ not in the theta divisor. The fact that $F$ is obtained from the matrix polynomial $T(\zeta)$ imposes two further conditions:
 \begin{itemize}
 \item[(i)] $E=F(-1)$ satisfies the reality condition $E\otimes \tau(E)\simeq K_ S\simeq \mathcal O(2n-4)$, where $\tau$ is the induced antiholomorphic involution on $\Jac^{g-1}(\Sigma)$;
 \item[(ii)] the Hitchin metric \cite[\S 6]{Hitchin:1983} on $H^0(\Sigma,F)$ is positive definite.
 \end{itemize}
 It is perhaps worth pointing out that the fact that the solutions to the Nahm--Schmid equations exist for all time implies that the closure of the set of line bundles satisfying the above conditions does not intersect the theta divisor in $\Jac^{g-1}(\Sigma)$.

 \begin{example}[Spectral curves for $\gf=\mathfrak{su}(2)$]
We calculate the spectral curve associated with a solution  $(T_0,T_1,T_2,T_3) = (0, f_1e_1, f_2e_2, f_3e_3)$ as in \eqref{PauliAnsatz}. With
\begin{equation}\label{basis} e_1=\frac{1}{2}\begin{pmatrix} {\rm i} & 0\\0 & -{\rm i}\end{pmatrix},\quad e_2=\frac{1}{2}\begin{pmatrix} 0 & 1\\-1 & 0\end{pmatrix}, \quad e_3=\frac{1}{2}\begin{pmatrix} 0 & {\rm i}\\{\rm i} & 0\end{pmatrix},
\end{equation}
 we get 
$$T(\zeta)=\frac{1}{2}\begin{pmatrix} \ -2f_1\zeta & f_2(1-\zeta^2)-f_3(1+\zeta^2)\\ \\f_2(\zeta^2-1)-f_3(1+\zeta^2) & 2f_1\zeta\ \end{pmatrix}.$$
A direct computation yields
$$\det (\eta - T) = \eta^2 - \frac{1}{2}(2f_1^1+f_2^2+f_3^2)\zeta^2 +\frac{1}{4}(f_2^2-f_3^2)(1+\zeta^4).$$
Taking $f_1(t) = a \kappa \, \mathrm{sn}_\kappa(at+b), f_2(t) = a \kappa \, \mathrm{cn}_\kappa(at+b), f_3(t) = - a\, \mathrm{dn}_\kappa(at+b)$ we can write this as 
$$\det (\eta - T) =  \eta^2 -\frac{a^2}{2}(1+\kappa^2)\zeta^2 + \frac{a^2}{4}(\kappa^2-1)(1+\zeta^4).$$
In this case, $g=1$ and we can identify $\Jac^0(\Sigma)$ with $\Sigma$ itself. If we choose the origin (trivial bundle) $O$ to correspond to a $\tau$-invariant point on $\Sigma$, then the induced anti-holomorphic involution on $\Jac^0(\Sigma)\simeq  \Sigma$ is just $\tau$. According to the above characterisation of the sheaves $F$, they correspond to points $x\in \Sigma$ such that $x+\tau(x)=O$. This is a union of two circles in $\Sigma$, and the Nahm--Schmid flow lives on the circle that does {\em not} contain $O$. We also observe that the points where $f_1$ or $f_2$ vanish (i.e.\ points for which $t$ lies in $-b+({\rm K}(\kappa)/a)\oZ$) correspond to the even theta characteristics of $\Sigma$.
\end{example}

\section{The ``positive" part of the moduli space}
 
We continue to assume that $\gf=\mathfrak{u}(n)$. To any triple $T_1,T_2,T_3\in \mathfrak{u}(n)$ we associate, as in the previous section,
 the matrix polynomial $T(\zeta)=T_2+{\rm i}T_3+2{\rm i}T_1\zeta+(-T_2+{\rm i}T_3)\zeta^2$. It follows that $T(\zeta)\zeta^{-1}$ is hermitian for each $\zeta$ with $|\zeta|=1$ and we denote by $X^+$ the set of matrix polynomials $T(\zeta)$ such that $T(\zeta)\zeta^{-1}$ is positive definite for each $\zeta$ with $|\zeta|=1$. Observe that, for $n=1$, this is the subset of $\R^{1,2}$ where $x^2-z\bar z>0$, $x>0$.
 \par
 It follows from a theorem of Rosenblatt \cite{Ros} (see also \cite{Mal} for a simple proof) that any $T(\zeta)\in X^+$ can be factorised  as 
 \begin{equation}T(\zeta)=(A+B^\ast\zeta)(B+A^\ast \zeta),\label{AB}\end{equation}
 where $\det(B+A^\ast\zeta)\neq 0$ for $|\zeta|\leq 1$. The only freedom in the factorisation is the following  action of $\mathrm{U}(n)$: $(A,B)\mapsto (Ag^{-1}, gB)$ and, consequently, the factorisation is unique if we assume, in addition, that $B$ is hermitian.
 The following estimate is one reason for our interest in $X^+$.
 \begin{lemma} If $(T_1,T_2,T_3)\in X^+$, then $\|T_2+{\rm i}T_3\|\leq 2\|T_1\|$. \label{bound}
\end{lemma}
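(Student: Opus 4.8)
The plan is to exploit the factorisation \eqref{AB} provided by Rosenblatt's theorem, which converts the analytic positivity hypothesis defining $X^+$ into a purely algebraic identity. Expanding $T(\zeta) = (A + B^\ast\zeta)(B + A^\ast\zeta)$ gives
$$T(\zeta) = AB + (AA^\ast + B^\ast B)\,\zeta + B^\ast A^\ast\,\zeta^2,$$
and comparing with $T(\zeta) = (T_2 + {\rm i}T_3) + 2{\rm i}T_1\,\zeta + (-T_2 + {\rm i}T_3)\zeta^2$ I read off
$$\beta := T_2 + {\rm i}T_3 = AB, \qquad 2{\rm i}T_1 = AA^\ast + B^\ast B,$$
the coefficient of $\zeta^2$ yielding the consistent relation $\beta^\ast = B^\ast A^\ast = (AB)^\ast$. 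The crucial structural point is that the linear coefficient exhibits $2{\rm i}T_1$ as a sum of two positive semidefinite Hermitian matrices; this is the only way in which the positivity of $T(\zeta)\zeta^{-1}$ will be used.

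Next I would reduce the claim to a Hilbert--Schmidt trace inequality. Working with the invariant inner product $\langle X,Y\rangle = -\tr(XY)$ on $\mathfrak{u}(n)$ (the inequality being insensitive to the overall normalisation), the sesquilinear extension to $\mathfrak{u}(n)^\cx$ gives $\|T_2 + {\rm i}T_3\|^2 = \|T_2\|^2 + \|T_3\|^2 = \tr(\beta^\ast\beta)$, the cross term being the trace of the commutator $[T_2,T_3]$, which vanishes; similarly $\|2{\rm i}T_1\|^2 = 4\|T_1\|^2 = \tr\bigl((AA^\ast + B^\ast B)^2\bigr)$. It therefore suffices to prove
$$\tr(\beta^\ast\beta) \;\le\; \tr\bigl((AA^\ast + B^\ast B)^2\bigr).$$
Setting $P := AA^\ast$, $Q := B^\ast B$, $R := A^\ast A$ and $S := BB^\ast$, cyclicity of the trace gives $\tr(\beta^\ast\beta) = \tr(B^\ast A^\ast AB) = \tr(RS)$, while the right-hand side expands as $\tr(P^2) + 2\tr(PQ) + \tr(Q^2)$.

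Finally I would close the gap with three elementary facts. First, Cauchy--Schwarz for the Frobenius inner product together with the arithmetic--geometric mean inequality gives $\tr(RS) \le \sqrt{\tr(R^2)\tr(S^2)} \le \tfrac12\bigl(\tr(R^2) + \tr(S^2)\bigr)$, valid since $R,S$ are positive semidefinite (so $\tr(RS)\ge 0$). Second, cyclicity again yields $\tr(R^2) = \tr\bigl((A^\ast A)^2\bigr) = \tr\bigl((AA^\ast)^2\bigr) = \tr(P^2)$ and likewise $\tr(S^2) = \tr(Q^2)$. Third, $\tr(PQ)\ge 0$, being the trace of a product of two positive semidefinite matrices. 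Combining these,
$$\tr(RS) \;\le\; \tfrac12\bigl(\tr(P^2) + \tr(Q^2)\bigr) \;\le\; \tr(P^2) + 2\tr(PQ) + \tr(Q^2),$$
which is exactly the required inequality; taking square roots gives $\|T_2 + {\rm i}T_3\| \le 2\|T_1\|$.

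I expect no serious analytic obstacle: once the factorisation \eqref{AB} is available, the proof is a chain of standard trace estimates. The only genuine input beyond Rosenblatt's theorem is the nonnegativity $\tr(PQ)\ge 0$ of the cross term, which is precisely what allows the stray factor $\tfrac12$ to be absorbed; the one place demanding care is the bookkeeping identifying $\|T_2 + {\rm i}T_3\|^2$ and $4\|T_1\|^2$ with the relevant Hilbert--Schmidt quantities, after which everything is formal.
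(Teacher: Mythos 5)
Your proof is correct and rests on the same key input as the paper's: the Rosenblatt factorisation \eqref{AB}, from which one reads off $T_2+{\rm i}T_3=AB$ and $2{\rm i}T_1=AA^\ast+B^\ast B$. The paper then finishes in one line, $\|AB\|\le\|A\|\,\|B\|\le\|2{\rm i}T_1\|^{1/2}\|2{\rm i}T_1\|^{1/2}$, which implicitly uses $\|A\|^2\le\|AA^\ast+B^\ast B\|$; that step is immediate for the operator norm (since $AA^\ast\le AA^\ast+B^\ast B$ as positive operators) but is not valid verbatim for the Frobenius norm induced by the invariant inner product, where $\|A\|^2=\tr(AA^\ast)$ is a full trace rather than a largest eigenvalue (for $AA^\ast\approx B^\ast B\approx r^2 1_n$ one has $\tr(AA^\ast)=nr^2$ while $\|AA^\ast+B^\ast B\|_F=2r^2\sqrt{n}$, so the intermediate inequality fails for $n\ge 5$). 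Your chain of trace estimates --- $\tr(B^\ast A^\ast AB)=\tr\bigl((A^\ast A)(BB^\ast)\bigr)\le\tfrac12\bigl(\tr((AA^\ast)^2)+\tr((B^\ast B)^2)\bigr)\le\tr\bigl((AA^\ast+B^\ast B)^2\bigr)$ --- proves the Frobenius-norm statement directly, which is the form actually used in the subsequent properness argument, and your intermediate bound even yields the sharper constant $\sqrt2$ in place of $2$. The bookkeeping identities $\|T_2+{\rm i}T_3\|^2=\|T_2\|^2+\|T_3\|^2$ and $\|2{\rm i}T_1\|^2=4\|T_1\|^2$, the Cauchy--Schwarz step for the Frobenius pairing, and the positivity $\tr(PQ)\ge 0$ for positive semidefinite $P,Q$ are all correctly applied, so there is no gap.
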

\begin{proof} We have $$ \|T_2+{\rm i}T_3\|=\|AB\|\leq \|A\|\|B\|\leq \|2{\rm i}T_1\|^{1/2} \|2{\rm i}T_1\|^{1/2}=2\|T_1\|.$$
\end{proof}
 
 We now consider the Nahm--Schmid flow on $X^+$.
 \begin{definition} The {\em positive} part $\sM^+$ of the moduli space $\sM$ consists of solutions $(T_0(t),T_1(t),T_2(t),T_3(t))$ to the Nahm--Schmid equations such that $(T_1(t),T_2(t),T_3(t))\in X^+$ for some $t$ (equivalently for all $t$).
 \end{definition}

Suppose that $T_1(t),T_2(t),T_3(t)$ satisfy the Nahm--Schmid equations (with $T_0\equiv 0$) and that we have a factorisation \eqref{AB}  depending smoothly on $t$ (we can always find a smooth factorisation, given its uniqueness under the assumptions $B=B^\ast$, $\det(B+A^\ast\zeta)\neq 0$ for $|\zeta|\leq 1$). We can then rewrite equation \eqref{cx} as 
 $$ \left(\dot A-\frac{1}{2}B^\ast B A+\frac{1}{2}ABB^\ast\right)B+A\left(\dot B-\frac{1}{2}A^\ast AB+\frac{1}{2}BAA^\ast\right)=0,
 $$
 and equation \eqref{re} as
 $$ \left(\left(\dot A-\frac{1}{2}B^\ast B A+\frac{1}{2}ABB^\ast\right)A^\ast+B^\ast \left(\dot B-\frac{1}{2}A^\ast AB+\frac{1}{2}BAA^\ast\right)\right)^H=0, $$
 where the superscript $H$ denotes the hermitian part of a matrix. Thus $A(t),B(t)$ satisfy the following equations:
 \begin{eqnarray}\label{AA} \dot A & = & \frac{1}{2}\left(B^\ast B A-ABB^\ast\right)+a,\\ \label{BB}
 \dot B& = &\frac{1}{2}\left(A^\ast AB-BAA^\ast\right)+b,
 \end{eqnarray}
 where $a(t),b(t)$ satisfy
 \begin{equation}\label{ker} a B+Ab=0,\quad a A^\ast+Aa^\ast+b^\ast B+B^\ast b=0\end{equation}
 for all $t$. 

 \begin{lemma} Let $(a,b)$ be a solution of \eqref{ker} and suppose that $\det(B+A^\ast\zeta)\neq 0$ for $|\zeta|\leq 1$. 
 Then there exists a unique $\rho\in \mathfrak{u}(n)$ such that  $a=A\rho$, $b=-\rho B$. 
 \end{lemma}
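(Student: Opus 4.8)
The plan is to solve directly for $\rho$ using the invertibility hypothesis, and then to show that the resulting $\rho$ is automatically skew-adjoint. Note first that $\det(B+A^\ast\zeta)\neq 0$ for $|\zeta|\leq 1$ gives in particular $\det B\neq 0$ at $\zeta=0$, so $B$ is invertible. I would therefore \emph{define} $\rho:=-bB^{-1}$, which forces $b=-\rho B$. The first equation in \eqref{ker}, namely $aB+Ab=0$, then reads $aB=-Ab=A\rho B$, and cancelling the invertible $B$ on the right yields $a=A\rho$. Thus the two desired identities $a=A\rho$, $b=-\rho B$ hold for this $\rho$ regardless of any adjoint condition, and uniqueness is immediate: if $\rho,\rho'$ both work then $(\rho-\rho')B=0$, whence $\rho=\rho'$ since $B$ is invertible.

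The real content is to verify that $\rho=-bB^{-1}$ lies in $\mathfrak{u}(n)$, i.e.\ that the hermitian matrix $\psi:=\rho+\rho^\ast$ vanishes. Substituting $a=A\rho$ and $b=-\rho B$ into the second equation of \eqref{ker} and collecting terms, one finds
\[
aA^\ast+Aa^\ast+b^\ast B+B^\ast b = A(\rho+\rho^\ast)A^\ast-B^\ast(\rho+\rho^\ast)B = A\psi A^\ast-B^\ast\psi B,
\]
so that the second equation in \eqref{ker} is \emph{equivalent} to the single relation $A\psi A^\ast=B^\ast\psi B$. Since $B$ is invertible I would set $C:=(B^\ast)^{-1}A$ (so that $C^\ast=A^\ast B^{-1}$) and multiply this relation by $(B^\ast)^{-1}$ on the left and by $B^{-1}$ on the right; it becomes the fixed-point identity $C\psi C^\ast=\psi$. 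Iterating gives $\psi=C^k\psi(C^\ast)^k$ for every $k\geq 0$.

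The key step, and the main obstacle, is to extract from the invertibility hypothesis that the spectral radius of $C$ is strictly less than $1$; the iteration then finishes the proof. For this I would observe that $Cv=\lambda v$ is equivalent to $(A-\lambda B^\ast)v=0$, and that for $\mu\neq 0$
\[
\det(B^\ast+A\mu)=\mu^n\det\!\left(A-(-\tfrac{1}{\mu})B^\ast\right),
\]
so that an eigenvalue $\lambda$ of $C$ corresponds to a zero of $\det(B^\ast+A\mu)$ at $\mu=-1/\lambda$. Taking adjoints shows that $\det(B+A^\ast\zeta)$ vanishes at $\zeta$ exactly when $\det(B^\ast+A\bar\zeta)$ does; hence the hypothesis that $\det(B+A^\ast\zeta)\neq 0$ on the closed disc $|\zeta|\leq 1$ says precisely that $C$ has no eigenvalue $\lambda$ with $|\lambda|\geq 1$, i.e.\ the spectral radius of $C$ is $<1$. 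By Gelfand's formula $\|C^k\|\to 0$, and therefore $\|\psi\|=\|C^k\psi(C^\ast)^k\|\leq\|C^k\|^2\|\psi\|\to 0$, forcing $\psi=0$. Hence $\rho^\ast=-\rho$, which completes the existence part.
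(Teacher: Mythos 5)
Your proof is correct, and its first half (defining $\rho:=-bB^{-1}$, deducing $a=A\rho$ from $aB+Ab=0$, getting uniqueness from the invertibility of $B$, and reducing the second equation of \eqref{ker} to the fixed-point identity $C\psi C^\ast=\psi$ with $\psi=\rho+\rho^\ast$) coincides with the paper's argument. The two proofs diverge at the final step. The paper disposes of $C\psi C^\ast=\psi$ by isolating a separate linear-algebra lemma: if every pair of eigenvalues of a matrix $X$ satisfies $\lambda_1\bar\lambda_2\neq 1$, then the only hermitian solution of $X^\ast hX=h$ is $h=0$; this is proved by putting $X$ in Jordan form and showing inductively that the sesquilinear form defined by $h$ vanishes on all pairs of cyclic basis vectors. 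You instead exploit the stronger (but here available) fact that the spectral radius of $C=(B^\ast)^{-1}A$ is strictly less than $1$, iterate the fixed-point identity, and conclude via Gelfand's formula that $\|\psi\|\leq\|C^k\|^2\|\psi\|\to 0$. The paper's sub-lemma is more general --- it would still apply if $C$ had eigenvalues on both sides of the unit circle, so long as no product $\lambda_1\bar\lambda_2$ equals $1$ --- whereas your contraction argument is more elementary, needs no Jordan normal form, and has the added merit of spelling out explicitly how the hypothesis $\det(B+A^\ast\zeta)\neq 0$ on the closed unit disc translates into the spectral condition on $C$, a verification the paper leaves implicit when it invokes its sub-lemma.
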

 {\flushleft{\em Proof.}} The assumption implies that $B$ is invertible. We can infer from the first equation in \eqref{ker} that there exists a unique $\rho\in \mathfrak{gl}(n,\cx)$ such that $a=A\rho$, $b=-\rho B$. The second equation can be now rewritten as $$(B^\ast)^{-1}A(\rho+\rho^\ast)A^\ast B^{-1}=\rho+\rho^\ast .$$ The conclusion follows from the following lemma:
\begin{lemma} Let $X$ be a square complex matrix, any two eigenvalues of which satisfy $\lambda_1\ol\lambda_2\neq 1$. Then the only solution of the equation $X^\ast h X=h$ with $h$ hermitian is $h=0$.
\end{lemma}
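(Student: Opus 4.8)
The plan is to regard the equation $X^\ast h X = h$ as a fixed-point equation for the linear operator
$$
L\colon \Mat_n(\C)\longrightarrow \Mat_n(\C),\qquad L(h)=X^\ast h X,
$$
and to show that $1$ is not an eigenvalue of $L$. Once this is established, $L-\mathrm{id}$ is invertible, so the only solution of $L(h)=h$ is $h=0$; in particular this is the only hermitian solution, and the hermiticity assumption plays no essential role. Thus the entire lemma reduces to a spectral computation for $L$.

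First I would compute the spectrum of $L$ by vectorising. Using the standard identity $\operatorname{vec}(AhB)=(B^{T}\otimes A)\,\operatorname{vec}(h)$, the operator $L$ is represented by the matrix $X^{T}\otimes X^\ast$. Since the eigenvalues of a Kronecker product are the pairwise products of the eigenvalues of its factors, and since $X^{T}$ has the same eigenvalues $\lambda_1,\dots,\lambda_n$ as $X$ while $X^\ast$ has eigenvalues $\ol\lambda_1,\dots,\ol\lambda_n$, we obtain
$$
\operatorname{spec}(L)=\{\,\lambda_i\ol\lambda_j : 1\le i,j\le n\,\}.
$$
By the hypothesis we have $\lambda_i\ol\lambda_j\neq 1$ for all pairs $(i,j)$, where the diagonal case $i=j$ is included (it gives $|\lambda_i|^2\neq 1$, and is genuinely needed, as the example of a unitary $X$ with $h=\mathrm{id}$ shows). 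Hence $1\notin\operatorname{spec}(L)$, so $L-\mathrm{id}$ is invertible and $h=0$.

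Should one prefer to avoid Kronecker products, an equivalent route is to Schur-triangularise $X=URU^\ast$ with $U$ unitary and $R$ upper triangular having diagonal entries $\lambda_1,\dots,\lambda_n$; setting $k=U^\ast h U$ turns the equation into $R^\ast k R=k$. Reading off the $(1,1)$-entry forces $|\lambda_1|^2 k_{11}=k_{11}$, whence $k_{11}=0$, and an induction on the entries that exploits the triangular shape of $R$ propagates this to $k=0$. The only genuine subtlety in either approach is that $X$ need not be diagonalisable, so one cannot simply conjugate it to diagonal form; both the Kronecker-product computation and the Schur form handle the general, possibly non-semisimple, case. I therefore expect the sole point requiring care to be the bookkeeping of eigenvalues — ensuring the hypothesis is invoked in the correct orientation $\lambda_i\ol\lambda_j\neq 1$, including the diagonal terms — rather than any analytic difficulty.
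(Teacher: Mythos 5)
Your proof is correct, and it takes a genuinely different route from the paper. The paper's argument puts $X$ in Jordan normal form, reads the equation $X^\ast hX=h$ as saying that $X$ is an isometry of the sesquilinear form defined by $h$, and then shows by induction over pairs of Jordan chains $e_1,\dots,e_k$ and $f_1,\dots,f_l$ (starting from $\langle e_1,f_1\rangle=0$, which uses $\lambda_1\ol\lambda_2\neq 1$) that all pairings $\langle e_i,f_j\rangle$ vanish. You instead treat $h\mapsto X^\ast hX$ as a linear operator on all of $\Mat_n(\C)$, identify it with $X^{T}\otimes X^\ast$ under vectorisation, and conclude from the spectrum $\{\lambda_i\ol\lambda_j\}$ of the Kronecker product that $1$ is not an eigenvalue; this is a standard uniqueness argument for Stein/Sylvester-type equations. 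Both proofs are complete, both handle non-diagonalisable $X$, and both in fact show that hermiticity of $h$ is superfluous. What your version buys is brevity and the explicit observation that the result is purely spectral; what the paper's version buys is a self-contained, coordinate-level argument that does not invoke the eigenvalue rule for Kronecker products. Your fallback via Schur triangularisation is essentially a triangular cousin of the paper's Jordan-block induction, so the two texts are closer there. You correctly flag the one point that needs care, namely that the hypothesis must be read as including the diagonal case $|\lambda_i|^2\neq 1$; the paper's phrase ``two (not necessarily distinct) Jordan blocks'' plays exactly the same role.
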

\begin{proof} We may assume that $X$ is in Jordan form. The equation under consideration means that $X$ is an isometry of the sesquilinear form $\langle\,,\,\rangle$ 
defined by $h$. Let $J_k(\lambda_1), J_l(\lambda_2)$ be two (not necessarily distinct) Jordan blocks of $X$ with $e_1,\dots,e_k$ and $f_1,\dots,f_l$ the corresponding bases of cyclic $X$-modules, i.e. $Xe_1=\lambda_1 e_1$, $Xe_i=\lambda_1 e_i+e_{i-1}$ for $1<i\leq k$, and similarly for the $f_j$. Since $\ol\lambda_1\lambda_2\neq 1$, $\langle e_1,f_1\rangle=0$. Now it follows inductively that $\langle e_i,f_j\rangle=0$ for all $i\leq k,j\leq l$. Thus $h=0$. 
\end{proof}

 We can therefore rewrite \eqref{AA} and \eqref{BB} as
 \begin{eqnarray*} \dot A -A\rho & = & \frac{1}{2}\left(B^\ast B A-ABB^\ast\right),\\ 
 \dot B+\rho B& = &\frac{1}{2}\left(A^\ast AB-BAA^\ast\right).
 \end{eqnarray*}
 These equations are invariant under the following action of $\mathrm U(n)$-valued gauge transformations:
 $$ A\mapsto Ag^{-1},\enskip B\mapsto g B,\enskip \rho\mapsto g\rho g^{-1}-\dot gg^{-1}.$$
 In particular, we can use the gauge freedom to make $\rho$ identically $0$ and obtain the following equations for $A,B$:
 \begin{eqnarray}\label{AAA} \dot A & = & \frac{1}{2}\left(B^\ast B A-ABB^\ast\right),\\ \label{BBB}
 \dot B& = &\frac{1}{2}\left(A^\ast AB-BAA^\ast\right).
 \end{eqnarray}
 These equations are the split signature version of the Basu--Harvey--Terashima equations \cite{Ter,Bpre}. We conclude:
 \begin{theorem} Let $(T_1(t),T_2(t),T_3(t))$, $t\in \R$, be a solution to the reduced Nahm--Schmid equations \eqref{NahmSchmidred} belonging to $\sM^+$. Then there exist smooth functions $A,B:\R\to \Mat_{n,n}(\cx)$ satisfying \eqref{AAA} and \eqref{BBB} such that
 \begin{equation}\label{TAB} T_1(t)=-\frac{{\rm i}}{2}\left(A(t)A^\ast(t)+B^\ast(t)B(t)\right), \quad T_2(t)+{\rm i}T_3(t)=A(t)B(t).\end{equation}
 Conversely, if $A(t),B(t)$ satisfy \eqref{AAA}-\eqref{BBB}, then $T_1(t),T_2(t),T_3(t)$ given by \eqref{TAB} satisfy \eqref{NahmSchmidred}.
\hfill $\Box$
 \end{theorem}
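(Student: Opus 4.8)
The plan is to assemble the computations carried out above into a clean equivalence, handling the two implications in turn.

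For the forward implication, I would begin by justifying the parenthetical claim in the definition of $\sM^+$, namely that lying in $X^+$ at one time is equivalent to lying in $X^+$ for all times. This is immediate from the Lax description \eqref{Laxs}: for $|\zeta|=1$ the matrix $\zeta^{-1}T(\zeta)$ is hermitian, and since \eqref{Laxs} is isospectral the spectrum of $T(\zeta)$ --- and hence of $\zeta^{-1}T(\zeta)$ --- is independent of $t$, so positive-definiteness for all $|\zeta|=1$ is a conserved condition. Granting this, I would apply Rosenblatt's theorem to obtain at each $t$ the factorisation \eqref{AB} with $B$ hermitian and $\det(B+A^\ast\zeta)\neq0$ for $|\zeta|\leq1$. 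Expanding $(A+B^\ast\zeta)(B+A^\ast\zeta)$ and matching the coefficients of $1,\zeta,\zeta^2$ against $T(\zeta)=(T_2+{\rm i}T_3)+2{\rm i}T_1\zeta+(-T_2+{\rm i}T_3)\zeta^2$ reproduces exactly the relations \eqref{TAB}. The one analytic point to settle here is \emph{smoothness}: because the normalised factorisation ($B=B^\ast$) is unique and $T(\zeta)$ varies smoothly with $t$, one obtains $A(t),B(t)$ depending smoothly, and globally, on $t$.

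Next I would feed \eqref{TAB} into the complex and real equations \eqref{cx}--\eqref{re}. As computed above, this yields \eqref{AA}--\eqref{BB}, in which the correction terms $a(t),b(t)$ are constrained to the kernel \eqref{ker}. The two lemmas identify that kernel completely: $a=A\rho$ and $b=-\rho B$ for a unique $\rho\in\un(n)$. Substituting and using the $\Un(n)$-gauge invariance $A\mapsto Ag^{-1}$, $B\mapsto gB$, $\rho\mapsto g\rho g^{-1}-\dot gg^{-1}$, I would solve the linear ODE $\dot g=g\rho$ --- which has a global solution with $g(t)\in\Un(n)$, since $\rho$ is smooth and skew-hermitian --- in order to arrange $\rho\equiv0$. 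This leaves precisely \eqref{AAA}--\eqref{BBB}. Since this gauge transformation preserves $AB$, $AA^\ast$ and $B^\ast B$, it fixes $T_1,T_2,T_3$, so the relations \eqref{TAB} continue to hold.

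For the converse I would run the argument backwards. Given $A,B$ solving \eqref{AAA}--\eqref{BBB}, I define $T_1,T_2,T_3$ by \eqref{TAB}; they are manifestly skew-hermitian, hence genuinely $\un(n)$-valued. It then remains a direct substitution to verify that $\beta=AB$ and $\alpha=-\frac{1}{2}(AA^\ast+B^\ast B)$ satisfy $\dot\beta+[\alpha,\beta]=0$ and $\dot\alpha+\dot\alpha^\ast+[\alpha,\alpha^\ast]-[\beta,\beta^\ast]=0$, i.e.\ \eqref{cx}--\eqref{re}, which are equivalent to \eqref{NahmSchmidred} with $T_0\equiv0$. Concretely, expanding $\dot AB+A\dot B$ via \eqref{AAA}--\eqref{BBB} produces cubic terms that cancel in pairs against $[\alpha,\beta]$, and the real equation follows by the analogous cancellation. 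The algebraic heart of the theorem --- the kernel computation producing $\rho$ --- is already dispatched by the two lemmas, so I expect no essential difficulty there; the only steps demanding genuine care are the smooth, global-in-$t$ choice of the Rosenblatt factorisation and the global solvability of the gauge-fixing ODE, both of which succeed because the relevant normalisations are unique and the data smooth. The converse is a routine computation.
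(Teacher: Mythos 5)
Your proposal is correct and follows essentially the same route as the paper: Rosenblatt factorisation made smooth via the uniqueness of the hermitian normalisation, reduction of \eqref{cx}--\eqref{re} to \eqref{AA}--\eqref{BB} with the kernel terms identified as $a=A\rho$, $b=-\rho B$ by the two lemmas, a unitary gauge transformation solving $\dot g=g\rho$ to kill $\rho$ (which indeed fixes $AB$, $AA^\ast$ and $B^\ast B$, hence the $T_i$), and a direct substitution for the converse. Your added verification that membership in $X^+$ is conserved by isospectrality of the Lax flow is a correct filling-in of the parenthetical claim the paper leaves implicit.
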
 

 \begin{remark} 1. The quantity $\tr(AA^\ast+B^\ast B)$ is an invariant of the flow \eqref{AAA}-\eqref{BBB}. It is therefore a flow on a sphere.\newline
 2. The flow \eqref{AAA}-\eqref{BBB} is preserved under the involution $A\leftrightarrow B$. In particular, there is a subset of solutions with $A=B$, i.e.
 satisfying 
 $$\dot A= \frac{1}{2}\left(A^\ast A^2-A^2A^\ast\right)=\frac{1}{2}\left[A^\ast A+AA^\ast, A\right].$$
 \end{remark}
 
 As an application, we obtain additional information about the product structures of $\sM^+$.
 \begin{proposition} The map $\bigl(\ol{\sM^+},S\bigr)\to {\rm T}^\ast G\times {\rm T}^\ast G$ defined in \S\ref{product} is proper.
 \end{proposition}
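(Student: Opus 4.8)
The plan is to use the finite-dimensional model $\mathcal M\cong G\times\gf^3$ of Theorem~\ref{MGg3}: since $\gf=\un(n)$ is finite-dimensional, properness reduces to the statement that the preimage of any compact set is closed and bounded. Recall that under $\Psi$ a gauge class is recorded by $(u_0(1),T_1(0),T_2(0),T_3(0))$, that the initial values $T_i(0)$ ($i\ge 1$) are $\mathcal G_{00}$-invariant, and that $\sM^+$ corresponds to $G\times X^+$, so that $\ol{\sM^+}=\Psi^{-1}(G\times\ol{X^+})$. I would fix a compact $K\subset{\rm T}^\ast G\times{\rm T}^\ast G$, note that its preimage is closed in $\mathcal M$ (the map is continuous and $\ol{\sM^+}$ is closed) and that the two $G$-factors of any point of the preimage automatically lie in the compact group $G$. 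Everything then reduces to bounding the fibre (momentum) coordinates.

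Next I would identify those momentum coordinates explicitly. In the reduced gauge $T_0\equiv 0$, the map $P$ of Proposition~\ref{prop:Prod} sends a solution to $\bigl((T_2,T_1+T_3),(-T_2,T_1-T_3)\bigr)$, and one checks that $(A_i,B_i)$ with $B_1=T_1+T_3$, $B_2=T_1-T_3$ indeed solve $\dot B_i+[A_i,B_i]=0$. The symplectic-quotient identification $\mathcal P\cong{\rm T}^\ast G\cong G\times\gf$ reads off the cotangent fibre of the $i$-th factor as $B_i(0)$; since the reducing gauge transformation is the identity at $t=0$, this is exactly $T_1(0)\pm T_3(0)$ and is $\mathcal G_{00}$-invariant, hence well-defined on $\mathcal M$. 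Thus $K$ compact forces $\|B_1(0)\|$ and $\|B_2(0)\|$ to be bounded on the preimage, so that $T_1(0)=\tfrac12(B_1(0)+B_2(0))$ and $T_3(0)=\tfrac12(B_1(0)-B_2(0))$ are bounded.

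The crucial remaining point is to bound $T_2(0)$, which is \emph{not} controlled by the two momentum maps; this is exactly where the positivity defining $X^+$ enters. By Lemma~\ref{bound}, on $X^+$ one has $\|T_2+{\rm i}T_3\|\le 2\|T_1\|$, i.e.\ $\|T_2\|^2+\|T_3\|^2\le 4\|T_1\|^2$; as a closed inequality it persists on $\ol{X^+}$ by continuity. Applied at $t=0$ it yields $\|T_2(0)\|\le 2\|T_1(0)\|$, which is already bounded. Hence the whole initial datum $(T_1(0),T_2(0),T_3(0))$ remains in a bounded subset of $\gf^3$, and together with the compact $G$-factor the preimage of $K$ lies in a compact subset of $\mathcal M\cong G\times\gf^3$; being closed, it is compact, which proves properness.

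The main obstacle I anticipate is the bookkeeping of the second step: verifying carefully that the cotangent fibre coordinate of the image genuinely equals $B_i(0)=T_1(0)\pm T_3(0)$ (rather than an $\Ad$-conjugate), and that the passage to the closure $\ol{\sM^+}=\Psi^{-1}(G\times\ol{X^+})$ does not invalidate the factorisation-based estimate of Lemma~\ref{bound}. The conceptual point, however, is transparent: the two momentum maps determine $T_1$ and $T_3$ but are blind to $T_2$, and it is precisely the positivity condition that provides the missing a priori bound on $T_2$.
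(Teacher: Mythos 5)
Your proof is correct and takes essentially the same route as the paper: both read off the cotangent-fibre coordinates of the image as $T_1(0)\pm T_3(0)$, thereby bound $T_1(0)$ on the preimage of a compact set, and then use the factorisation estimate of Lemma~\ref{bound} (which indeed survives passage to $\ol{X^+}$ as a closed inequality) to bound $T_2(0)$ and $T_3(0)$. The only minor difference is the last step — the paper bounds the conserved quantity $C$ and invokes Arzel\`a--Ascoli in the space of solutions, while you appeal to the diffeomorphism $\sM\cong G\times\gf^3$ and Heine--Borel — and both are valid.
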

 \begin{proof} The map under consideration is given by
 $$\phi(T_0(t),T_1(t),T_2(t),T_3(t))= \bigl(T_1(0)+T_3(0),u_1(1),T_1(0)-T_3(0),u_2(1)\bigr),$$
 where $u_1(t)$ (resp. $u_2(t)$) satisfies $T_0+T_2=-\dot u_1u^{-1}_1$, $u_1(0)=1$ (resp. $T_0-T_2=-\dot u_2u^{-1}_2$, $u_2(0)=1$).
Let $K$ be a compact set in $T^\ast G\times T^\ast G$. Then $\|T_1(0)\|$ is bounded on $\phi^{-1}(K)\cap  \ol{\sM^+}$ by a constant $c_K$. Lemma \ref{bound} implies that $\|T_2(0)\|^2+\|T_3(0)\|^2 \leq 4c_K^2$, and hence the conserved quantity $C$ of Proposition~\ref{Conserved}  is bounded by $ 6c_K^2$. The Nahm--Schmid equations and the Arzel\`a-Ascoli theorem imply now that $\phi^{-1}(K)$ is compact in $\sM$.                                                                                                    
 \end{proof}
 
\section{Other moduli spaces of solutions to the Nahm--Schmid equations}

We want to briefly discuss two other moduli spaces: solutions  on a half-line and solutions on $[0,1]$ which are symmetric at both ends.

\subsection{The moduli space of solutions on a half-line}

The reduced Nahm--Schmid equations have the form $\dot{x}=V(x)$, where $x=(T_1,T_2,T_3)\in \gf^3$ and $V$ is the vector field given by $V(x)=\bigl([T_3,T_2], [T_3,T_1], [T_1,T_2]\bigr)$.  The critical points of $V$ consist of commuting triples $(T_1,T_2,T_3)$. Let $(\tau_1,\tau_2,\tau_3)$ be a {\em regular} commuting triple, i.e. the centraliser of the triple is a Cartan subalgebra $\hf$. Let $\gf=\bigoplus_{\lambda\in \hf^\ast} \gf_\lambda$ be the root decomposition of $\gf$ with respect to $\hf$. A simple calculation shows that the differential $DV$ at $(\tau_1,\tau_2,\tau_3)$ preserves the subspace $U_{\lambda}=\gf_\lambda\oplus \gf_\lambda \oplus \gf_\lambda$ and $DV_{|U_\lambda}$ has eigenvalues $0, \pm\bigl(\lambda(\tau_2)^2+\lambda(\tau_3)^2-\lambda(\tau_1)^2\bigr)^{1/2}$. Thus, as long as $(\ad \tau_2)^2+ (\ad \tau_3)^2-(\ad \tau_1)^2$ has no negative eigenvalues, the differential of $V$ at the critical point $(\tau_1,\tau_2,\tau_3)$ has no imaginary eigenvalues. We shall cal
 l such a triple (irrespectively of being regular or not) {\em stable}. Observe that, in particular, the triple $(\tau_1,0,0)$ is stable.
\par
General results about the asymptotic behaviour of the solutions of ODEs (see e.g.\ \cite{Kov} for a discussion of this in the context of Nahm's equations) imply that any solution to the Nahm--Schmid equations on $[0,\infty)$, the limit of which is a regular stable triple, approaches this limit exponentially fast, i.e.\ there exists an $\eta>0$ such that $\|T_i(t)-\tau_i\|\leq \text{ const}\cdot {\rm e}^{-\eta t}$. Any $\eta$ smaller than the smallest positive eigenvalue of $D_{(\tau_1,\tau_2,\tau_3)}V$ will do.
\par
Let $(\tau_1,\tau_2,\tau_3)$ be a regular stable triple and let $\eta>0$ be as above. We introduce (cf.\ \cite{KronJLMS}) the Banach space $\Omega_1$ consisting of $C^1$-maps $f:[0,\infty)\to \gf$ such that the norm
\begin{equation} \label{norm1} \|f\|_{\Omega_1}:=\sup_t \left\| {\rm e}^{\eta t}f(t)\right\|+ \sup_t \left\| {\rm e}^{\eta t}f^\prime(t)\right\|\end{equation}
is finite. Let us write $\tau_0=0$ and set 
$$\mathcal A_{\tau_1,\tau_2,\tau_3}:=\{(T_0,T_1,T_2,T_3):[0,\infty)\to \gf\otimes \oR^4\ |\ T_i-\tau_i\in \Omega_1,\enskip i=0,1,2,3\}.$$
The Banach group
$$\mathcal G=\{u:[0,\infty)\to G\ |\ g(0)=1,\;\dot gg^{-1}\in \Omega_1, \Ad(g) \tau_i\in \Omega_1,\enskip i=1,2,3\}$$
acts smoothly on the Banach manifold $\mathcal A_{\tau_1,\tau_2,\tau_3}$ via gauge transformations and we define
\begin{equation*} \mathcal M_{\tau_1,\tau_2,\tau_3}= \{\mathcal T\in\mathcal A\ | \ \text{$\mathcal T$ solves equations (\ref{NahmSchmidUNred})} \}/\mathcal G.
\end{equation*}
As in \cite{KronJLMS}, $\mathcal M_{\tau_1,\tau_2,\tau_3}$ can be also described as the set of solutions to the reduced Nahm--Schmid equations such that  $\lim_{t\to\infty} T_i(t)=\Ad(g_0)\tau_i$, $i=1,2,3$, for some $g_0\in G$.

\begin{theorem}
$\mathcal M_{\tau_1,\tau_2,\tau_3}$ is a smooth Banach manifold diffeomorphic to the tangent bundle ${\rm T}( G/T)$ of $G/T$, where $T$ is a maximal torus in $G$.  
\end{theorem}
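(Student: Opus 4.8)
The plan is to follow the half-line analysis of Kronheimer \cite{KronJLMS} for Nahm's equations, adapting it to the split-signature gradient-type flow $\dot x = V(x)$. First I would establish that $\mathcal M_{\tau_1,\tau_2,\tau_3}$ is a smooth Banach manifold by the implicit function theorem, exactly as in Proposition \ref{NSsmooth} and Theorem \ref{MGg3}, but now working in the weighted spaces $\Omega_1$. The new analytic ingredient is that, after linearising the equations at a solution and passing to the $T_0\equiv 0$ slice, the relevant operator agrees at infinity with $\frac{\mathrm d}{\mathrm dt} - D_{(\tau_1,\tau_2,\tau_3)}V$ up to a term decaying like ${\rm e}^{-\eta t}$; since $\eta$ is chosen strictly between $0$ and the smallest positive eigenvalue of $D_{(\tau_1,\tau_2,\tau_3)}V$, this constant-coefficient asymptotic operator is invertible on $\Omega_1$, so the linearisation is Fredholm and one produces a bounded right inverse as before. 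Freeness and properness of the gauge action follow as in Lemma \ref{freeproper}, the exponential weight supplying the compactness needed for properness.

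Next I would construct a fibration over $G/T$. To a solution I associate its limit $\lim_{t\to\infty}(T_1,T_2,T_3) = \Ad(g_0)(\tau_1,\tau_2,\tau_3)$, which exists and is attained exponentially fast by the asymptotic estimates recalled above (and is globally well posed by Corollary \ref{GlobalReg}). As $(\tau_1,\tau_2,\tau_3)$ is regular, its stabiliser in $G$ is the maximal torus $T$, so the limit is a well-defined point $g_0T$ of the adjoint orbit, which is diffeomorphic to $G/T$. This yields a smooth $G$-equivariant map $\pi:\mathcal M_{\tau_1,\tau_2,\tau_3}\to G/T$.

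The core of the argument is to identify the fibres of $\pi$. Fixing the standard $T_0\equiv 0$ gauge (a complete slice, since a residual gauge transformation with $u(0)=1_G$ preserving $T_0\equiv 0$ must be constant and hence trivial), points of $\mathcal M_{\tau_1,\tau_2,\tau_3}$ over $g_0T$ are precisely the solutions of the reduced Nahm--Schmid flow converging exponentially to the critical point $\Ad(g_0)(\tau_1,\tau_2,\tau_3)$. By the stable manifold theorem, such a fibre is a smooth submanifold diffeomorphic to the stable subspace $E^s_{g_0}$ of $DV$ at that critical point, the exponential-decay condition built into $\Omega_1$ ensuring that we capture the genuine stable manifold rather than a centre-stable one (the centre directions being exactly the motion within the adjoint orbit, already accounted for by $g_0$). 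By the eigenvalue computation preceding the theorem, on each root space $U_\lambda=\gf_\lambda^{\,3}$ the operator $DV$ has eigenvalues $0,\pm\bigl(\lambda(\tau_2)^2+\lambda(\tau_3)^2-\lambda(\tau_1)^2\bigr)^{1/2}$, so stability produces exactly one stable line in each $U_\lambda$; summing over all roots gives $\dim_{\R}E^s_{g_0}$ equal to the number of roots, that is $\dim(G/T)$. Since $T$ fixes $(\tau_1,\tau_2,\tau_3)$ and acts diagonally on $U_\lambda$ through the character $\lambda$, this stable line carries the $T$-representation $\gf_\lambda$, whence $E^s_{g_0}$ and $\gf/\hf = {\rm T}_{g_0T}(G/T)$ are isomorphic as $T$-modules. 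Therefore the bundle $\mathcal M_{\tau_1,\tau_2,\tau_3}\to G/T$ is the associated bundle $G\times_T(\gf/\hf) = {\rm T}(G/T)$.

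The main obstacle will be the weighted analysis at infinity: one has to check carefully that the chosen weight $\eta$ makes the asymptotic constant-coefficient operator invertible and that the resulting Fredholm index matches the dimension of the stable manifold, so that the exponential-decay condition carves out exactly $E^s_{g_0}$ and excludes the centre directions tangent to the orbit. Once this is in place, the gauge-slice construction and the stable-manifold description assemble, as in \cite{KronJLMS}, into a smooth $G$-equivariant diffeomorphism $\mathcal M_{\tau_1,\tau_2,\tau_3}\cong {\rm T}(G/T)$.
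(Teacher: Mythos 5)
Your proposal is correct and takes essentially the same route as the paper: smoothness via the slice construction of Theorem \ref{MGg3} adapted to the weighted space $\Omega_1$ (the paper simply replaces the constant $X_0$ in the slice by $X_0(t)=v\,{\rm e}^{-2\eta t}$), identification of the $T_0\equiv 0$ solutions converging to $\Ad(g_0)(\tau_1,\tau_2,\tau_3)$ with the stable subspace of $D_{(\tau_1,\tau_2,\tau_3)}V$, which is $\bigoplus_{\lambda}\gf_\lambda\cong\gf/\hf$ as a $T$-module, and the associated-bundle description $G\times_T(\gf/\hf)\cong {\rm T}(G/T)$. The paper phrases the last step via its preceding remark that $\mathcal M_{\tau_1,\tau_2,\tau_3}$ is the set of reduced solutions with limit $\Ad(g_0)\tau_i$ rather than as an explicit fibration over $G/T$, but the argument is the same.
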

\begin{proof} The fact that $\mathcal M_{\tau_1,\tau_2,\tau_3}$ is a smooth Banach manifold is proved analogously to Theorem \ref{MGg3}, once we replace the constant $X_0$ in the definition of the slice $\mathcal S_{\mathcal T}$ with $X_0(t)=v{\rm e}^{-2\eta t}$, $v\in \gf$. To identify $\mathcal M_{\tau_1,\tau_2,\tau_3}$ we observe that the space of solutions to the reduced Nahm--Schmid equations in a small neighbourhood of the critical point $(\tau_1,\tau_2,\tau_3)$ and converging to this critical point is identified with the sum of negative eigenspaces of $D_{(\tau_1,\tau_2,\tau_3)}V$ (since the triple is stable). By virtue of the arguments given at the beginning of this section, this space is isomorphic to $\p=\bigoplus_{\lambda>0}\gf_\lambda\oplus\gf_{-\lambda}$.
 Since every solution to the Nahm--Schmid equations exists for all time, we conclude that the submanifold of  $\mathcal M_{\tau_1,\tau_2,\tau_3}$ consisting of solutions with $T_0\equiv 0$ is diffeomorphic to $\p$. Now the remark made just before the statement of the theorem shows that $\mathcal M_{\tau_1,\tau_2,\tau_3}$ is diffeomorphic to $G\times_T \p\simeq {\rm T} (G/T)$.
\end{proof}

We now would like to view $\mathcal M_{\tau_1,\tau_2,\tau_3}$ as the hypersymplectic quotient of $\mathcal A_{\tau_1,\tau_2,\tau_3}$ by $\mathcal G$. As for the moduli space of solutions on an interval, the hypersymplectic structure is not defined on the degeneracy locus, i.e. on the set of solutions where the linear operator \eqref{linearop}, acting on ${\rm Lie}(\mathcal G)$, has nontrivial kernel.
We have:
\begin{proposition} Let $(\tau_1,\tau_2,\tau_3)$ be a regular stable triple. There exists a neighbourhood $U$ of $G/T$ (which is the $G$-orbit of the constant solution $(0,\tau_1,\tau_2,\tau_3)$) in $\mathcal M_{\tau_1,\tau_2,\tau_3}$ which is disjoint from the degeneracy locus $\mathcal D$.\end{proposition}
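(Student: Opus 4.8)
The plan is to prove first that the constant solution $\mathcal T_0 = (0,\tau_1,\tau_2,\tau_3)$ itself lies outside $\mathcal D$, and then to spread this to a neighbourhood of the whole orbit $G/T$ using the $G$-invariance of $\mathcal D$ together with an openness argument. Since $\mathcal D$ is gauge invariant, I would work with representatives satisfying $T_0\equiv 0$, so that by Proposition \ref{previousprop} membership in $\mathcal D$ is equivalent to the existence of a nontrivial $\xi\in\mathrm{Lie}(\mathcal G)$ in the kernel of $\Delta_\mathcal T\xi=\ddot\xi+A(t)\xi$, where $A(t)=(\ad T_1(t))^2-(\ad T_2(t))^2-(\ad T_3(t))^2$.

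First I would treat $\mathcal T_0$, where $A(t)\equiv A_0:=(\ad\tau_1)^2-(\ad\tau_2)^2-(\ad\tau_3)^2$. The stability hypothesis is exactly the assertion that $-A_0=(\ad\tau_2)^2+(\ad\tau_3)^2-(\ad\tau_1)^2\geq 0$, i.e.\ $A_0\leq 0$. Given $\xi$ in the kernel, I would set $f=\|\xi\|^2$; then $\ddot f=2\|\dot\xi\|^2-2\langle A_0\xi,\xi\rangle\geq 0$, so $f$ is convex. Because $\xi\in\mathrm{Lie}(\mathcal G)$, its Cartan component (on which $A_0$ vanishes) solves $\ddot\xi_{\hf}=0$ and is therefore affine; the decay condition $\dot\xi\in\Omega_1$ together with $\xi(0)=0$ forces it to vanish, while the root components decay by the requirement $[\xi,\tau_i]\in\Omega_1$. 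Hence $f(0)=0$, $f\geq 0$ and $f(t)\to 0$; a bounded convex function on $[0,\infty)$ is non-increasing, so $f\equiv 0$ and $\xi\equiv 0$. Thus $\mathcal T_0\notin\mathcal D$, and equivalently the operator $\Delta_{\mathcal T_0}=\tfrac{\mathrm d^2}{\mathrm dt^2}+A_0$ has trivial kernel on the weighted spaces attached to $\mathcal G$; since $\eta$ was chosen below the smallest positive eigenvalue of $D_{(\tau_1,\tau_2,\tau_3)}V$ (the least positive indicial root), $\Delta_{\mathcal T_0}$ is in fact an isomorphism.

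Next I would spread this statement. The operator $\Delta_\mathcal T$ differs from $\Delta_{\mathcal T_0}$ by multiplication by $A(t)-A_0$. Solutions in a slice transverse to the $G$-orbit through $\mathcal T_0$ that are close to $\mathcal T_0$ lie on the stable manifold of the critical point and so remain uniformly $C^0$-close to it for all $t\geq 0$ by the exponential convergence recalled at the start of the section; hence $\sup_t\|A(t)-A_0\|$ is small, $\Delta_\mathcal T$ is a small operator-norm perturbation of an isomorphism, and it stays injective. This shows a slice-neighbourhood of $\mathcal T_0$ is disjoint from $\mathcal D$. Finally, since the residual $G$-action is by isometries it preserves $\mathcal D$ (isometries preserve the degeneracy locus, exactly as in \S\ref{hsinterpret}); translating the slice-neighbourhood over the compact orbit $G/T=G\cdot\mathcal T_0$ yields the desired neighbourhood $U$ disjoint from $\mathcal D$.

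The main obstacle is the spreading step, not the computation at $\mathcal T_0$. The delicate point is that stability gives only $A_0\leq 0$ and not strict negativity, so the convexity estimate is \emph{not} preserved under perturbation: a nearby solution may have $A(t)$ with small positive eigenvalues, and $f$ need no longer be convex. One therefore genuinely needs the functional-analytic formulation, namely that $\Delta_{\mathcal T_0}$ is an isomorphism on the correct weighted half-line spaces (with the weight $\eta$ between $0$ and the least positive indicial root, and with the Cartan part of $\mathrm{Lie}(\mathcal G)$ permitted to have a finite limit at infinity) and that $\mathcal T\mapsto\Delta_\mathcal T$ is norm-continuous there. Verifying that the index is zero and controlling the operator-norm smallness on these spaces is where the real work lies.
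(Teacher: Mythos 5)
Your proof is correct and follows essentially the same route as the paper: the paper reduces to showing that the constant solution alone is not in $\mathcal D$, and then runs exactly your convexity argument ($\|\xi\|^2$ is convex by stability, vanishes at $0$, has a finite limit at infinity, hence is identically zero). The only difference is that the paper compresses your entire ``spreading'' step into the single sentence ``it is enough to show that the constant solution itself does not belong to $\mathcal D$'', implicitly invoking $G$-invariance of $\mathcal D$ together with openness of its complement; your observation that this openness must come from the Fredholm/isomorphism formulation of $\Delta_{\mathcal T}$ on the weighted half-line spaces, rather than from perturbing the non-strict convexity estimate, is accurate and supplies the justification the paper leaves to the reader.
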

\begin{proof} It is enough to show that the constant solution itself does not belong to $\mathcal D$. As in the proof of Proposition 
\ref{Prop:DegenLoc} we are asking whether the exists a nonzero $\xi\in {\rm Lie}(\mathcal G)$ such that
$$\ddot\xi=\bigl((\ad \tau_2)^2+ (\ad \tau_3)^2-(\ad \tau_1)^2\bigr)(\xi).$$
We can diagonalise this equation and conclude, since the triple is stable, that $\|\xi\|^2$ is a convex function. Since $\xi(0)=0$ and $\xi$ has a finite limit at $t=\infty$, such a $\xi$ must be identically zero.
\end{proof}

As for the moduli space $\mathcal M$, we can conclude that, away from the degeneracy locus, $\mathcal M_{\tau_1,\tau_2,\tau_3}$ is a smooth hypersymplectic manifold.
Its complex and product structures are described by the following result, proved by combining the arguments of \S\ref{complex}--\ref{product} and  of \cite{KronJLMS}.
\begin{theorem} There exists a neighbourhood $U$ of $G/T$ in  $\mathcal M_{\tau_1,\tau_2,\tau_3}$ not intersecting the degeneracy locus $\mathcal D$ such that:
\begin{itemize}
\item[(i)] if $\tau_2+{\rm i}\tau_3$ is a regular semisimple element of $\gf^\cx$, then $(U,I)$ is biholomorphic  to an open neighbourhood of $\Ad (G)(\tau_2+{\rm i}\tau_3)$ in the complex adjoint orbit of $\tau_2+{\rm i}\tau_3$;
\item[(ii)] if $\tau_2=\tau_3=0$, then $(U,I)$ is biholomorphic   to an open neighbourhood of $G^\cx/B$ in ${\rm T}( G^\cx/B)$, where $B$ is a Borel subgroup of $G^\cx$;
\item[(iii)] with respect to the product structure $S$, $U$ is isomorphic to an open neighbourhood of the diagonal $G$-orbit of $(\tau_1+\tau_3,\tau_1-\tau_3)$ in $O_+\times O_-$, where $O_\pm$ is the adjoint $G$-orbit of $\tau_1\pm \tau_3$.
\end{itemize}
\end{theorem}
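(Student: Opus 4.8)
The plan is to mirror, in the weighted half-line setting, the two quotient descriptions of the moduli space established in \S\ref{complex} (for the complex structure $I$) and \S\ref{product} (for the product structure $S$), and to combine them with the asymptotic analysis of \cite{KronJLMS}. In all three cases the strategy is the same: realise $(U,I)$ (resp. $(U,S)$) as the space of solutions to the complex equation \eqref{cx} (resp. the paracomplex system \eqref{paracxreal}) modulo the complexified (resp. paracomplexified) gauge group, and then invoke the fact---proved exactly as the Theorem in \S\ref{complex}---that near the constant solution each such orbit meets the real (resp. Coulomb) locus in a single compact-gauge orbit. The entry point is the Proposition above, which shows that the constant solution $(0,\tau_1,\tau_2,\tau_3)$ does not lie in $\mathcal D$, so that the operator $\Delta_{\mathcal T}$ of \eqref{linearop} is invertible there; this is precisely the hypothesis needed to run the implicit function theorem on a neighbourhood $U$ of $G/T$.

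For (i), I would exploit that the complex equation $\dot\beta + [\alpha,\beta]=0$ preserves the $G^\cx$-conjugacy class of $\beta(t)$: writing $\beta(t)=\Ad(u_0(t))\beta(0)$ for the locally defined $u_0$ with $u_0(0)=1$ solving $\dot u_0 u_0^{-1}=-\alpha$, and using the asymptotic condition $\beta(t)\to\Ad(g_0)(\tau_2+{\rm i}\tau_3)$ together with the closedness of a regular semisimple orbit, one sees that $\beta(0)$ lies in $\mathcal{O}^\cx:=\Ad(G^\cx)(\tau_2+{\rm i}\tau_3)$. Since complexified gauge transformations equal the identity at $t=0$, the assignment $\mathcal T\mapsto\beta(0)$ is gauge-invariant and holomorphic in the coordinates $(\alpha,\beta)$, so it defines a holomorphic map $\Phi_I\colon(U,I)\to\mathcal{O}^\cx$ carrying the constant solution to $\tau_2+{\rm i}\tau_3$ and $G/T$ onto the compact orbit $\Ad(G)(\tau_2+{\rm i}\tau_3)$; matching $\Phi_I$ with the identification of the complex-gauge quotient with $\mathcal{O}^\cx$ via the existence/uniqueness statement shows it is a local biholomorphism, consistent with $\dim_\R U=\dim_\R\mathcal{O}^\cx=2(\dim G-\dim T)$. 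Case (ii) is the degenerate limit $\tau_2=\tau_3=0$, where $\beta\to 0$ while $\alpha\to-{\rm i}\,\Ad(g_0)\tau_1$ is regular; now $\beta(0)$ no longer separates orbits, and instead one records the Borel subalgebra singled out by the decay of $\beta$ relative to the limiting regular $\alpha$ (a point of $G^\cx/B$) together with the leading coefficient of $\beta$ (the fibre direction). This is exactly Kronheimer's picture, so I would import from \cite{KronJLMS} the biholomorphism between the complex-gauge quotient and a neighbourhood of the zero section in ${\rm T}(G^\cx/B)$ and conclude as before.

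For (iii), I would run the analogous argument with the paracomplex data of \S\ref{product}. The two flatness equations in \eqref{paracxreal} each preserve the $G$-conjugacy class of $B_i(t)$, and the asymptotics $B_1\to\Ad(g_0)(\tau_1+\tau_3)$, $B_2\to\Ad(g_0)(\tau_1-\tau_3)$ place $(B_1(0),B_2(0))$ in $O_+\times O_-$. Modulo the paracomplexified gauge group $\mathcal G\times\mathcal G$ this pair is the complete invariant, so $\mathcal T\mapsto(B_1(0),B_2(0))$ identifies the paracomplex-gauge quotient with $O_+\times O_-$, sending the constant solution to the base point of the diagonal $G$-orbit. The third (``Coulomb'') equation in \eqref{paracxreal}, together with the residual diagonal gauge freedom, then cuts this down---again by the implicit-function argument at the non-degenerate constant solution---to a neighbourhood of that diagonal orbit, and the identification respects $S$ because the map $P$ of Proposition \ref{prop:Prod} intertwines $S$ with ${\rm id}\oplus(-{\rm id})$.

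The main obstacle is the analytic core common to all three parts: showing, on the half-line with the exponentially weighted norms \eqref{norm1}, that every complexified (resp. paracomplexified) gauge orbit near the constant solution meets the real (resp. Coulomb) locus in a single compact-gauge orbit. The non-degeneracy of the constant solution makes the linearised problem invertible and so yields the local statement by the implicit function theorem, but controlling the non-compact domain and the decay conditions---so that the gauge transformations produced remain in the weighted Banach groups and the resulting sections have the normal bundle required off $\mathcal D$---is the delicate point, and is exactly where the estimates of \cite{KronJLMS} are indispensable.
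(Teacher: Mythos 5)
Your proposal is correct and follows essentially the same route as the paper, whose proof consists precisely of the instruction to combine the arguments of \S\ref{complex}--\ref{product} with those of \cite{KronJLMS}. You correctly identify the two key ingredients: the non-degeneracy of the constant solution (from the preceding Proposition) as the hypothesis for the implicit-function-theorem step matching complexified (resp.\ paracomplexified) gauge orbits with real (resp.\ Coulomb) solutions, and the weighted half-line estimates of \cite{KronJLMS} as the analytic core making that step work on the non-compact domain.
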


\begin{remark} 1. With extra care, one should be able to show, as in  \cite{Biq},  that in the case when $\tau_2+{\rm i}\tau_3$ is a nonzero nonregular element of $\gf^\cx$, $U$ is biholomorphic to a neighbourhood of the zero section in a vector bundle over $G^\cx/P$, where $P$ is an appropriate parabolic subgroup.\newline
2. Other complex and product structures of $U\subset \mathcal M_{\tau_1,\tau_2,\tau_3}$  are easily identified by observing that such a complex or product structure $W$ can be rotated via an element $A\in {\rm SO}(1,2)$ to $W_0=I$ or $W_0=S$, and that $(U,W)$ is isomorphic to $(U^\prime,W_0)$, where $U^\prime$ is the corresponding open subset of $\mathcal M_{A(\tau_1,\tau_2,\tau_3)}$. 
\end{remark}

\subsection{Hypersymplectic quotients by compact subgroups}

We return to $\mathcal M$, the moduli space of solutions on $[0,1]$. As observed in \S\ref{actions}, $\mathcal M$ possesses a hypersymplectic action of $G\times G$, given by allowing gauge transformations with arbitrary values at $t=0$ and $t=1$. The hypersymplectic moment map $(\mu_I,\mu_S,\mu_T)$ for this action is computed from the calculation in the proof of Proposition \ref{mmaps} and yields
$$ \mu_I(\mathcal T)=(-T_1(0),T_1(1)),\quad   \mu_S(\mathcal T)=(T_2(0),-T_2(1)),\quad      \mu_T(\mathcal T)=(T_3(0),-T_3(1)).
$$
We can now consider hypersymplectic quotients of $\mathcal M$ by any subgroup $K$ of $G\times G$, to be denoted by $\mathcal{N}$. If we decompose $\gf\oplus \gf$ orthogonally with respect to the $\Ad$-invariant metric as $\kf\oplus \mf$, where $\kf={\rm Lie}(K)$, then the hypersymplectic quotient is the moduli space of solutions to the Nahm--Schmid equations on $[0,1]$, such that $(T_i(0),T_i(1))\in \mf$, $i=1,2,3$, modulo gauge transformations such that $(g(0),g(1))\in K$. These moduli spaces can, of course, be singular, and even if they are smooth, their hypersymplectic structure may degenerate at some points.
\par
An interesting example is the case $G={\rm U}(n)$, $K={\rm O}(n)\times {\rm O}(n)$ (or $G={\rm SU}(n)$, $K={\rm SO}(n)\times {\rm SO}(n)$). The moduli space $\mathcal N$ consists of solutions such that  $T_i(0)$ and $T_i(1)$ are symmetric for $i\geq 1$, modulo orthogonal gauge transformations. This means that, at $t=0,1$, the quadratic matrix polynomial $T(\zeta)$ considered in \S\ref{spectral} is a symmetric matrix for each $\zeta$. 
It is not hard to see (cf. \cite[Prop. 1.4]{Biel}) that, under the correspondence between $T(\zeta)$ and semistable sheaves $F$ on the spectral curve considered in  \S\ref{spectral}, such symmetric matrix polynomials correspond to theta characteristics, i.e. the sheaf $E=F(-1)$ satisfies $E^2\simeq \omega_{\Sigma}$, where $ \omega_{\Sigma}$ is the dualising sheaf of $ S$ (here we assume that $F$ is an invertible sheaf on $\Sigma$, i.e. that the dimension of any eigenspace of $T(\zeta)$, $\zeta\in \mathbb{CP}^1$, is equal to $1$). Thus, solutions to the Nahm--Schmid equations in $\mathcal N$ correspond to flows between two theta characteristics in the direction $L^t$, where $L$ is the line bundle on ${\rm T}\mathbb{CP}^1$ with transition function $\exp(\eta/\zeta)$. 
\par
Denote by $F_t$ the sheaf corresponding to $T(\zeta)(t)$. Thus $F_1=L\otimes F_0$. Moreover both $F_0(-1)$ and $F_1(-1)$ are theta characteristics, i.e.\ they both square to $ \omega_{\Sigma}$. It follows that ${L^2}_{|\Sigma}\simeq \mathcal O$. This condition is well known to hold for spectral curves of ${\rm SU}(2)$-monopoles and it is therefore tempting to consider $\mathcal N$ as a hypersymplectic analogue of the hyperk\"ahler monopole moduli space. To see  that such an interpretation requires caution, consider $\mathcal N$ for $G={\rm SU}(2)$. As we have seen in \S\ref{explicit}, modulo rotations in ${\rm SO}(1,2)$, solutions to the reduced Nahm--Schmid equations can be written as $T_i(t)=f_i(t)e_i$, where the $f_i$ are given by Jacobi elliptic functions. With the $e_i$ given by \eqref{basis}, such a solution is symmetric at $t$ if and only if $f_2(t)=0$. Thus the condition that such a solution belongs to $\mathcal N$ is equivalent to $f_2(0)=f_2(1)=0$. Proposition \ref{TD} impl
 ies then, however, 
 that $\mathcal T$ belongs to the degeneracy locus. Thus, for $G={\rm SU}(2)$, the hypersymplectic structure is degenerate at {\em every} point of $\mathcal N$.

\end{document}